\numberwithin{equation}{section}
\newtheorem{theorem}{Theorem}[section]
\newtheorem{corollary}[theorem]{Corollary}
\newtheorem{lemma}[theorem]{Lemma}
\newtheorem{proposition}[theorem]{Proposition}
\theoremstyle{definition}
\newtheorem{definition}[theorem]{Definition}
\newtheorem{remark}[theorem]{Remark}
\newtheorem{assumption}[theorem]{Assumption}
\newcommand{\RR}{\mathbb{R}}
\newcommand{\KK}{\mathbb{K}}
\newcommand{\PP}{\mathbb{P}}
\newcommand{\ak}{\mathfrak{a}}
\newcommand{\bk}{\mathfrak{b}}
\newcommand{\ck}{\mathfrak{c}}
\newcommand{\D}{\mathrm{d}}
\newcommand{\Aa}{\mathcal{A}}
\newcommand{\Bb}{\mathcal{B}}
\newcommand{\Cc}{\mathcal{C}}
\newcommand{\Ee}{\mathcal{E}}
\newcommand{\Jj}{\mathcal{J}}
\newcommand{\Ss}{\mathcal{S}}
\newcommand{\Tt}{\mathcal{T}}
\newcommand{\Xx}{\mathcal{X}}
\newcommand{\Fh}{F^{H}}
\newcommand{\II}{\mathtt{I}}
\newcommand{\E}{\mathrm{e}}
\newcommand{\W}{\mathrm{W}}
\newcommand{\X}{\mathrm{X}}
\newcommand{\Y}{\mathrm{Y}}
\newcommand{\Z}{\mathrm{Z}}
\newcommand{\xx}{\mathrm{x}}
\newcommand{\yy}{\mathrm{y}}
\newcommand{\zz}{\mathrm{z}}
\newcommand{\BV}{\mathrm{BV}}
\newcommand{\LTt}{\mathrm{L}^2(\Tt)}
\newcommand{\Hh}{\mathcal{H}}
\newcommand{\Nn}{\mathcal{N}}
\newcommand{\Ff}{\mathcal{F}}
\newcommand{\Oo}{\mathcal{O}}
\newcommand{\eps}{\varepsilon}
\newcommand{\Hm}{H_{-}}
\newcommand{\Hp}{H_{+}}
\newcommand{\Abt}{\mathcal{A}_{b}^{\Theta}}
\newcommand{\supp}{\mathrm{supp}}
\newcommand{\LDP}{\mathrm{LDP}}
\numberwithin{equation}{section}
\numberwithin{equation}{section}
\numberwithin{equation}{section}
\begin{document}
\title{Asymptotic behaviour of randomised fractional volatility models}

\author{Blanka Horvath}
\address{Department of Mathematics, King's College London }
\email{blanka.horvath@kcl.ac.uk}

\author{Antoine Jacquier}
\address{Department of Mathematics, Imperial College London}
\email{a.jacquier@imperial.ac.uk}

\author{Chlo\'e Lacombe}
\address{Department of Mathematics, Imperial College London}
\email{chloe.lacombe14@imperial.ac.uk}

\date{\today}
\thanks{The authors are grateful to the anonymous referees for their useful comments.
BH acknowledges financial support from the SNSF Early Postdoc.Mobility grant 165248.}
\keywords{Rough volatility, large deviations, implied volatility asymptotics}
\subjclass[2010]{41A60, 60F10, 60G15, 60G22}
\maketitle

\begin{abstract}
We study the asymptotic behaviour of a class of small-noise diffusions driven by fractional Brownian motion, 
with random starting points.
Different scalings allow for different asymptotic properties of the process 
(small-time and tail behaviours in particular).
In order to do so, we extend some results on sample path large deviations for such diffusions.
As an application, we show how these results characterise the small-time and tail estimates
of the implied volatility for rough volatility models, recently proposed in mathematical finance.
\end{abstract}

\section{Introduction}
Large deviations are used extensively in Physics (thermodynamics, statistical mechanics) as well as in Mathematics (information theory, stochastic analysis, mathematical finance) 
to estimate the exponential decay of probability measures of rare events. 
Varadhan~\cite{Varadhan}, Schilder~\cite{Schilder},  Freidlin and Wentzell~\cite{FWBook} proved, 
in different degrees of generality, large deviations principles (in~$\RR^n$ and on path space) 
for solutions of stochastic differential equations with small noise,
and the monographs by Dembo and Zeitouni~\cite{DZ} and Deuschel-Stroock~\cite{DBookS} 
provide a precise account of those advances (at least up to the mid-1990s).
In the past decade, this set of techniques and results has been adopted by the mathematical finance community:
finite-dimensional large deviations (in the sense of G\"artner-Ellis) have been used 
to prove small-and large-time asymptotics of implied volatility in affine models ~\cite{FJL, JKM},
sample-path LDP (\`a la Freidlin-Wentzell~\cite{FWBook}) have proved efficient to determine importance sampling changes of probability~\cite{GT16,GR08, Rob10},
and heat kernel expansions (following Ben Arous~\cite{Benarous} and Bismut~\cite{B84}),
have led to a general understanding of small-time and tail behaviour of 
multi-dimensional diffusions~\cite{BO1, BO2, DFJV1, DFJV2}.
These asymptotics have overall provided a deeper understanding of the behaviour of models,
and, ultimately, allow for better calibration of real data;
a general overview can be found in~\cite{Pham}.

Motivated by financial applications, we derive here asymptotic small-time and tail behaviours
of the solution to a generalised version of the Stein-Stein stochastic volatility model, 
originally proposed in~\cite{SZ99,SS}. 
We in particular consider two important (in light of the recent trends in the literature proposed models) extensions:
(i) the SDE driving the instantaneous volatility process is started from a random distribution;
this so-called `randomised' type of models was recently proposed in~\cite{JR16, JS17, M15}, 
in particular to understand the behaviour of the so-called `forward volatility';
(ii) the volatility process is driven by a fractional Brownian motion.
Fractional stochastic volatility models, originally proposed by Comte and Renault~\cite{ComteRenault, ComteCR} with $H\in (1/2,1)$, have recently been extended to the case $H \in (0,1/2)$,
and a recent flourishing activity in this area~\cite{ALV07, BFG16, Fukasawa, GJR14, JMM17}
has established these models as the go-to standards for estimation and calibration.

The original motivation behind randomisation of the initial starting point is rooted in financial practice, where only the initial value of the stock price process is observed directly and the instantaneous value of volatility is subject to calibration. 
The effect of randomisation of the initial volatility on the implied volatility surface was explored by Jacquier and Roome~\cite{JR15} in a simple `random environment' setting, where the volatility component was assumed to follow CEV dynamics. 
Their results give an impetus both on the theoretical and the practical level: they solve a practical modelling problem in a simple tractable setting and at the same time raise awareness for the potential prowess of applying random evolution equations for financial modelling. 
In this paper we follow up on this direction and blend more involved approaches (proposed by~\cite{M00, MNS}) from the literature around random environment and random evolution equations into our financial model, 
where randomness also appears in the drift and diffusion coefficients of the process.
On the practical level, independently from the results of~\cite{JR15}, 
Mechkov~\cite{M15} goes a step further in endorsing the idea of randomising the initial volatility and makes a strong case to move away from modelling hidden variables (such as stochastic volatility) in the traditional way. 
He argues that starting the volatility from a fixed starting point heavily underestimates the effect of the hidden variable on the slope of the implied volatility smile, 
and therefore `hot start' volatility models (with random starting point) significantly outperform traditional ones altogether. 
Indeed, both randomised models~\cite{JR15,M15} produce the desired explosion in the smile at short maturities. 
Jacquier and Shi~\cite{JS17} develop this further by providing a precise link between the rate of explosion of implied volatilities on the short end and the tail distribution of the initial distribution of the volatility process in a `randomised' Heston model. 
These outputs confirm that stochastic volatility models with random starting point constitute a class of counterexamples to the long-standing belief formulated by Gatheral~\cite[Chapter 5]{Gat06}, 
that jumps in the stock price process are needed to produce steep short-dated implied volatility skews. 
Another example of broadly different design was provided by Caravenna and Corbetta~\cite{CC17}. 
In their `multiscaling' model, the stock price process is continuous, 
while the volatility process has (carefully designed) jumps, 
and steepness of the smile is achieved with a heavy-tail distribution of the small-time distribution of the volatility.
Rough fractional volatility models (with continuous volatility paths) have recently been proposed, 
and are able to capture the volatility skew \cite{ALV07,BFG16, BFGHS17, EFR16, FZ16, Fukasawa, GJR14, GuliViens, JPS17}.
In this paper we analyse the combined effect of a rough fractional Brownian driver 
(with Hurst parameter $H\in (0,1)$) in the volatility and a random starting point. 
We quantify how the tail behaviour (parametrised by a scaling coefficient $b>0$) of the random starting point modulates the rate of explosion in the implied volatility in the presence of rough fractional volatility. 
Finally, in a specific simplified setting we highlight how our model blends naturally into the setting of forward-start options in stochastic volatility models, 
whose asymptotic properties have been studied in~\cite{JR16}.
In proving our results, we improve the large deviations literature on both SDEs with random starting points and fractional SDEs.

In Section~\ref{Sec:Notations}, we recall some concepts that will be used in the paper and set the notations.
We also introduce the model~\eqref{eq:model}, and the main assumptions on its dynamics and on the initial random starting point.
Section~\ref{Sec:Mainresults} collects the main large deviations estimates in different regimes: 
tail behaviour (Section~\ref{Sec:Tails}), and small-time behaviour (Section~\ref{Sec:Smalltime}). 
In each case, we present two different scenarios consisting of an appropriately rescaled fractional model (Theorems~\ref{Th:tails_frac_X} and~\ref{Th:LDP_ST_X}) 
and a simplified diffusive model (Theorems~\ref{thm:ext_MNSLDP} and~\ref{Th_rdm_new}) 
with more restrictive conditions on the random starting point (allowing for simpler large deviations rate functions). 
Section~\ref{sec:applications} displays applications to implied volatility asymptotics (Corollaries~\ref{Cor:ImpliedvolTails} and~\ref{Cor:ImpliedvolST}), 
and presents an application to forward-start options. 
Proofs can be found in Appendix.

\section{Set up and notations}\label{Sec:Notations}
As outlined in the introduction, we prove pathwise large deviations 
for a two-dimensional system generalising the Stein-Stein model~\cite{SZ99, SS}, with random initial datum.
In particular, via suitable rescaling, we determine the small-time and the large-tail behaviours of the system.
Before delving into the core of the paper, let us recall some useful facts about large deviations and Gaussian processes, which shall also serve as setting the notations for the rest of the paper. 
Unless otherwise stated, we always work on a finite time horizon, 
say $[0,1]$ without loss of generality, which we denote by~$\Tt$, and we write $\Tt^*:=\Tt\setminus\{0\}$.
We let $\Cc:=\Cc(\Tt, \RR)$ be the space of continuous functions from~$\Tt$ to~$\RR$
and $\Cc_b^2$ the space of twice differentiable functions on~$\Tt$ 
with bounded partial derivatives up to the second order.
We write $X^\eps\sim\LDP(h_\eps, I)$ when the sequence~$(X^\eps)_{\eps>0}$
satisfies a large deviations principle (Definition~\ref{def:LDP}) on~$\Cc$, 
as~$\eps$ tends to zero with good rate function~$I$ and speed~$h_{\eps}$, where~$h_{\eps}$ denotes a function satisfying $\lim_{\eps \downarrow 0} h_{\eps}=0$.
For a random variable~$X$, we denote by $\supp(X)$ its support.

\subsection{Large deviations and fractional Brownian motion}
We use~\cite{DZ} as our guide through large deviations.
Given a topological space~$\Xx$ and the completed Borel $\sigma$-field~$\Bb_{\Xx}$ corresponding to~$\Xx$, for any $A \in \Bb_{\Xx}$, we denote by~$\mathring{A}$ and~$\overline{A}$
respectively its interior and closure, and consider a sequence~$(X^\eps)_{\eps>0}$ on $(\Xx, \Bb_{\Xx})$. 

\begin{definition}
A (good) rate function is a lower semi-continuous mapping $I: \Xx \rightarrow [0, \infty]$ 
such that the level sets $\left\{ x: I(x) \le z \right\}$ are closed (compact) subsets of~$\Xx$ for any $z\geq 0$.
\end{definition}

\begin{definition}\label{def:LDP}
The sequence $(X^{\eps})_{\eps > 0}$ satisfies a large deviations principle (LDP)
on $(\Xx, \Bb_{\Xx})$ 
as~$\eps$ tends to zero,
with speed~$h_\eps$, and rate function~$I$, if for any Borel subset $A \subset \mathcal{X}$, 
the following inequalities hold:
\begin{equation}
-\inf_{A^o} \ I(\varphi) 
\le \liminf_{\eps \downarrow 0} h_\eps \log \mathbb{P} (X^{\eps} \in A) 
\le \limsup_{\eps \downarrow 0} h_\eps \log \mathbb{P} (X^{\eps} \in A)
\le - \inf_{\overline{A}} I(\varphi).
\end{equation}
\end{definition}
A particularly convenient tool to prove large deviations is the so-called exponential equivalence, 
which we recall from~\cite[Definition 4.2.10]{DZ} as follows:
\begin{definition}\label{def:ExpEquiv}
On a metric space $(\mathcal{Y},d)$, two $\mathcal{Y}$-valued sequences ${(X^{\eps})}_{\eps > 0}$ and 
${(\widetilde{X}^{\eps})}_{\eps > 0}$ are called exponentially equivalent 
(with speed~$h_\eps$) if there exist probability spaces $(\Omega, \Bb_\eps, \PP_\eps)_{\eps>0}$ such that
for any $\eps>0$, $\PP^{\eps}$ is the joint law and, 
for each $\delta >0$, the set $\left\{ \omega: (\widetilde{X}^{\eps},X^{\eps}) \in \Gamma_{\delta} \right\}$ is $\Bb_{\eps}$-measurable, and
\begin{equation*}
\limsup_{\eps \downarrow 0} h_\eps \log \PP^{\eps}\left(\Gamma_\delta\right)=- \infty,
\end{equation*}
where $\Gamma_\delta := \left\{ (\tilde{y},y): d(\tilde{y},y) > \delta \right\} \subset \mathcal{Y}\times\mathcal{Y}$.
\end{definition}
\begin{theorem}
Let ${(X^{\eps})}_{\eps > 0}$ and ${(\widetilde{X}^{\eps})}_{\eps > 0}$ be two 
exponentially equivalent sequences (with speed~$h_\eps$) on some metric space.
If $(X^{\eps})\sim\LDP(h_\eps, \Lambda^X)$ for some good rate function~$\Lambda^X$, 
then $(\widetilde{X}^{\eps})\sim\LDP(h_\eps, \Lambda^X)$.
\end{theorem}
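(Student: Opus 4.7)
The plan is to establish the large deviations upper and lower bounds for $(\widetilde{X}^{\eps})$ separately, transferring each from the LDP of $(X^{\eps})$ via the exponential equivalence. The overall strategy rests on two elementary facts: the set-theoretic inclusion that links $\widetilde{X}^{\eps}$ to a $\delta$-fattened event for $X^{\eps}$ plus the ``distance'' event, and the standard inequality $\limsup_{\eps\downarrow 0} h_\eps \log(a_\eps + b_\eps) \le \max\{\limsup h_\eps \log a_\eps, \limsup h_\eps \log b_\eps\}$, which converts unions into maxima at the level of LDP speeds.

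For the upper bound, fix a closed set $F \subset \Xx$ and $\delta>0$, and denote by $F^{\delta}:=\{y:\, d(y,F)\le\delta\}$ its closed $\delta$-enlargement. The inclusion $\{\widetilde{X}^{\eps}\in F\}\subset \{X^{\eps}\in F^{\delta}\}\cup\{d(\widetilde{X}^{\eps},X^{\eps})>\delta\}$ gives, after taking logarithms, multiplying by $h_\eps$ and using the above max-inequality,
\begin{equation*}
\limsup_{\eps\downarrow 0} h_\eps\log\PP(\widetilde{X}^{\eps}\in F) \le \max\left\{-\inf_{F^{\delta}}\Lambda^X,\; \limsup_{\eps\downarrow 0} h_\eps\log\PP^{\eps}(d(\widetilde{X}^{\eps},X^{\eps})>\delta)\right\} = -\inf_{F^{\delta}}\Lambda^X,
\end{equation*}
since the second term equals $-\infty$ by exponential equivalence. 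Letting $\delta\downarrow 0$ and invoking the standard good-rate-function argument (if $\inf_{F^{\delta}}\Lambda^X$ stays bounded, one extracts minimisers from the compact level sets of $\Lambda^X$ and uses lower semicontinuity together with closedness of $F$) yields $\inf_{F^{\delta}}\Lambda^X\uparrow\inf_{F}\Lambda^X$, which gives the required upper bound.

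For the lower bound, fix an open set $G\subset\Xx$ and any $\varphi\in G$ with $\Lambda^X(\varphi)<\infty$ (otherwise there is nothing to prove). Pick $\delta>0$ small enough so that the open ball $B(\varphi,2\delta)$ lies inside $G$, and use the complementary inclusion $\{X^{\eps}\in B(\varphi,\delta)\}\subset\{\widetilde{X}^{\eps}\in G\}\cup\{d(\widetilde{X}^{\eps},X^{\eps})>\delta\}$ to write
\begin{equation*}
\PP(\widetilde{X}^{\eps}\in G) \;\ge\; \PP(X^{\eps}\in B(\varphi,\delta)) - \PP^{\eps}(d(\widetilde{X}^{\eps},X^{\eps})>\delta).
\end{equation*}
The LDP lower bound for $(X^{\eps})$ gives $\liminf h_\eps\log\PP(X^{\eps}\in B(\varphi,\delta))\ge -\Lambda^X(\varphi)>-\infty$, while exponential equivalence gives $h_\eps\log\PP^{\eps}(d>\delta)\to -\infty$, so for all $\eps$ small enough the distance term is at most half of $\PP(X^{\eps}\in B(\varphi,\delta))$; this yields $\liminf h_\eps\log\PP(\widetilde{X}^{\eps}\in G)\ge -\Lambda^X(\varphi)$. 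Taking the infimum over $\varphi\in G$ completes the lower bound.

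The routine but slightly delicate step is the handling of the difference of probabilities in the lower bound: one must argue that the exponentially negligible ``distance'' term cannot swamp the genuine probability of being near $\varphi$, which is the place where $\Lambda^X(\varphi)<\infty$ enters. Apart from this, the only other nontrivial ingredient is the limit $\inf_{F^{\delta}}\Lambda^X\uparrow\inf_{F}\Lambda^X$ as $\delta\downarrow 0$, which is where the goodness (compactness of level sets) of $\Lambda^X$ is used, and which then transfers to $(\widetilde{X}^{\eps})$ to show that $\Lambda^X$ is itself a good rate function for the transferred LDP.
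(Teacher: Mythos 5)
The paper does not prove this theorem; it is quoted directly from Dembo and Zeitouni (it is their Theorem 4.2.13, with the exponential equivalence Definition 4.2.10 cited immediately above). Your proof is essentially the standard one given there: the inclusion $\{\widetilde{X}^{\eps}\in F\}\subset\{X^{\eps}\in F^{\delta}\}\cup\{d(\widetilde{X}^{\eps},X^{\eps})>\delta\}$ plus the max-inequality for the upper bound, the mirror inclusion for the lower bound, and the compactness/lower semicontinuity argument to send $\delta\downarrow 0$. All steps are correct. Two very small remarks. First, in the lower bound the quantifier reasoning should be made explicit: from $\liminf_\eps h_\eps\log\PP(X^\eps\in B(\varphi,\delta))\ge-\Lambda^X(\varphi)$, for any $\eta>0$ one has $\PP(X^\eps\in B(\varphi,\delta))\ge\exp(-(\Lambda^X(\varphi)+\eta)/h_\eps)$ for all small $\eps$, while the distance term is eventually dominated by $\exp(-M/h_\eps)$ for any $M$; choosing $M>\Lambda^X(\varphi)+\eta$ makes the subtraction harmless, then $h_\eps\log(1/2)\to0$ and $\eta\downarrow0$ finishes it. Second, your closing sentence about goodness ``transferring'' to the new LDP is slightly misleading: goodness is a property of the function $\Lambda^X$ alone, which is unchanged, so nothing needs to transfer; the place goodness is actually used is precisely the step $\inf_{F^\delta}\Lambda^X\uparrow\inf_F\Lambda^X$, which you correctly identified.
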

The last tool we shall need repeatedly is the following Contraction Principle~\cite[Theorem 4.2.1]{DZ}:
\begin{theorem}[Contraction Principle]
Let $\mathcal{X}$, $\mathcal{Y}$ two topological spaces, $f: \mathcal{X} \rightarrow \mathcal{Y}$ a continuous function and $I: \mathcal{X} \rightarrow [0,\infty]$ a good rate function. 
For each $y\in\mathcal{Y}$, define $I'(y) := \inf \{I(x): x\in\mathcal{X}, y=f(x)\}$. Then, if $I$ controls the LDP associated with a family of probability measures $\{\mu_\eps\}$ on $\mathcal{X}$, 
then~$I'$ controls the LDP associated with the family of probability measures $\{\mu_\eps \circ f^{-1} \}$ on $\mathcal{Y}$ and $I'$ is a good rate function on $\mathcal{Y}$.
\end{theorem}

On a real, separable Banach space $(\Ee, \|\cdot\|)$,
we denote by $\Bb$ the associated Borel sigma field. 
Letting~$\Ee^*$ denote the topological dual of~$\Ee$, we define a Gaussian measure as follows:
\begin{definition}
A Gaussian measure~$\mu$ on $(\Ee, \|\cdot\|)$  is such that every $\vartheta^* \in\Ee^*$, 
when viewed as a random variable via the dual pairing
$\vartheta\mapsto\langle \vartheta^*, \vartheta\rangle_{\Ee^*\Ee}$,
is a real Gaussian random variable on $(\Ee, \Bb, \mu)$.	
\end{definition}
We associate a Gaussian process to a Gaussian measure in the usual way~\cite[Section 3.2]{CT06}.
Particular examples of Gaussian processes, crucial for the rest of the paper,
include standard Brownian motion on the time interval~$\Tt$,
where $\Ee=\Cc$ equipped with the supremum norm and with the topology of uniform convergence,
and~$\Ee^*$ is the space of signed measures on~$\Tt$.
In fact, this construction applies to all (centered) continuous Gaussian processes, 
which are uniquely characterised by their covariance operator.
A fractional Brownian motion $W^H$ with Hurst parameter $H \in (0,1)$ 
is such a Gaussian process, starting from zero, 
with covariance
$$
\left\langle W^H_t, W^H_s\right\rangle = \frac{1}{2} \left( {|t|}^{2H} + {|s|}^{2H} - {|t-s|}^{2H}\right), \qquad \textrm{for any } s,t \in \Tt.
$$
Of primary importance in understanding small-noise behaviours of Gaussian systems 
is the concept of reproducing kernel Hilbert spaces (RKHS), 
which we recall following~\cite[Definition 3.3]{CT06}:

\begin{definition}
Let~$\mu$ be a Gaussian measure on~$\Ee$ and define the map
$\mathcal{R} : \Ee^* \rightarrow \Ee$ by $\mathcal{R}x^* := \int_\Ee \langle x^*, x \rangle x \mu(\D x)$.
The RKHS~$\mathcal{H}_{\mu}$ of~$\mu$ is the completion of the image $\mathcal{R}\Ee^*$ for the norm
$\|\mathcal{R}x^*\|_{\mathcal{H}_{\mu}} := \left(\langle x^*, \mathcal{R}x^*\rangle\right)^{1/2}$,
for all $x^*\in\Ee^*$.
\end{definition}
To characterise the RKHS of fractional Brownian motion,
the usual tool is its Volterra representation~\cite{NVV99}
\begin{equation}\label{eq:VolterraW}
W^H_t = \int_0^t K^H(t,s) \D B_s,
\end{equation}
which holds almost surely for all $t\in\Tt$, 
where~$B$ is a standard Brownian motion generating the same filtration as~$W^H$,
and $K^H$ is the Volterra kernel defined, for any $s, t \in \Tt$ with $0<s<t$,
by~\cite[Theorem 5.2]{NVV99}
\begin{equation}\label{eq:VolterraK}
K^H(t,s) = 
\left\{
\begin{array}{ll}
\displaystyle \frac{\kappa_H}{s^{\Hm}}
\left[ \left(t(t-s)\right)^{\Hm} - \Hm
\int_{s}^{t} \frac{(u-s)^{\Hm}}{u^{1-\Hm}} \D u\right], & \displaystyle \text{if }H<\frac{1}{2},\\
\displaystyle \frac{\kappa_H \Hm}{s^{\Hm}}
\int_{s}^{t}\frac{u^{\Hm}\D u}{|u-s|^{1-\Hm}}, & \displaystyle \text{if }H>\frac{1}{2},\\
1, & \displaystyle \text{if }H=\frac{1}{2},\end{array}
\right.
\end{equation}
with $H_{\pm} := H\pm\frac{1}{2}$
and
$\displaystyle
\kappa_H := {\left(\frac{2H \Gamma(1-\Hm)}{\Gamma(\Hp)\Gamma(2-2H)}\right)}^{1/2}$.
For $t\in\Tt^*$, the map $K^H(t,\cdot)$ is square integrable around the origin,
and the reproducing kernel Hilbert space of the fractional Brownian motion is given by
$$
\Hh_{K^H} := \left\{ \int_{0}^{t} K^H(t,s) f(s) \D s, t\in\Tt : f \in \LTt\right\}
$$
with inner product
$$
\left\langle \int_{0}^{\cdot} K^H(\cdot,s) f_1(s) \D s, \int_{0}^{\cdot} K^H(\cdot,s) f_2(s) \D s \right\rangle_{\Hh_{K^H}} := {\langle f_1,f_2 \rangle }_{\LTt}.
$$
The notation~$\Hh_{K^H}$, emphasising the link with the underlying kernel, 
will be useful later (in Definition~\ref{def:RKHS}) for more general kernels.
In particular, the RKHS associated to (standard) Brownian motion ($H=1/2$)
is the Cameron-Martin space, and corresponds to the space of absolutely continuous functions starting at zero, 
with square integrable derivatives. 
In other words, for any $H \in (0,1)\setminus\{1/2\}$, 
the identity $\Hh_{K^{H}}=\KK_{K^H}\LTt$
holds, where $\KK_{K^H}: \LTt\ni f \mapsto \int_{0}^{t} K^H(t,s) f(s) \D s$.
This characterisation motivates the following definition:
\begin{definition}\label{def:RKHS}
For any strictly positive function~$\Phi:\RR_+^2\to\RR$ such that $\Phi(t, \cdot) \in L^2(\Tt)$
for any $t\in\Tt$, 
the corresponding RKHS is defined as
\begin{align}\label{eq:RKHSgeneral}
\Hh_{\Phi} := \left\{ \int_{0}^{t} \Phi(t,s) f(s) \D s, t\in\Tt:  f \in \LTt\right\},
\end{align}
with inner product
$$
\left\langle \int_{0}^{\cdot} \Phi(\cdot,s) f_1(s) \D s, \int_{0}^{\cdot} \Phi(\cdot,s) f_2(s) \D s \right\rangle_{\Hh_\Phi} := {\langle f_1,f_2 \rangle }_{\LTt}.
$$
\end{definition}

Reproducing Kernel Hilbert Spaces, together with their inner products, 
turn out to provide the right spaces to characterise large deviations rate functions.
In particular, for a given Gaussian Volterra process of the form 
$\int_{0}^{\cdot}\Phi(\cdot,s)\D B_s$, for some Volterra kernel~$\Phi$, 
it follows from~\cite[Theorem 3.4.5]{DBookS} 
that the sequence
$(\eps \int_{0}^{\cdot}\Phi(\cdot,s)\D B_s)_{\eps>0}$
satisfies a large deviations principle with speed~$\eps^2$ and rate function 
\begin{equation}\label{eq:LDPVolterrageneral}
\Lambda_{\Phi}(\varphi) = \left\{
\begin{array}{ll}
\displaystyle \frac{1}{2}\|\dot{\varphi}\|_{\Hh_{\Phi}}, & \text{if }\varphi \in \Hh_{\Phi},\\
+ \infty, & \text{otherwise}.
\end{array}
\right.
\end{equation}
An obviously special role is played by the standard Brownian motion $H=\frac{1}{2}$, 
and we shall adopt the simplified notation~$\Hh$ (the classical Cameron-Martin space) 
and~$\Lambda$ in place of $\Hh_{K^{1/2}}$ and $\Lambda_{K^{1/2}}$.

\subsection{Setting and assumptions}
The particular system we are interested in is 
\begin{equation}\label{eq:model}
\left\{
\begin{array}{ll}
\displaystyle
\D X_t = - \frac{1}{2}\sigma(Y_t)^2\D t + \sigma(Y_t) \left(\rho\D B_t + \overline{\rho} \D B^{\perp}_t\right), 
 & X_0 = 0, \\
\D Y_t = (\lambda + \beta Y_t) \D t + \xi \D W^H_t, & Y_0 \sim \Theta,
\end{array}
\right.
\end{equation}
where $W^H$ is a fractional Brownian motion, with Hurst parameter $H \in (0,1)$,
$(B,B^\perp)$ is a two-dimensional standard Brownian motion, 
$\beta<0$, $\lambda, \xi>0$, $\rho \in (-1,1)$, $\overline{\rho} := \sqrt{1-\rho^2}$,
and~$\Theta$ is a square-integrable continuous random variable.
In order to guarantee existence and uniqueness of a strong solution, 
we further assume~\cite[Theorem 3.1.3]{M08}
 that~$\sigma$ is Lipschitz continuous, 
 satisfies the growth condition $|\sigma(y)| \le C(1+|y|)$ for $y \in \RR$,
 and is differentiable with locally H\"older continuous derivative. 
In order to prove our main results below, we make the following technical assumption:

\noindent \textbf{Assumption $\Aa$}:
There exist a measurable function $\widetilde{\sigma}:\RR\to\RR$,
with locally H\"older continuous derivative, such that the scaling property $\lim_{\eps \downarrow 0} \eps \sigma(y/\eps) = \widetilde{\sigma}(y)$ holds true uniformly on~$\RR$.
\begin{remark}
Since the map~$\sigma$ is Lipschitz continuous with linear growth, 
then so is~$\widetilde{\sigma}$. 
The scaling property further implies that $\lim\limits_{\eps \downarrow 0} \eps^b \sigma(y/\eps^b) = \widetilde{\sigma}(y)$ uniformly on~$\RR$
for any $b>0$.
This will be useful for later computations.
\end{remark}

\section{Main results}\label{Sec:Mainresults}
Centrepiece of our analysis are large deviations estimates for suitably rescaled versions of~\eqref{eq:model}. 
The first rescaling (presented in Section~\ref{Sec:Tails}) is tailored 
to the analysis of the tail behaviour of~\eqref{eq:model}, 
while the second rescaling (Section~\ref{Sec:Smalltime}) is bespoke to its short-time asymptotic properties. 
In addition to these asymptotic results in the general fractional case (Theorems~\ref{Th:tails_frac_X} and~\ref{Th:LDP_ST_X}), 
we present two special simplified diffusive cases, 
where particularly tractable rate functions can be obtained (Theorems~\ref{thm:ext_MNSLDP} 
and~\ref{Th_rdm_new} respectively). 
For this we impose stronger conditions (Assumption~\ref{ass:MNS}) on the random starting point. 
This allows us to establish, following~\cite{MNS}  in Section~\ref{app:MNS}, an exponential equivalence 
between~\eqref{eq:model} and an analogous process with fixed starting point. 
In Section~\ref{Sec:Mellouk} we construct a third
rescaling~\eqref{eq:SystemEps} inspired by Mellouk~\cite{M00} in the short-time diffusive case under the assumption that the support of the random starting point is bounded. 
We shall work with rescaled versions $Y^\eps$ of the process~$Y$ in~~\eqref{eq:model}
(see~\eqref{eq:fracSSRandom} and~\eqref{eq:SmalltimeModel} for specific examples), together with a function~$h_\eps$ 
describing the speed of the large deviations estimates, for which we introduce the following assumptions:

\noindent \textbf{Assumption $\Aa'$}:
There exists a family of continuous functions $(\sigma_n)_{n \ge 0}$ on $\RR$ such that
\begin{itemize}
\item[(i)] $(\sigma_n)_{n\geq 0}$ converges uniformly to $\widetilde{\sigma}$ on~$\RR$;
\item[(ii)] for all $\delta >0$,
$\lim\limits_{n \uparrow \infty} \limsup\limits_{\eps \downarrow 0} h_{\eps} \log \PP \left(|\sigma_n(Y^\eps) - \widetilde{\sigma}(Y^\eps) \ge \delta| \right) = -\infty$.
\end{itemize}

\noindent \textbf{Assumption $\Abt$} (tail behaviour of~$\Theta$):
The limit
$\limsup\limits_{\eps \downarrow 0} h_\eps \log\PP(\eps^b |\Theta|>1) = -\infty$ holds.\\
In Assumptions $\Aa'$ and $\Abt$ above, the large deviations speed~$h_\eps$ 
takes the value~$\eps^{2b}$ in Section~\ref{Sec:Tails} 
and~$\eps^{4H+2b}$ in Section~\ref{Sec:Smalltime} respectively.
The constant~$b$ (which may vary below) 
plays an essential role in subsequent large deviations estimates,
via exponential equivalence techniques (Definition~\ref{def:ExpEquiv}).
Assumptions~$\Aa$ and~$\Aa'$ are naturally satisfied in the fractional Stein-Stein case 
(where $\sigma(y)\equiv y$), but imposing them allow us to state our results for more general~$\sigma$, 
in particular when using extended Contraction Principles~\cite[Proposition~2.3]{MNP92}.

 \subsection{Tail behaviour}\label{Sec:Tails}
\subsubsection{The general case}
For $b,\eps>0$, introduce the rescaling 
$(X^{\eps}, Y^{\eps}) := (\eps^{2b}X, \eps^b Y)$, so that~\eqref{eq:model} becomes
\begin{equation}\label{eq:fracSSRandom}
\left\{
\begin{array}{ll}
\displaystyle
\D X^{\eps}_t = - \frac{\eps^{2b}}{2} \sigma\left(\frac{Y^{\eps}_t}{\eps^b}\right)^2 \D t + \eps^{2b} \sigma\left(\frac{Y^{\eps}_t}{\eps^b}\right) (\rho\D B_t + \overline{\rho} \D B^{\perp}_t), &X^{\eps}_0=0, \\
\D Y^{\eps}_t = \left( \eps^b \lambda + \beta Y^{\eps}_t \right) \D t + \eps^b \xi \D W^H_t, &Y^{\eps}_0 \sim \eps^b \Theta.
\end{array}
\right.
\end{equation}
The particular rescaling considered here is perfectly suited for tail behaviour,
as large deviations provide estimates for $\PP(X^\eps\geq 1) = \PP(X \geq \eps^{-{2b}})$.
Our main result is as follows, and is proved in Appendix~\ref{Sec:Proof1}:

\begin{theorem}\label{Th:tails_frac_X}
For any $H \in (0,1)$, the following hold:
\begin{enumerate}[(i)]
\item for any $b>0$ such that~$\Abt$ holds, $Y^{\eps} \sim \LDP \left(\eps^{2b}, \Lambda_{\Fh} \right)$,
with~$\Lambda_{\Fh}$ in~\eqref{eq:LDPVolterrageneral} 
and~$\Fh$ in  Lemma~\ref{lem:rep_fBm};
\item 
for any $b \ge \frac{1}{2}$ such that Assumptions~$\Aa$, $\Aa'$, $\Abt$ hold, 
$X^{\eps}\sim\LDP(\eps^{2b}, \widetilde{\Lambda})$, 
with~$\widetilde{\Lambda}$ in~\eqref{eq:grf_X_tails}. 
\end{enumerate}
\end{theorem}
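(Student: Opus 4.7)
For part (i), the plan is to solve the linear SDE for $Y^\eps$ explicitly and then reduce to a Gaussian Volterra LDP, treating the random initial datum by exponential equivalence. Variation of constants applied to the $Y^\eps$-equation in~\eqref{eq:fracSSRandom} yields
\begin{equation*}
Y^\eps_t \;=\; \eps^b \Theta\, \E^{\beta t} \;+\; \frac{\eps^b\lambda}{\beta}\bigl(\E^{\beta t}-1\bigr) \;+\; \eps^b\xi \int_0^t \E^{\beta(t-s)} \D W^H_s.
\end{equation*}
Using the Volterra representation~\eqref{eq:VolterraW} together with Lemma~\ref{lem:rep_fBm}, the stochastic convolution against $W^H$ is rewritten as $\int_0^t \Fh(t,s)\D B_s$ for a suitable Volterra-type kernel $\Fh$. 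A direct application of~\eqref{eq:LDPVolterrageneral} delivers the LDP at speed $\eps^{2b}$ with rate function $\Lambda_{\Fh}$ for this Gaussian piece. The deterministic drift is uniformly $O(\eps^b)$ and thus trivially exponentially negligible, and, since $\beta<0$ gives $|\E^{\beta t}|\le 1$ on $\Tt$, the random starting contribution is exponentially negligible at speed $\eps^{2b}$ as a direct consequence of~$\Abt$. The three pieces then combine by exponential equivalence (Definition~\ref{def:ExpEquiv}).

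For part (ii), the plan is to express $X^\eps$ as a continuous functional of a Gaussian triple that jointly satisfies an LDP, and to read off the rate function by the contraction principle. Rewriting the $X^\eps$-equation as
\begin{equation*}
X^\eps_t \;=\; -\tfrac{1}{2}\int_0^t\bigl[\eps^b\sigma(Y^\eps_s/\eps^b)\bigr]^2\D s \;+\; \int_0^t \bigl[\eps^b\sigma(Y^\eps_s/\eps^b)\bigr]\bigl(\rho\,\eps^b\D B_s+\bar\rho\,\eps^b\D B^\perp_s\bigr),
\end{equation*}
I observe that, by Assumption~$\Aa_b$, the bracketed factor converges uniformly on~$\RR$ to $\tsigma(Y^\eps)$. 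Define the surrogate $\widetilde X^\eps$ by replacing the bracket by $\tsigma(Y^\eps)$ in the above. Using the approximating family $(\sigma_n)$ supplied by~$\Aa_b'$ together with an exponential Burkholder/Garsia--Rodemich--Rumsey-type bound for the stochastic integral, one establishes exponential equivalence between $X^\eps$ and $\widetilde X^\eps$ at speed~$\eps^{2b}$.

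The triple $(Y^\eps,\eps^b B,\eps^b B^\perp)$ satisfies a joint LDP at speed $\eps^{2b}$: each coordinate is a continuous linear image of the common Brownian pair $(\eps^b B,\eps^b B^\perp)$ — the $Y^\eps$-component via the Volterra kernel $\Fh$ as in~(i) — so the standard Schilder theorem applies. Since $\widetilde X^\eps = F(Y^\eps,\eps^b B,\eps^b B^\perp)$ for a functional $F$ built out of Riemann and stochastic integrals, the extended contraction principle of~\cite[Proposition~2.3]{MNP92}, combined with the $\sigma_n$-approximation of~$\Aa_b'$ to ensure continuity of $F$ after suitable truncation, transfers the LDP to~$\widetilde X^\eps$, and hence to~$X^\eps$, with the variational rate function~$\widetilde\Lambda$ of~\eqref{eq:grf_X_tails}.

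The hard part will be the exponential equivalence between $X^\eps$ and $\widetilde X^\eps$: stochastic integrals are not uniformly continuous functionals of the integrand in supremum topology, so the contraction principle cannot be applied naively. This is precisely the role of the approximating family $(\sigma_n)$ in~$\Aa_b'$, which allows one to invoke the exponential approximation theorem of~\cite{DZ} and replace $\sigma$ by smooth surrogates along which pathwise control is feasible, before passing to the limit $n\uparrow\infty$. The condition $b\ge 1/2$ enters here to balance the $\eps^b$-scaling of the Brownian noise against the $\eps^{2b}$ LDP speed: it makes the quadratic-variation contribution of the stochastic integrals subdominant to the LDP exponent and allows for exponential tightness of the $X^\eps$-trajectories in~$\Cc$.
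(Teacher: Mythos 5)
Part~(i) of your proposal is correct and matches the paper's own route: explicit variation-of-constants solution of the Ornstein--Uhlenbeck type SDE, Lemma~\ref{lem:rep_fBm} to put the stochastic convolution in Volterra form, the general Gaussian LDP~\eqref{eq:LDPVolterrageneral}, and exponential equivalence to peel off the deterministic drift and the random starting point via~$\Abt$.

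Part~(ii) contains a genuine gap. You correctly flag that the stochastic integral $\eps^b\int_0^{\cdot}\tsigma(Y^\eps_s)\bigl(\rho\,\D B_s+\bar\rho\,\D B_s^\perp\bigr)$ is not a continuous functional of the driving paths, so the contraction principle cannot be applied naively. But your proposed fix --- the $\sigma_n$-approximation of~$\Aa_b'$ combined with an exponential-approximation argument --- addresses a different obstacle, namely the non-Lipschitzness of the map $v\mapsto\tsigma(\overline{G}(v))$; that is what the extended contraction principle of~\cite[Proposition~2.3]{MNP92} is for, and it gets you only as far as an LDP for the pair $\bigl(\eps^b(\rho B+\bar\rho B^\perp),\tsigma(Y^\eps)\bigr)$. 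To go from there to an LDP for the stochastic integral, the paper invokes Garcia's theorem~\cite[Theorem~1.2]{Gar08} on large deviations of stochastic integrals driven by a uniformly exponentially tight (UET) sequence of semimartingales. This is where the condition $b\ge\tfrac12$ actually enters: it is precisely what makes $(\eps^b W)_{\eps>0}$ UET. Your heuristic that $b\ge\tfrac12$ "balances the $\eps^b$-scaling against the $\eps^{2b}$ speed" is pointing in the right direction, but without identifying UET and the Garcia theorem there is no path from the joint LDP for $(\eps^b W,\tsigma(Y^\eps))$ to the LDP for the integral. Finally, the exponential equivalence between $X^\eps$ and $\widetilde X^\eps$ is in the paper a hands-on estimate using the linear growth of $\sigma$, a conditioning on $\|\int_0^\cdot|Y^\eps_s|^2\D s\|_\infty$, Markov's inequality and the Borell--TIS inequality, together with~$\Abt$; it does not rely on the $\sigma_n$ family at all, so presenting $\sigma_n$ as the tool that makes this step work mislocates its role in the argument.
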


The proof of the theorem, developed later, requires a precise analysis of 
the Reproducing Kernel Hilbert Spaces of the processes under consideration, 
and we first state two key ingredients
(proved in Appendices~\ref{App:Lem_F} and~\ref{sec:proof_RKHS_tails}), 
which are also of independent interest.
Recall from~\cite[Definition 2.1]{YLX08} the definition of the stochastic integral: for any smooth function~$f$ on $\Tt$ with bounded derivatives such that 
$\|f\|:= \int_{\Tt} {(\Gamma^{*}_{H,t} f(t))}^2 \D t < \infty$, define, for $t\in\Tt$,
$ \int_{0}^{t} f(u) \D W^H_u := \int_{0}^{t} \Gamma^{*}_{H,t} f(u) \D Z_u $, with 
$$
\Gamma^{*}_{H,t} f(s) := - \frac{\kappa_H}{s^{\Hm}} \frac{\D}{\D s} \int_{s}^{t} u^{\Hm} {(u-s)}^{\Hm} f(u) \D u,
\qquad\text{for all }s\in (0,t].
$$
\begin{lemma}\label{lem:rep_fBm}
For any $H\in (0,1)$ and $\beta>0$, there exists a standard Brownian motion~$Z$, 
such that
\begin{equation}\label{eq:RepfOU}
\xi \int_{0}^{t} \E^{\beta (t-s)} \D W^H_s = \int_{0}^{t} \Fh(t,s) \D Z_s,
\end{equation}
holds almost surely for $t\in\Tt$, where $\Fh:\Tt\times\Tt\to\RR$ is defined for $0<s<t$,
with~$\kappa_H$ in~\eqref{eq:VolterraK}, as
\begin{equation*}
\Fh(t,s) := \left\{
 \begin{array}{ll}
\displaystyle
\frac{\xi\kappa_H}{s^{\Hm}}\left[[t(t-s)]^{\Hm}
+ \int_{s}^{t}\left\{\frac{1-2H}{2u} + \beta \right\}
[u(u-s)]^{\Hm} \E^{\beta (t-u)} \D u \right],
 & \displaystyle\text{if }H <\frac{1}{2},\\
\displaystyle\frac{\xi\kappa_H \Hm}{s^{\Hm}} \int_s^t \frac{u^{\Hm}\E^{\beta (t-u)} \D u}{(u-s)^{1-\Hm}},
 & \displaystyle\text{if }H >\frac{1}{2},\\
\displaystyle\xi\E^{\beta (t-s)}, & \displaystyle\text{if } H = \frac{1}{2}.
\end{array}
\right.
\end{equation*}
\end{lemma}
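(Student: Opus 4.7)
The natural plan is to identify the Brownian motion $Z$ in~\eqref{eq:RepfOU} with the Brownian motion $B$ driving the Volterra representation~\eqref{eq:VolterraW} of $W^H$, so that the lemma reduces to matching two explicit deterministic kernels.

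Since $s\mapsto \E^{\beta(t-s)}$ is smooth, pathwise integration by parts (well-defined even though $W^H$ is not a semimartingale for $H\neq 1/2$) yields
$$
\xi \int_0^t \E^{\beta(t-s)} \D W^H_s = \xi W^H_t + \xi \beta \int_0^t \E^{\beta(t-s)} W^H_s \D s.
$$
Substituting $W^H_r = \int_0^r K^H(r,u) \D B_u$ into both terms on the right and applying a stochastic Fubini theorem to swap the outer Lebesgue integral in $s$ with the inner Wiener integral in $u$, I would obtain
$$
\xi \int_0^t \E^{\beta(t-s)} \D W^H_s = \int_0^t \widetilde F^H(t,u) \D B_u,
$$
where $\widetilde F^H(t,u) := \xi K^H(t,u) + \xi\beta \int_u^t \E^{\beta(t-s)} K^H(s,u) \D s$. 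Setting $Z:=B$, the lemma then reduces to the deterministic identity $\widetilde F^H(t,u) = \Fh(t,u)$ in each of the three regimes.

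The case $H=1/2$ is immediate since $K^{1/2}\equiv 1$ and $\xi + \xi\beta\int_u^t \E^{\beta(t-s)} \D s = \xi \E^{\beta(t-u)}$. For $H>1/2$, plugging in the Volterra kernel and exchanging the order of integration in the double integral makes the inner Lebesgue integral evaluate to $\beta^{-1}(\E^{\beta(t-v)}-1)$, and the two terms of $\widetilde F^H$ then collapse into the single integral appearing in $\Fh$.

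The hard part will be the case $H<1/2$, where the Volterra kernel~\eqref{eq:VolterraK} already contains an inner integral and matching with $\Fh$ requires a more careful bookkeeping. Using the identity $\frac{1-2H}{2v} = -\frac{\Hm}{v}$, I would split the claimed expression for $\Fh$ into its $\beta$-free and $\beta$-scaled contributions; the $\beta$-scaled parts then match those of $\widetilde F^H$ directly after one application of Fubini, and for the $\beta$-free parts a further Fubini applied to the triple integral $\int_u^t \E^{\beta(t-s)} \int_u^s (v-u)^{\Hm} v^{\Hm-1} \D v \, \D s$ reduces the required identity to the elementary relation $1 + \beta \int_v^t \E^{\beta(t-s)} \D s = \E^{\beta(t-v)}$, completing the verification.
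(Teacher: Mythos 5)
Your proposal is correct, but it takes a genuinely different route from the paper. The paper starts from the operator definition of~\cite[Definition~2.1]{YLX08}, namely $\int_0^t f(u)\,\D W^H_u := \int_0^t \Gamma^*_{H,t}f(u)\,\D Z_u$ with $\Gamma^*_{H,t}f(s) = -\frac{\kappa_H}{s^{\Hm}}\frac{\D}{\D s}\int_s^t u^{\Hm}(u-s)^{\Hm}f(u)\,\D u$, and simply evaluates $\Gamma^*_{H,t}$ at $f(s)=\xi\E^{\beta(t-s)}$ by Leibniz's rule (plus one integration by parts in the $H<1/2$ case to remove the divergent boundary term). Your route instead defines the Wiener integral against $W^H$ by pathwise integration by parts, substitutes the Volterra representation~\eqref{eq:VolterraW}, swaps orders by stochastic Fubini to extract a kernel $\widetilde F^H(t,u) = \xi K^H(t,u) + \xi\beta\int_u^t\E^{\beta(t-s)}K^H(s,u)\,\D s$ against $\D B_u$, and then verifies the deterministic identity $\widetilde F^H = \Fh$ by ordinary Fubini on the iterated integrals — I checked all three regimes and the bookkeeping you outline does close (for $H<1/2$, after the rewriting $\frac{1-2H}{2v}(v(v-u))^{\Hm} = -\Hm(v-u)^{\Hm}v^{\Hm-1}$, the remaining verification is exactly $1+\beta\int_v^t\E^{\beta(t-s)}\,\D s = \E^{\beta(t-v)}$ as you say). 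What your route buys is that it is more self-contained and elementary — no appeal to the $\Gamma^*$ operator — and it transparently identifies $Z$ with the Volterra Brownian motion $B$; what it costs is one extra layer of justification: you should note (or cite) that for a smooth deterministic integrand the integration-by-parts definition of $\int_0^t f\,\D W^H$ coincides with the one used in the paper, and that the conditions for stochastic Fubini hold, since those are the points at which your derivation and the paper's definition must be reconciled.
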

The case $\beta=0$ (and $\xi =1$) is excluded since, in that case, 
the lemma boils down to the classical Volterra representation 
of the fractional Brownian motion~\eqref{eq:VolterraW} and the function~$\Fh$ is nothing else 
than the kernel~$K^H$ given in~\eqref{eq:VolterraK}.
For $H\in (\frac{1}{2},1)$, the expression for~$\Fh$ is in agreement with~\cite[Definition 2.1]{YLX08}, 
but, for $H \in(0,\frac{1}{2})$, 
it corrects the slightly erroneous expression therein.
This function~$F^H$ allows us to fully characterise the following RKHS,
with proof postponed to Section~\ref{sec:proof_RKHS_tails}:

\begin{proposition}\label{prop:RKHS_tails}
For any $H \in (0,1)$, $\beta>0$, the space~$\Hh_{\Fh}$
is the RKHS  of the Gaussian process 
$\xi\int_{0}^{\cdot} \E^{\beta (\cdot-s)} \D W^H_s$.
\end{proposition}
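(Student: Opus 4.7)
The strategy is to reduce the statement to the standard characterisation of the RKHS of a Gaussian Volterra process. Write $U_t := \xi\int_0^t \E^{\beta(t-s)}\D W^H_s$. By Lemma~\ref{lem:rep_fBm} there exists a standard Brownian motion~$Z$ such that, almost surely for every $t\in\Tt$,
\begin{equation*}
U_t = \int_0^t \Fh(t,s)\,\D Z_s.
\end{equation*}
Hence~$U$ and the Gaussian Volterra process associated with the kernel~$\Fh$ have the same finite-dimensional distributions, and in particular the same covariance operator. Since the RKHS of a centered Gaussian process is an intrinsic invariant of its covariance, it suffices to identify the RKHS of $\int_0^\cdot \Fh(\cdot,s)\,\D Z_s$ with $\Hh_{\Fh}$.

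First, one has to verify the regularity hypothesis of Definition~\ref{def:RKHS}, namely that $\Fh(t,\cdot)\in\LTt$ for every $t\in\Tt^{*}$. For $H<\frac{1}{2}$, after bounding the absolute value of the inner bracket using monotonicity of $u\mapsto[u(u-s)]^{\Hm}$ on $(s,t)$ and the boundedness of the exponential factor, the map $s\mapsto\Fh(t,s)$ is controlled by a constant times $s^{-\Hm}(t-s)^{\Hm}$, whose square is integrable on $(0,t)$ because $2\Hm>-1$. For $H>\frac{1}{2}$, the same type of estimate yields a control by $s^{-\Hm}(t-s)^{\Hm}$, which again belongs to $\LTt$ since $H<1$. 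The case $H=\frac{1}{2}$ is immediate. These bounds are elementary and constitute the only computational ingredient.

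Once the integrability is secured, the covariance of $U$ is
\begin{equation*}
R(s,t)=\int_0^{s\wedge t}\Fh(s,u)\Fh(t,u)\,\D u,
\end{equation*}
and $R(\cdot,t)=\int_0^{\cdot}\Fh(\cdot,u)\bigl[\Fh(t,u)\ind_{[0,t]}(u)\bigr]\D u$ belongs to $\Hh_{\Fh}$ by Definition~\ref{def:RKHS}, with
\begin{equation*}
\bigl\langle R(\cdot,s),R(\cdot,t)\bigr\rangle_{\Hh_{\Fh}}
=\bigl\langle \Fh(s,\cdot)\ind_{[0,s]},\Fh(t,\cdot)\ind_{[0,t]}\bigr\rangle_{\LTt}
=R(s\wedge t,s\vee t),
\end{equation*}
so that the RKHS inner product on the span of $\{R(\cdot,t):t\in\Tt\}$ coincides with the one inherited from $\Hh_{\Fh}$.

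To conclude, the linear isometry $\KK_{\Fh}:\LTt\ni f\mapsto \int_0^\cdot \Fh(\cdot,u)f(u)\D u$ has image $\Hh_{\Fh}$ by definition. The span of the indicator-based functions $\{\Fh(t,\cdot)\ind_{[0,t]}:t\in\Tt\}$ is dense in $\LTt$: any $g\in\LTt$ orthogonal to all such elements satisfies $\int_0^t \Fh(t,u)g(u)\D u=0$ for every $t$, which forces $g=0$ by injectivity of the Volterra operator associated with the non-degenerate kernel $\Fh$ (a property inherited from that of $K^H$). Applying $\KK_{\Fh}$, the span of $\{R(\cdot,t):t\in\Tt\}$ is dense in $\Hh_{\Fh}$, which therefore coincides with the RKHS of~$U$. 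The main obstacle is the $\LTt$-bound on $\Fh(t,\cdot)$ for the singular regime $H\neq\tfrac{1}{2}$; everything else is then a direct application of the abstract RKHS construction recalled in Section~\ref{Sec:Notations}.
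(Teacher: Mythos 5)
Your argument is correct in substance and takes a genuinely different route from the paper's. You invoke the abstract characterisation of the RKHS of a centered Gaussian measure as the completion of the linear span of the covariance functions $R(\cdot,t)$ under the reproducing inner product: after checking $\Fh(t,\cdot)\in\LTt$, you observe that $R(\cdot,t)=\KK_{\Fh}\bigl[\Fh(t,\cdot)\ind_{[0,t]}\bigr]$, match the inner products, and conclude by showing that $\{\Fh(t,\cdot)\ind_{[0,t]}:t\in\Tt\}$ spans a dense subspace of $\LTt$, so the completion is all of $\Hh_{\Fh}=\KK_{\Fh}\LTt$. The paper instead establishes two facts: (a) injectivity of $\KK_{\Fh}$ on $\LTt$ via strict positivity of $\Fh(t,\cdot)$, and (b) density of $\Hh_{\Fh}$ in $\Cc$ for the supremum norm, by producing the \emph{lower} bounds $\Fh(t,\cdot)^{-1}\in\LTt$ (quoted from Yan--Lu--Xu), exhibiting all polynomials vanishing at the origin inside $\Hh_{\Fh}$, and invoking Stone--Weierstrass. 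Your route buys a modest simplification: you only need upper $L^2$-bounds on $\Fh(t,\cdot)$, not lower bounds, and you bypass the density-in-$\Cc$ discussion entirely; the paper's route additionally records the full-support property of the law on $\Cc$, which is a byproduct of interest elsewhere but not strictly needed for the RKHS identification.

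The one place where you cut a corner is the injectivity of $\KK_{\Fh}$, which both arguments ultimately hinge on. You write that it is ``inherited from that of $K^H$'', but $\Fh$ is not obtained from $K^H$ by a pointwise change of variable; it is the kernel produced by applying the Volterra transform $\Gamma^{*}_{H,t}$ to the exponential kernel, so the inheritance needs an actual argument. The cleanest way to make it precise is to note that, on the Cameron--Martin level, $(\KK_{\Fh}f)(t)=\xi\int_0^t \E^{\beta(t-u)}\D(\KK_{K^H}f)_u$; the resolvent map $\psi\mapsto \xi\int_0^{\cdot}\E^{\beta(\cdot-u)}\D\psi_u$ is invertible (with inverse $\phi\mapsto \xi^{-1}(\phi-\beta\int_0^{\cdot}\phi_s\D s)$), so $\KK_{\Fh}f\equiv 0$ forces $\KK_{K^H}f\equiv 0$, and then the known injectivity of $\KK_{K^H}$ gives $f=0$. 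Alternatively, the paper argues injectivity directly from the strict positivity $\Fh(t,s)>0$ for $0<s<t$. Either of these should be spelled out; as written, the density-in-$\LTt$ step is not self-contained.
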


\subsubsection{The Millet-Nualart-Sanz approach}\label{app:MNS}

In~\cite{MNS}, Millet, Nualart and Sanz consider a perturbed stochastic differential equation of the form
\begin{equation}\label{eq:MNSsystem}
\D \X^{\eps}_t = \bk(\eps,\X^{\eps}_t)\D t + \eps \ak(\X^{\eps}_t)\D \W_t.
\end{equation}
Here, for any~$\eps>0$, the functions 
$\bk(\eps,\cdot):\RR^n\to\RR^n$ and $\ak: \RR^n \rightarrow \mathcal{M}_{(n,d)}(\RR)$
are bounded Borel measurable and uniformly Lipschitz,
$\bk(\eps,\cdot)$ converges uniformly to a function~$\bk(\cdot)$ as~$\eps$ tends to zero, 
$\W$ is a $d$-dimensional Brownian motion,
and~$\X^\eps_0$ is an $\RR^n$-valued square-integrable random variable.
Existence and uniqueness of a strong solution can be found in~\cite[Chapter 5, Theorem 2.1]{Friedman}.
Following classical large deviations steps, consider,
for any $\varphi\in\Hh$, the controlled ordinary differential equation on~$\Tt$:
\begin{equation}\label{eq:ControlODE}
\dot{\psi}_t = \ak(g_t) \dot{\varphi}_t + \bk(\psi_t),
\end{equation}
the solution flow of which, starting from~$\xx_0\in\RR^n$ is denoted by~$\Ss_{\xx_0}(\varphi)$.
Millet, Nualart and Sanz~\cite{MNS} proved a large deviations principle~\cite[Theorem 4.1]{MNS} 
for the sequence~$(\X^\eps)_{\eps>0}$ under the following assumption:
\begin{assumption}\label{ass:MNS}
Both~$\ak(\cdot)$ and~$\bk(\cdot)$ belong to~$\Cc_b^2$, and there exists $\xx_0 \in \RR^n$ such that, for any $\delta >0$,
\begin{equation}\label{assump1}
\limsup_{\varepsilon \downarrow 0} \eps^2 \log \PP \left(|\X^\eps_0-\xx_0| > \delta \right) = - \infty.
\end{equation}
\end{assumption}
\begin{theorem}\label{thm:MNSLDP}
Under Assumption~\ref{ass:MNS}, 
$(\X^{\eps})_{\eps >0}\sim\LDP(\eps^2, I)$ with
$I(\psi) = \inf\{\Lambda(\varphi): \varphi \in \Hh, \psi=\Ss_{\xx_0}(\varphi)\}$.
\end{theorem}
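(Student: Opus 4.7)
The plan is to reduce Theorem~\ref{thm:MNSLDP} to a classical Freidlin-Wentzell LDP for the process with a deterministic initial condition, by stripping away both the randomness in $\X_0^\eps$ and the $\eps$-dependence of the drift via two successive exponential equivalences, and then transporting the resulting LDP back to $\X^\eps$.

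\textbf{Step 1 (removing the random starting point).} Introduce the auxiliary process $\widetilde{\X}^\eps$ solving the same SDE~\eqref{eq:MNSsystem} but started deterministically at $\xx_0$. Since $a$ is uniformly Lipschitz, a pathwise Gronwall argument yields, for some constant $C>0$,
$$
\sup_{t\in\Tt}|\X_t^\eps-\widetilde{\X}_t^\eps|\,\leq\,C\E^{C}\Bigl(|\X_0^\eps-\xx_0|+\eps\sup_{t\in\Tt}\Bigl|\int_0^t\bigl(a(\X_s^\eps)-a(\widetilde{\X}_s^\eps)\bigr)\D\W_s\Bigr|\Bigr).
$$
The stochastic integral term, suitably localised, admits a Gaussian exponential tail at speed~$\eps^2$ via the exponential martingale inequality (using boundedness of $a$ from the $\Cc_b^2$ hypothesis). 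Combined with Assumption~\ref{ass:MNS}, this shows that for every $\delta>0$,
$$
\limsup_{\eps\downarrow 0}\eps^2\log\PP\Bigl(\sup_{t\in\Tt}|\X_t^\eps-\widetilde{\X}_t^\eps|>\delta\Bigr)=-\infty,
$$
so $(\X^\eps)$ and $(\widetilde{\X}^\eps)$ are exponentially equivalent at speed~$\eps^2$ in the sense of Definition~\ref{def:ExpEquiv}.

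\textbf{Step 2 (removing the $\eps$-dependence in the drift).} Let $\overline{\X}^\eps$ solve $\D\overline{\X}_t^\eps=b(\overline{\X}_t^\eps)\D t+\eps a(\overline{\X}_t^\eps)\D\W_t$ with $\overline{\X}_0^\eps=\xx_0$. A second Gronwall estimate, together with $\|b(\eps,\cdot)-b(\cdot)\|_\infty\to 0$ (a deterministic bound that trivially yields $-\infty$ in the $\eps^2\log\PP$-sense), gives exponential equivalence of $(\widetilde{\X}^\eps)$ and $(\overline{\X}^\eps)$.

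\textbf{Step 3 (classical Freidlin-Wentzell for $\overline{\X}^\eps$).} Schilder's theorem gives $(\eps\W)\sim\LDP(\eps^2,\Lambda)$ on $\Cc$. Since the Itô map $\varphi\mapsto\Ss_{\xx_0}(\varphi)$ defined by~\eqref{eq:ControlODE} is \emph{not} continuous on $\Cc$ for the uniform topology, one cannot invoke the plain contraction principle. Instead, one uses Azencott's polygonal approximation: approximate $\eps\W$ by its piecewise linear interpolations $\eps\W^n$ on a mesh of size $1/n$, apply the ordinary (continuous) contraction principle to obtain an LDP for $\Ss_{\xx_0}(\eps\W^n)$, and control the remainder $\overline{\X}^\eps-\Ss_{\xx_0}(\eps\W^n)$ at the $\eps^2\log$-scale through exponential martingale inequalities, exploiting the $\Cc_b^2$ regularity of $a,b$. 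The extended contraction principle~\cite[Proposition 2.3]{MNP92} then delivers
$$
(\overline{\X}^\eps)\sim\LDP\bigl(\eps^2,I\bigr),\qquad I(\psi)=\inf\{\Lambda(\varphi):\varphi\in\Hh,\,\psi=\Ss_{\xx_0}(\varphi)\},
$$
with the infimum over the empty set understood as $+\infty$; lower semicontinuity and compact level sets of~$I$ follow from the continuity of the limiting map $\Ss_{\xx_0}$ on $\Hh$-balls and compactness of the level sets of~$\Lambda$.

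\textbf{Step 4 (conclusion).} Transitivity of exponential equivalence (Theorem after Definition~\ref{def:ExpEquiv}) transports the LDP from $\overline{\X}^\eps$ through $\widetilde{\X}^\eps$ to $\X^\eps$, giving the claimed rate function without any change.

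The hard part is Step~3: the lack of continuity of the Itô map is the classical obstruction, and the polygonal-approximation argument requires careful exponential estimates on the difference between the true SDE and the ODE driven by the mollified Brownian path. This is precisely the role of the $\Cc_b^2$ hypothesis in Assumption~\ref{ass:MNS}, which is stronger than what is needed in Steps 1-2 (mere Lipschitz regularity suffices there), and explains why tightening the assumption on~$\Theta$ via~\eqref{assump1} allows one to escape the more delicate RKHS analysis required for the general fractional result.
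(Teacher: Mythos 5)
The paper does not actually prove this statement: it is recalled verbatim from Millet, Nualart and Sanz as \cite[Theorem 4.1]{MNS} and stated without proof, so there is no internal argument to compare your attempt against. Assessing the attempt on its own terms: the overall plan (strip the random start by exponential equivalence, strip the $\eps$-dependence in the drift, then a classical Freidlin--Wentzell LDP) is a reasonable way to try to reconstruct the result in the non-anticipating case, and Steps~2--4 are standard.

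Step~1, however, has a genuine gap. Your Gronwall bound is fine, but applying the exponential martingale inequality to $\eps\int_0^{\cdot}\big(a(\X^\eps_s)-a(\widetilde\X^\eps_s)\big)\D\W_s$ with only the \emph{uniform} bound $\|a\|_\infty$ on the integrand produces a tail of the form $\exp(-c\delta^2/\eps^2)$ for a \emph{fixed} constant $c>0$, hence $\limsup_{\eps\downarrow 0}\eps^2\log\PP(\|\X^\eps-\widetilde\X^\eps\|_\infty>\delta)\le -c\delta^2$, a finite quantity rather than the $-\infty$ required by Definition~\ref{def:ExpEquiv}. Switching to the Lipschitz bound $|a(\X^\eps_s)-a(\widetilde\X^\eps_s)|\le L|\X^\eps_s-\widetilde\X^\eps_s|$ and stopping at the first hitting time of a level $\alpha$ does not help either: both the exit threshold and the variance of the localised integrand scale linearly in $\alpha$ (equivalently in the threshold $\eta$ chosen for $|\X_0^\eps-\xx_0|$), so the resulting $\eps^2\log$-rate stays bounded as $\alpha,\eta\downarrow 0$ instead of diverging. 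To close Step~1 one needs a sharper comparison estimate, for example applying It\^o/Gronwall and the exponential martingale inequality to $\log(|\X^\eps_t-\widetilde\X^\eps_t|^2+r)$ (whose martingale part has a \emph{bounded} integrand thanks to Lipschitzness), which yields a tail of order $\exp\big(-(\log(\delta/\eta))^2/(c\eps^2)\big)$ whose $\eps^2\log$-rate does diverge as $\eta\downarrow 0$, and only then is the full strength of condition~\eqref{assump1} actually used. An alternative, closer in spirit to~\cite{MNS}, is to abandon exponential equivalence and prove the LDP upper and lower bounds directly by conditioning on $\X^\eps_0=y$ and then exploiting the exponential concentration of $\X^\eps_0$ near $\xx_0$. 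A secondary remark: \cite{MNS} is set up for \emph{anticipating} SDEs, where $\X^\eps_0$ need not be $\Ff_0$-measurable and the stochastic integral is a Skorohod integral; your Gronwall argument tacitly assumes the adapted It\^o setting, which is harmless for this paper's applications (where $\Theta$ is independent of $(B,B^\perp)$), but does not recover the generality of the cited result.
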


Condition~\eqref{assump1} is an exponential equivalence property 
between the initial random variable~$\X^\eps_0$
and the constant~$\xx_0$, and ensures that large deviations are preserved under exponentially small perturbations
of the starting point.
Therefore, in the standard diffusion case $H = \frac{1}{2}$, 
it is possible to obtain a similar result to Theorem~\ref{Th:tails_frac_X} with a simplified rate function 
(albeit with slightly more restrictions on the starting point), 
by using the approach considered by Millet-Nualart-Sanz in~\cite{MNS}.
For this, we rewrite~\eqref{eq:MNSsystem} to correspond to~\eqref{eq:fracSSRandom}, albeit with stronger assumptions on the coefficients, with 
$\W:=(W_1, W_2)'$ a Brownian motion, 
$\X^\eps = (X^\eps, Y^\eps)$, $H=\frac{1}{2}$,
$\eps\to\eps^b$,
$\X_0^\eps = (0, \eps \Theta)$, and $\overline{\rho} := \sqrt{1-\rho^2}$,

$$
\bk(\eps,\X^{\eps}_t) = 
\begin{pmatrix}
- \frac{1}{2} \widetilde{\sigma}(Y^{\eps}_t)^2\\
\eps \lambda + \beta Y^{\eps}_t
\end{pmatrix}
\qquad\text{and}\qquad
\ak(\X^{\eps}_t) = 
\begin{pmatrix}
\overline{\rho}\widetilde{\sigma}(Y^{\eps}_t) & \rho\widetilde{\sigma}(Y^{\eps}_t)\\
0 & \xi
\end{pmatrix}.
$$
The correlation between the two components of $\W$ is explicitly represented in the diffusion matrix~$a$.

\begin{theorem}~\label{thm:ext_MNSLDP}
Under Assumption~\ref{ass:MNS}, 
the solution~$\X^{\eps}$ to~\eqref{eq:MNSsystem}
satisfies $(\X^{\eps})_{\eps >0}\sim\LDP(\eps^2, \mathbf{I})$ with
$\mathbf{I}(\chi) = \inf \left\{ \Lambda(\varphi) : \varphi \in \Hh, \chi = \Ss_{\xx_0}(\Psi^{\rho}(\varphi))\right\}$
and
$\Psi^{\rho}:\RR^2\ni\mathrm{z}\mapsto 
\begin{pmatrix}
\overline{\rho} & \rho\\
0 & 1
\end{pmatrix}
\mathrm{z},
$.
\end{theorem}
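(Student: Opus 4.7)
The plan is to apply Theorem~\ref{thm:MNSLDP} directly to the two-dimensional system~\eqref{eq:MNSsystem} with the pair $(a, b(\eps,\cdot))$ exhibited in the statement, and then to rewrite the resulting rate function via an algebraic factorisation of the diffusion matrix that isolates the correlation structure. As a preliminary step, I would verify that Assumption~\ref{ass:MNS} is in force: the Lipschitz property and the differentiability of~$a$ and of the limiting drift $b(0,\chi) = (-\tilde{\sigma}(y)^2/2,\beta y)^{T}$ follow from Assumption~$\Aa_b$ on~$\widetilde\sigma$; the uniform convergence $b(\eps,\cdot)\to b(0,\cdot)$ as $\eps\downarrow 0$ reduces to $\eps\lambda\to 0$; and the exponential concentration~\eqref{assump1} of $\X^{\eps}_0 = (0,\eps\Theta)$ around $\xx_0 = (0,0)$ becomes $\limsup_{\eps\downarrow 0}\eps^{2}\log\PP(\eps|\Theta|>\delta) = -\infty$ for every $\delta>0$, which is precisely the content of~\eqref{assump1} under Assumption~\ref{ass:MNS}.

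Granting these checks, Theorem~\ref{thm:MNSLDP} yields $(\X^{\eps})_{\eps>0}\sim\LDP(\eps^2,J)$ with
\begin{equation*}
J(\chi) = \inf\bigl\{\Lambda(\widetilde\varphi):\widetilde\varphi\in\Hh,\ \chi = \Ss^{a}_{\xx_0}(\widetilde\varphi)\bigr\},
\end{equation*}
where $\Ss^{a}_{\xx_0}(\widetilde\varphi)$ denotes the solution of the controlled ODE $\dot\psi_t = a(\psi_t)\dot{\widetilde\varphi}_t + b(0,\psi_t)$ started at~$\xx_0$, and~$\Hh$ is understood as the two-dimensional Cameron-Martin space (a harmless abuse of the notation in Theorem~\ref{thm:MNSLDP}). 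The key identity is the factorisation
\begin{equation*}
a(\chi) = \widetilde{a}(\chi)\,\Psi^{\rho}, \qquad\text{with}\qquad \widetilde{a}(\chi) := \mathrm{diag}\bigl(\widetilde\sigma(y),\xi\bigr),
\end{equation*}
which is immediate by direct matrix multiplication. Inserting this into the controlled ODE converts the driving term into $\widetilde{a}(\psi_t)\,\dot{(\Psi^{\rho}\widetilde\varphi)_t}$, so if $\Ss_{\xx_0}$ denotes the flow of the ``decorrelated'' ODE $\dot\psi = \widetilde{a}(\psi)\dot\phi + b(0,\psi)$, one reads off the flow identity $\Ss^{a}_{\xx_0}(\varphi) = \Ss_{\xx_0}(\Psi^{\rho}\varphi)$. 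Substituting this into the expression for~$J$ produces the claimed form $\mathbf{I}(\chi) = \inf\{\Lambda(\varphi): \chi = \Ss_{\xx_0}(\Psi^{\rho}(\varphi))\}$.

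The main obstacle is the preliminary verification of the $\Cc_b^2$-boundedness requirement embedded in Assumption~\ref{ass:MNS}: since~$\widetilde\sigma$ has only linear growth, the component $-\widetilde\sigma(y)^2/2$ of the drift is quadratic, so the setting of~\cite{MNS} is not directly applicable. The natural remedy, consistent with the rest of the paper, is to invoke Assumption~$\Aa_b'$: approximate~$\widetilde\sigma$ by bounded $\Cc_b^2$ functions~$\sigma_n$ converging uniformly on~$\RR$, apply Theorem~\ref{thm:MNSLDP} to each truncated system to obtain an LDP at each level~$n$, and pass to the limit $n\uparrow\infty$ using the exponential equivalence granted by~$\Aa_b'(\mathrm{ii})$ together with the transfer theorem following Definition~\ref{def:ExpEquiv}. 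Once this localisation is carried out, the matrix-factorisation step above is purely algebraic and concludes the proof.
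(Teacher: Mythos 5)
Your proof is correct, and it genuinely differs from the paper's in how it handles the correlation. The paper reopens the proof of Theorem~\ref{thm:MNSLDP}: it views the driving noise as the correlated pair $\W'=\Psi^{\rho}(W^{\perp}_2,W_2)'$, applies Schilder's theorem to the independent pair, pushes the LDP through the continuous map~$\Psi^{\rho}$ via the Contraction Principle to obtain the rate function~$\Lambda^{\rho}$ of~\eqref{eq:ext_grf}, and then reruns the remainder of the Millet--Nualart--Sanz argument against the flow with a \emph{diagonal} diffusion matrix, replacing~$\Lambda$ by~$\Lambda^{\rho}$ throughout. You instead apply Theorem~\ref{thm:MNSLDP} as a black box to the full diffusion matrix~$a$ (which already encodes~$\rho$), yielding the rate function $\inf\{\Lambda(\varphi):\chi=\Ss^{a}_{\xx_0}(\varphi)\}$, and then observe the algebraic factorisation $a=\widetilde a\,\Psi^{\rho}$ with $\widetilde a=\mathrm{diag}(\widetilde\sigma,\xi)$, which immediately gives the flow identity $\Ss^{a}_{\xx_0}(\varphi)=\Ss_{\xx_0}(\Psi^{\rho}\varphi)$ and hence the stated form of~$\mathbf{I}$. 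Both routes yield the identical variational formula (unwinding~$\Lambda^{\rho}$ in the paper's form recovers yours), but your route is cleaner in that it requires no modification inside the MNS proof --- the change of variables happens after the theorem has been invoked. Two small remarks: your preliminary ``verification'' that~\eqref{assump1} holds is circular, since \eqref{assump1} is part of the hypothesis Assumption~\ref{ass:MNS} rather than something to be derived; and your observation that the quadratic drift $-\widetilde\sigma(y)^2/2$ breaks the $\Cc_b^2$/boundedness requirement of~\cite{MNS} is a genuine issue that the paper leaves unaddressed --- it concerns the applicability of the theorem to the Stein--Stein coefficients rather than the abstract statement, and your proposed truncation-and-exponential-equivalence remedy via Assumption~$\Aa_b'$ is the natural fix, consistent with how the paper deals with the same difficulty elsewhere (e.g., in the proof of Theorem~\ref{Th:tails_frac_X}(ii)).
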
 

\begin{proof}
The proof of Theorem~\ref{thm:MNSLDP} relies 
first on proving a large deviations principle 
for the flow~$\Ss_{\xx_0}$ using Schilder's theorem~\cite[Theorem 5.2.3]{DZ}, 
then on extending this LDP to the original system.
One can easily extend it to include a correlation parameter $\rho \in (-1,1)$, 
the main difference being the rate function.
Indeed, since $W^\perp_2$ and $W_2$ are independent, 
Schilder's theorem yields that $\eps(W^\perp_2, W_2)' \sim \LDP(\eps^2, \Lambda)$. 
Since the map~$\Psi^{\rho}$ is continuous on $(\Cc, \|\cdot\|_{\infty})$ 
and $\eps\W' =\Psi^{\rho}(\eps(W^\perp_2, W_2)')$, 
the theorem follows from the Contraction Principle giving an LDP for~$\eps\W'$ 
as~$\eps$ tends to zero with speed~$\eps^2$ and good rate function 
\begin{equation}\label{eq:ext_grf}
\Lambda^{\rho}(\psi):=
\inf \left\{ \Lambda(\varphi): \varphi\in\Hh, \psi=\Psi^{\rho}(\varphi) \right\}.
\end{equation}
\end{proof}

\subsection{Small-time behaviour}\label{Sec:Smalltime}
We now tackle the small-time behaviour of the process~\eqref{eq:model}.
Under the general set of assumptions
$\Aa$, $\Aa'$, $\Abt$, we need to introduce a particular rescaling, 
both in time and in space in order to observe some weak convergence.
This is different from the classical It\^o diffusion case (with fixed starting point),
where solutions of such SDEs generally converge in small time.
In the It\^o case, though, if the distribution of the starting point has compact support,
we show in Section~\ref{Sec:Mellouk} that space rescaling is not required any longer.

\subsubsection{The general case}\label{sec:GenCase}
With the rescaling 
$(X^{\eps}_t, Y^{\eps}_t) := (\eps^{2H+2b-1} X_{\eps^2 t}, \eps^b Y_{\eps^2 t})$,
with $b>0$, \eqref{eq:model} becomes
\begin{equation}\label{eq:SmalltimeModel}
\left\{
\begin{array}{ll}
\displaystyle
\D X^{\eps}_t = - \frac{\eps^{2H+1+2b}}{2} \sigma\left(\frac{Y^{\eps}_t}{\eps^b}\right)^2 \D t + \eps^{2H+2b} \sigma\left(\frac{Y^{\eps}_t}{\eps^b}\right) \left(\rho\D B_t + \overline{\rho} \D B^{\perp}_t\right), &X^{\eps}_0=0, \\
\D Y^{\eps}_t = \left( \eps^{b+2} \lambda + \beta \eps^2 Y^{\eps}_t \right) \D t + \eps^{2H+b} \xi \D W^H_t, &Y^{\eps}_0 \sim \eps^b \Theta.
\end{array}
\right.
\end{equation}

Our main result is as follows:

\begin{theorem}\label{Th:LDP_ST_X}
For any $H \in (0,1)$, 
\begin{enumerate}[(i)]
\item for any $b>0$ such that~$\Abt$ holds
$Y^{\eps}\sim\LDP\left(\eps^{4H+2b}, \Lambda_{G^H_0}\right)$, with~$\Lambda_{G^H_0}$ 
as in~\eqref{eq:LDPVolterrageneral};
\item 
if $b\ge \frac{1}{2}-2H$ such that $\Aa$, $\Aa'$, $\Abt$ hold, 
$X^\eps\sim\LDP(\eps^{4H+2b}, \II)$, with~$\II$ defined in~\eqref{eq:grf_X_ST}.
\end{enumerate}
\end{theorem}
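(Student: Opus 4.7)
\textbf{Proof plan for Theorem~\ref{Th:LDP_ST_X}.}

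\emph{Part (i).} The strategy is to compare $Y^\eps$ with the leading-order scaled fractional Brownian motion $\widetilde Y^\eps_t := \xi\eps^{2H+b}W^H_t$. Applying the variation-of-constants formula to the linear SDE in~\eqref{eq:SmalltimeModel} yields
\begin{equation*}
Y^\eps_t = \E^{\beta\eps^2 t}\eps^b\Theta + \lambda\eps^{b+2}\int_0^t\E^{\beta\eps^2(t-s)}\D s + \xi\eps^{2H+b}\int_0^t\E^{\beta\eps^2(t-s)}\D W^H_s.
\end{equation*}
I would then establish exponential equivalence (Definition~\ref{def:ExpEquiv}) with $\widetilde Y^\eps$ at speed $h_\eps=\eps^{4H+2b}$. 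The initial-condition term is handled by Assumption $\Abt$; the deterministic drift is of order $\eps^{b+2}=o(\eps^{2H+b})$ hence trivially exponentially negligible; and the exponential smoothing $\E^{\beta\eps^2(t-s)}\to 1$ uniformly in $(s,t)\in\Tt^2$, so the remaining stochastic-integral difference is $o(\eps^{2H+b})$ uniformly. By the Volterra LDP~\eqref{eq:LDPVolterrageneral} applied to the kernel $G^H_0(t,s):=\xi K^H(t,s)$ (the $\beta=0$ limit of $F^H$ from Lemma~\ref{lem:rep_fBm}), $\widetilde Y^\eps\sim\LDP(\eps^{4H+2b},\Lambda_{G^H_0})$, and exponential equivalence transfers the LDP to~$Y^\eps$.

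\emph{Part (ii).} I would proceed in two stages. First, use Assumptions $\Aa_b$ and $\Aa_b'$ to replace $\eps^b\sigma(Y^\eps_t/\eps^b)$ by $\widetilde\sigma(Y^\eps_t)$ in both the drift and diffusion of $X^\eps$. The approximating sequence $(\sigma_n)$ in $\Aa_b'$ serves to localise, and an exponential equivalence at speed $\eps^{4H+2b}$ between $X^\eps$ and
\begin{equation*}
\widetilde X^\eps_t = -\tfrac{1}{2}\eps^{2H+1}\int_0^t \widetilde\sigma(Y^\eps_s)^2\D s + \eps^{2H+b}\int_0^t\widetilde\sigma(Y^\eps_s)(\rho\D B_s+\bar\rho\D B^\perp_s)
\end{equation*}
follows in the spirit of~\cite[Proposition 2.3]{MNP92}. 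The condition $b\ge\tfrac{1}{2}-2H$, i.e. $2H+1\ge 2H+b+\tfrac12$, guarantees that the finite-variation drift is of strictly smaller order than $\eps^{2H+b}$ and is thus exponentially negligible at speed $\eps^{4H+2b}$.

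Second, establish the joint LDP for the triple $(Y^\eps,\eps^{2H+b}B,\eps^{2H+b}B^\perp)$ at speed $\eps^{4H+2b}$. Via the Volterra representation~\eqref{eq:VolterraW}, $W^H$ is driven by a Brownian motion coupled to $(B,B^\perp)$, so the triple is a linear Gaussian functional of a joint Brownian driver and Schilder's theorem~\cite[Theorem~5.2.3]{DZ} combined with part~(i) yields joint exponential tightness and the correct joint rate function. Applying the Contraction Principle to the continuous map
\begin{equation*}
(\varphi,\phi_1,\phi_2)\longmapsto \int_0^{\cdot}\widetilde\sigma(\varphi_s)\bigl(\rho\,\D\phi_{1,s}+\bar\rho\,\D\phi_{2,s}\bigr),
\end{equation*}
defined on the RKHS product $\Hh_{G^H_0}\times\Hh\times\Hh$, then gives $X^\eps\sim\LDP(\eps^{4H+2b},\II)$ with
\begin{equation*}
\II(\psi)=\inf\Bigl\{\Lambda_{G^H_0}(\varphi)+\Lambda(\phi_1)+\Lambda(\phi_2):\ \psi=\textstyle\int_0^{\cdot}\widetilde\sigma(\varphi_s)(\rho\,\D\phi_{1,s}+\bar\rho\,\D\phi_{2,s})\Bigr\}.
\end{equation*}

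\emph{Expected main obstacle.} The hard part will be the exponential equivalence between $X^\eps$ and~$\widetilde X^\eps$: since $\sigma$ satisfies only a linear growth bound, one cannot bound $\sigma(Y^\eps_t/\eps^b)$ uniformly in $\omega$, and the factor $1/\eps^b$ inside the argument amplifies deviations of $Y^\eps$. Controlling the resulting stochastic integral requires Burkholder-Davis-Gundy estimates combined with the LDP of $Y^\eps$ from part~(i) and careful use of Assumption $\Aa_b'$ to localise, exactly the situation where the extended contraction framework of~\cite{MNP92} is needed rather than the classical one. A secondary subtlety is tracking the precise coupling between~$W^H$ and~$(B,B^\perp)$ through the Volterra kernel so that the joint rate function takes the product form above.
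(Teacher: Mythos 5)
Your plan for part~(i) follows essentially the same route as the paper: reduce to the zero-start process via the variation-of-constants formula, kill the $\Theta$-term using Assumption~$\Abt$, kill the deterministic drift term, and absorb the exponential smoothing $\E^{\beta\eps^2(t-s)}\to 1$. Two caveats. First, the identification $G^H_0=\xi K^H$ is correct, but the paper reaches it by passing through the kernel $G^H_\eps$ of Lemma~\ref{lem:rep_fBm} (with $\beta\mapsto\beta\eps^2$) and then proving exponential equivalence between $\int G^H_\eps\,\D Z$ and $\int G^H_0\,\D Z$ via explicit Gaussian tail bounds on $V_\eps^2=\int_0^t(G^H_\eps-G^H_0)^2\D s$. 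Your phrase that the difference of stochastic integrals is ``$o(\eps^{2H+b})$ uniformly'' is not an argument: that difference is a random variable, and one needs a quantitative estimate showing it is \emph{exponentially} negligible at speed $\eps^{4H+2b}$, which is precisely what the Gaussian tail estimate delivers. Still, your structure is the right one.

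Part~(ii) has a genuine gap. You propose to obtain the LDP for $X^\eps$ by applying the Contraction Principle to the map
\[
(\varphi,\phi_1,\phi_2)\longmapsto \int_0^{\cdot}\widetilde\sigma(\varphi_s)\bigl(\rho\,\D\phi_{1,s}+\bar\rho\,\D\phi_{2,s}\bigr),
\]
``defined on the RKHS product $\Hh_{G^H_0}\times\Hh\times\Hh$.'' This is not an admissible use of the Contraction Principle: the LDP for the Gaussian drivers lives on the path space $\Cc$ (with sup norm), not on the RKHS, and the stochastic-integral map above is not a continuous functional on $\Cc\times\Cc\times\Cc$ --- indeed, for general continuous $\phi_1,\phi_2$ the integral is not even classically defined. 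The paper handles exactly this obstruction by invoking Garcia's LDP for stochastic integrals~\cite[Theorem~1.2]{Gar08}, which requires the driving semimartingale sequence $\eps^{2H+b}W$ to be \emph{uniformly exponentially tight} and yields a rate function in terms of pairs $(\varphi,\psi)$ with $\psi\in\BV$. This is also where the threshold $b\ge \tfrac12-2H$ comes from: it is equivalent to $2H+b\ge\tfrac12$, the UET condition for $\eps^{2H+b}W$, not --- as you claim --- a condition ensuring drift negligibility (your inequality $2H+1\ge 2H+b+\tfrac12$ reads $b\le\tfrac12$, which is unrelated). As a consequence, your candidate rate function $\II$ need not coincide with the paper's~\eqref{eq:grf_X_ST}, and the step from the joint LDP for $(Y^\eps,\eps^{2H+b}B,\eps^{2H+b}B^\perp)$ to the LDP for $X^\eps$ is unjustified. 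To repair the argument you should: (a) pass from $(\eps^{2H+b}(\rho B+\bar\rho B^\perp),Y^\eps)$ to $(\eps^{2H+b}(\rho B+\bar\rho B^\perp),\widetilde\sigma(Y^\eps))$ by the \emph{extended} Contraction Principle of~\cite[Proposition~2.3]{MNP92} using Assumption~$\Aa_b'$; (b) invoke~\cite[Theorem~1.2]{Gar08} with the UET hypothesis $b\ge\tfrac12-2H$ for the stochastic-integral LDP; and (c) close the loop with an explicit exponential equivalence between $X^\eps$ and $\widetilde X^\eps$, along the lines of the proof of Theorem~\ref{Th:tails_frac_X}(ii).
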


The proof of~(i) is similar to that of Theorem~\ref{Th:tails_frac_X}(i) 
and relies on proving LDP for an auxiliary process, defined in~\eqref{eq:auxi}, 
exponentially equivalent to the original (rescaled) process~$Y^{\eps}$.
The proof of~(ii) is more involved and postponed to Appendix~\ref{sec:Th:LDP_ST_X}.
In order to state the following key result, define, for any $\eps>0$, $G^H_{\eps}$ 
as the function~$\Fh$ in Lemma~\ref{lem:rep_fBm}, 
replacing~$\beta$ by~$\beta \eps^2$, and for $s,t\in\Tt$ with $0<s<t$, its pointwise limit
\begin{equation*}
G^H_0 (t,s) := \lim_{\eps \downarrow 0} G^H_\eps (t,s) = 
\left\{
\begin{array}{ll}
\displaystyle \frac{\xi\kappa_H \Hm}{s^{\Hm}} \int_s^t \frac{u^{\Hm}}{(u-s)^{1-\Hm}} \D u, 
& \displaystyle \text{for } H \in \left(\frac{1}{2}, 1\right),\\
\displaystyle \frac{\xi\kappa_H}{s^{\Hm}}\left({(t(t-s))}^{\Hm} 
- \Hm \int_{s}^{t} {(u-s)}^{\Hm} u^{\Hm - 1} \D u \right), & \displaystyle \text{for } H \in \left(0, \frac{1}{2}\right), \\
\xi, & \displaystyle \text{for }H=\frac{1}{2}.
\end{array}
\right.
\end{equation*}

The following proposition is similar to Proposition~\ref{prop:RKHS_tails}, 
as $G^H_0(t,\cdot) \in \LTt$ and for all $0<s<t$, $G^H_0(t,s) >0$, and its proof is omitted.
\begin{proposition}\label{Pp:RKHS_ST}
For any $H \in (0,1)$, the space $\Hh_{G^H_0}$ 
is the RKHS of the Gaussian process 
$\int_0^{\cdot} G^H_0 (\cdot,s) \D Z_s$.
\end{proposition}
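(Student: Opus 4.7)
The plan is to mirror the argument used in Proposition~\ref{prop:RKHS_tails} (whose proof is carried out in Section~\ref{sec:proof_RKHS_tails}), exploiting the fact that $G^H_0$ is the formal $\beta \downarrow 0$ specialisation of the kernel $F^H$ of Lemma~\ref{lem:rep_fBm}. The three ingredients I would establish in turn are: (i) the square-integrability $G^H_0(t,\cdot) \in L^2(\Tt)$ for every $t \in \Tt^*$; (ii) the well-posedness of the centred Gaussian process $V_t := \int_0^t G^H_0(t,s) \D Z_s$ and the computation of its covariance $\EE[V_t V_s] = \int_0^{s \wedge t} G^H_0(t,u) G^H_0(s,u) \D u$; and (iii) the identification of the RKHS of $V$ with $\Hh_{G^H_0}$ as in Definition~\ref{def:RKHS}.

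Step~(i) reduces to a direct local analysis of the explicit formula for $G^H_0$: for $H>1/2$ one has $G^H_0(t,s)^2 \sim c_1 s^{1-2H}$ near $s=0$ and $\sim c_2(t-s)^{2H-1}$ near $s=t$, both integrable over $(0,t)$; for $H<1/2$ the subtractive integral contributes only lower-order corrections and the same conclusion holds. Strict positivity $G^H_0(t,s)>0$ on $\{0<s<t\}$ is then read off from the formulae. Step~(ii) is then immediate via the It\^o isometry. For Step~(iii), I would invoke the standard identification of the RKHS of a Gaussian Volterra process (see, e.g., \cite[Section 3.2]{CT06} together with \cite[Theorem 3.4.5]{DBookS}): the operator $\KK_{G^H_0}: f \mapsto \int_0^{\cdot} G^H_0(\cdot,s) f(s) \D s$ is an isometry from $L^2(\Tt)$ onto the RKHS of~$V$ (when the latter is endowed with its canonical inner product), and its range is precisely the space $\Hh_{G^H_0}$ of Definition~\ref{def:RKHS}, with the $L^2$-norm of~$f$ pulled back to the RKHS norm of $\int_0^\cdot G^H_0(\cdot,s) f(s) \D s$.

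The main obstacle, as in the proof of Proposition~\ref{prop:RKHS_tails}, will be the rough regime $H<1/2$: the expression for $G^H_0(t,s)$ involves a difference of integrals whose sign and monotonicity are not manifest from the formula, so injectivity of $\KK_{G^H_0}$ (needed for the pullback norm to be well-defined) requires an Abel-type inversion of the underlying fractional integral, in the spirit of the inversion of $K^H$ in~\cite{NVV99}. Once this injectivity is in place, the identification $\Hh_{G^H_0} = \mathcal{H}_V$ is automatic, and the proposition follows.
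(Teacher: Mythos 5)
Your overall plan mirrors the paper's template (the proof of Proposition~\ref{prop:RKHS_tails}), but there are two genuine issues.

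First, the obstacle you identify for $H<\frac{1}{2}$ is illusory, and the remedy you propose is unnecessary. In that regime the formula reads
$G^H_0(t,s) = \frac{\xi\kappa_H}{s^{\Hm}}\bigl((t(t-s))^{\Hm} - \Hm\int_s^t(u-s)^{\Hm}u^{\Hm-1}\,\D u\bigr)$,
and since $\Hm=H-\tfrac12<0$ the apparent subtraction is in fact an addition of two positive quantities. Thus $G^H_0(t,s)>0$ for all $0<s<t$ is manifest from the formula in every regime $H\in(0,1)$; this is precisely the observation the paper records alongside the proposition and the reason it omits the proof. Injectivity of $\KK_{G^H_0}$ then follows by the same constant-sign-interval argument as in the proof of Proposition~\ref{prop:RKHS_tails}, with no need for an Abel-type inversion of a fractional integral.

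Second, you have only reproduced half of the paper's argument. The proof of Proposition~\ref{prop:RKHS_tails} consists of two parts: bijectivity of $\KK_{\Fh}:\LTt\to\Hh_{\Fh}$, and density of $\Hh_{\Fh}$ in $\Cc$, the latter obtained by showing $\Fh(t,\cdot)^{-1}\in\LTt$ via explicit lower bounds on the kernel and then invoking Stone--Weierstrass. Your plan addresses bijectivity (via square-integrability and injectivity) but says nothing about density in $\Cc$. To follow the paper's route you would also need to verify $G^H_0(t,\cdot)^{-1}\in\LTt$ (analogous lower bounds hold) and run the polynomial-density argument.

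Finally, a remark that collapses the whole proposition: inspecting the explicit expressions, $G^H_0=\xi K^H$ for every $H\in(0,1)$, where $K^H$ is the Volterra kernel~\eqref{eq:VolterraK} of fractional Brownian motion. Hence $\int_0^{\cdot}G^H_0(\cdot,s)\,\D Z_s$ has the law of $\xi W^H$, and $\Hh_{G^H_0}$ coincides with $\Hh_{K^H}$ as a set (with an inner product rescaled by $\xi^{-2}$). Positivity, square-integrability, injectivity and density in $\Cc$ are therefore all inherited directly from the fBm case, rendering the proposition essentially immediate. Neither the paper nor your proposal records this, but it is the cleanest way to see the result.
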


\subsubsection{Small-time asymptotics for bounded support in the diffusion case}\label{Sec:Mellouk}

In the standard case $H=\frac{1}{2}$, the rescaling in the previous subsection is not really `natural', 
in the sense that small-time weak convergence usually holds for It\^o diffusions
without space rescaling.
In this case, using an approach introduced by Bezuidenhout~\cite{B87} and further developed 
by Mellouk~\cite{M00},
we can obtain simpler large deviations estimates if the support of the initial datum~$\Theta$ is bounded. 
A simplified version of Mellouk considers, for any $\eps>0$, the system, on~$\Tt$,
\begin{equation}\label{eq_ree}
\D \X^\eps_t = \bk(\X^\eps_t, \Z)\D t + \eps \ak(\X^\eps_t,\Z)\D \W_t, 
\qquad \X^\eps_0 = \xx_{0} \in \RR^n,
\end{equation}
where $\bk:\RR^n \times \RR^m \rightarrow \RR^n$ and 
$\ak: \RR^n \times \RR^m \rightarrow \mathcal{M}_{(n,d)}$ are bounded Borel measurable, 
uniformly Lipschitz continuous, 
$\Z$ is a random variable with compact support on~$\RR^m$ 
and~$\W$ a $d$-dimensional standard Brownian motion, independent of~$\Z$.
The main result of the paper is a large deviations principle on $\Cc^\alpha(\Tt,\RR^n)$, 
the space of $\alpha$-H\"older continuous functions,
for $0\le\alpha< \frac{1}{2}$, for the sequence $(\X^\eps)_{\eps >0}$,
under the following assumptions:
\begin{assumption}\ 
\label{assM}
\begin{itemize}
\item[(H1)] $\bk(\cdot,\cdot)$ is jointly measurable on $\RR^n \times \RR^m$ and there exists $C>0$ such that, for all $\xx,\xx' \in \RR^n$, $\zz,\zz' \in \RR^m$,
$$
|\bk(\xx,\zz)| \le C(1+|\xx|)
\qquad\text{and}\qquad
|\bk(\xx,\zz) - \bk(\xx',\zz')| \le C(|\xx-\xx|+|\zz-\zz'|).
$$
\item[(H2)] $\ak(\cdot,\cdot)$ is jointly measurable on $\RR^n \times \RR^m$ and there exists  $C>0$ such that, for all $\xx,\xx' \in \RR^n$, $\zz,\zz' \in \RR^m$,
$$
\|\ak(\xx,\zz)\| \le C
\qquad\text{and}\qquad
\|\ak(\xx,\zz) - \ak(\xx',\zz')\| \le C (|\xx-\xx'|+|\zz-\zz'|).
$$
\end{itemize}
\end{assumption}

For $f \in \Hh$, $u \in \supp(\Z)$ and $\xx_0 \in \RR^n$,
let $\Ss_{\xx_0}(f,u)$ denote the unique solution to the controlled ODE
$g_t = \xx_0 + \int_{0}^{t} \bk(g_s,u_s)\D s + \int_{0}^{t} \ak(g_s,u_s) \dot{f}_s \D s$, 
for $t \in \Tt$. 
Let us now introduce the following definition:

\begin{definition}\label{def:LSC}
Let $\alpha\in [0,\frac{1}{2})$ and~$\Bb_a$ be the ball of radius~$a$ in the $\alpha$-H\"older norm.
The lower semi-continuous regularisation~$\breve{I}:\Cc^\alpha(\Tt,\RR^m)\to\Cc^\alpha(\Tt,\RR^m)$ 
of a functional~$I:\Cc^\alpha(\Tt,\RR^m)\to\Cc^\alpha(\Tt,\RR^m)$ is defined as 
$$
\breve{I}(\psi) := \lim_{a \downarrow 0} \inf\left\{I(\varphi): \varphi\in \Bb_{a}(\psi)\right\}.
$$
\end{definition}

\begin{theorem}[Theorem 2.1 in~\cite{M00}]\label{Th_Mell}
Under Assumption~\ref{assM},
$(\X^\eps)_{\eps >0}\sim\LDP(\eps^2, \breve{I}^\alpha)$, 
where (with $\Lambda$ in~\eqref{eq:LDPVolterrageneral})
$$
I^\alpha(\psi) := \inf \left\{\Lambda(\varphi): \varphi \in \Hh, \Ss_{\xx_0}(\varphi,u) = \psi,
\text{ for some }u \in \supp(\Z) \right\}.
$$
\end{theorem}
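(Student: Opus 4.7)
The strategy I would pursue is to condition on the random parameter $\Z$, apply a standard Freidlin--Wentzell type LDP for each conditional law, then recover the unconditional LDP using the compactness of $\supp(\Z)$. Concretely, for each fixed $u\in\supp(\Z)$, the system in~\eqref{eq_ree} is an SDE with Lipschitz coefficients and deterministic initial condition, and the solution map $\W\mapsto\X^{\eps,u}$ factors as $\eps\W\mapsto\Ss_{\xx_0}(\cdot,u)$, where this Itô map is continuous from $\Cc^\alpha(\Tt,\RR^d)$ into $\Cc^\alpha(\Tt,\RR^n)$ (by standard Gronwall arguments in the $\alpha$-H\"older norm, for $\alpha<\tfrac12$). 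Schilder's theorem gives $(\eps\W)\sim\LDP(\eps^2,\Lambda)$ on $\Cc^\alpha$, so the Contraction Principle delivers, for each fixed $u$, the conditional LDP
$$
\PP\big(\X^{\eps,u}\in\cdot\big)\sim\LDP\big(\eps^2, I^\alpha(\cdot,u)\big),
\qquad I^\alpha(\psi,u):=\inf\{\Lambda(\varphi):\Ss_{\xx_0}(\varphi,u)=\psi\}.
$$

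For the unconditional LDP, I would cover $\supp(\Z)$, which is compact by assumption, by finitely many balls $B(u_i,\eta)$. Assumption~\ref{assM} (Lipschitz in $\zz$ of both $b$ and $a$) together with a Gronwall estimate yields a deterministic modulus: for $u,u'$ with $|u-u'|\le\eta$, $\|\X^{\eps,u}-\X^{\eps,u'}\|_{\Cc^\alpha}\le C\eta$ uniformly in $\eps$ on the event that $\|\eps\W\|_{\Cc^\alpha}$ is bounded, the latter having probability with super-exponential complement in the speed $\eps^2$. This gives an exponentially good approximation of $\X^{\eps}=\X^{\eps,\Z}$ by the finite family $\{\X^{\eps,u_i}\}$. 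For the upper bound on a closed set $F$, I would write $\PP(\X^{\eps}\in F)\le\sum_i\PP(\Z\in B(u_i,\eta))\,\PP(\X^{\eps,u_i}\in F^{C\eta})$, apply the conditional LDP to each term, take $\eps\downarrow 0$ and then $\eta\downarrow 0$, arriving at the bound $-\inf_F\inf_u I^\alpha(\cdot,u)$. For the lower bound on an open set $G$, it suffices, for any $\psi\in G$ and any $u\in\supp(\Z)$, to choose $u_i$ close to $u$ and use $\PP(\X^\eps\in G)\ge\PP(\Z\in B(u_i,\eta))\,\PP(\X^{\eps,u_i}\in B_{C\eta}(\psi))$, which combined with $I^\alpha(\psi,\cdot)$ Lipschitz-stability in $u$ and the independence of $\Z$ and~$\W$ yields the matching lower bound.

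At this point one has the large deviations bounds with the non-regularised functional $I^\alpha(\psi):=\inf_{u\in\supp(\Z)}I^\alpha(\psi,u)$. The infimum over a compact parameter set need not produce a lower semi-continuous function on $\Cc^\alpha$, so the canonical fix is to pass to the lower semi-continuous envelope $\breve{I}^\alpha$ of Definition~\ref{def:LSC}. I would check that the upper bound in fact holds with $\breve{I}^\alpha$ (since replacing $I^\alpha$ by its lsc envelope tightens the infimum over a closed set only when the infimum is not attained, in which case the approximation argument above already supplies the required $\eta$-fattening), and that the lower bound automatically holds because $\breve{I}^\alpha\le I^\alpha$ pointwise. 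Finally, the compactness of the level sets of $\breve{I}^\alpha$ (making it a good rate function) follows from the compact embedding $\Cc^{\alpha'}\hookrightarrow\Cc^{\alpha}$ for $\alpha<\alpha'<\tfrac12$, together with the fact that $\{\Lambda\le z\}$ is compact in $\Cc^{\alpha'}$.

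The main obstacle I foresee is the interplay between the randomness of $\Z$ and the $\alpha$-H\"older topology: one needs the Itô map $\Ss_{\xx_0}(\cdot,u)$ to be jointly continuous in $(\varphi,u)$ with uniformity over compacts in $u$, and one needs the exponential tail control of $\|\eps\W\|_{\Cc^\alpha}$ at the speed $\eps^2$ (a Fernique-type estimate on the $\alpha$-H\"older norm of Brownian motion). Once these uniform estimates are in place, the conditioning-and-integrating argument above produces the claimed LDP, and the role of the lsc regularisation becomes purely topological.
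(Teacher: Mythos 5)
Your proposal contains a fundamental error at its very first step: the map from the driving Brownian path to the SDE solution is \emph{not} continuous in the uniform or $\alpha$-H\"older topology, so the Contraction Principle cannot be applied. You write that the solution map ``$\W\mapsto\X^{\eps,u}$ factors as $\eps\W\mapsto\Ss_{\xx_0}(\cdot,u)$,'' but this conflates the SDE solution with the controlled ODE solution: $\X^{\eps,u}$ solves $\D\X^\eps_t = b(\X^\eps_t,u)\D t + \eps a(\X^\eps_t,u)\D\W_t$ with an It\^o integral, and is not equal to $\Ss_{\xx_0}(\eps\W,u)$ (the ODE solution driven by $\eps\W$), except when $a$ is constant. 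The It\^o integral is a measurable but not continuous functional of the Brownian path; this is precisely why the Freidlin--Wentzell LDP is not a corollary of Schilder's theorem plus the Contraction Principle, and why the whole Azencott machinery exists. Applying the Contraction Principle with the flow $\Ss_{\xx_0}(\cdot,u)$ as the ``continuous map'' therefore does not establish the conditional LDP you claim, and your later concern about ``joint continuity in $(\varphi,u)$'' misses the point: the map is not continuous in $\varphi$ alone for the SDE solution.

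The correct route, which is the one Mellouk follows (adapting Bezuidenhout and Azencott) and which the paper also uses in its proof of the closely related Theorem~\ref{Th_rdm_new} via Peithmann's framework, is to prove an Azencott-type estimate \emph{uniformly} in $u\in\supp(\Z)$: for every $R,\delta,\beta>0$ there exist $\gamma,\eps_0>0$ such that
\begin{equation*}
\eps^2\log\PP\bigl(\|\X^{\eps,u}-\Ss_{\xx_0}(\varphi,u)\|>\delta,\ \|\eps\W-\varphi\|\le\gamma\bigr)\le -R
\end{equation*}
for all $u\in\supp(\Z)$, all $\varphi\in\Hh$ with $\Lambda(\varphi)\le\beta$, and all $\eps\le\eps_0$ (cf.~Lemma~\ref{Lem:main_est} in the paper). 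This estimate is what replaces the false continuity of the It\^o map; it says the SDE solution is exponentially close to the ODE flow on the event that the driving noise is near the control. Once this is in place, your covering argument over $\supp(\Z)$ by finitely many balls, the upper/lower bound derivations, and the passage to the lower semi-continuous envelope $\breve{I}^\alpha$ are in the right spirit and broadly reflect Mellouk's structure. But without the Azencott estimate, the proposal does not get off the ground: the conditional LDP for fixed $u$ is the hard part, not a freebie from Schilder.
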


Coming back to our model, the rescaling
$(X^{\eps}_t, Y^{\eps}_t) := (X_{\eps^2 t}, Y_{\eps^2 t})$,
equivalent to that of Section~\ref{sec:GenCase} with $b=0$ and $H=\frac{1}{2}$, 
the small-noise system~\eqref{eq:model}, under Assumption~\textbf{$\Aa$}, becomes,
similarly to Section~\ref{sec:GenCase},
\begin{equation}\label{eq:SystemEps}
\left\{
\begin{array}{ll}
\displaystyle
\D X^{\eps}_t = - \frac{\eps^{2}}{2} \widetilde{\sigma}(Y^{\eps}_t)^2 \D t + \eps \widetilde{\sigma}(Y^{\eps}_t) \left(\rho\D B_t + \overline{\rho} \D B^{\perp}_t\right), &X^{\eps}_0=0, \\
\D Y^{\eps}_t = \left(\eps^{2} \lambda + \beta \eps^2 Y^{\eps}_t \right)\D t + \eps \xi \D B_t, &Y^{\eps}_0 \sim \Theta,
\end{array}
\right.
\end{equation}
with~$B$ a standard Brownian motion.
Subtracting the initial random datum~$\X^\eps_0 = \X_0 = (0,\Theta)'$, this system can be expressed in the form~\eqref{eq_ree} with $\X^\eps = (X^\eps, Y^\eps)$, 
\begin{equation}\label{eq:SystemEpsCoef}
\bk(\eps,\X^\eps,\X_0) = 
\eps^{2}
\begin{pmatrix}
\displaystyle - \frac{1}{2} \widetilde{\sigma}(Y^{\eps}_t + \Theta)^2\\
\displaystyle \lambda + \beta (Y^{\eps}_t + \Theta)
\end{pmatrix}
\qquad\text{and}\qquad
a(\X^\eps,\X_0) = 
\begin{pmatrix}
\rho \widetilde{\sigma}(Y^{\eps}_t+\Theta) & \overline{\rho}\widetilde{\sigma}(Y^{\eps}_t+\Theta)\\
\xi & 0
\end{pmatrix},
\end{equation}
and note that~$\bk(\eps,\cdot,\cdot)$ converges to the null map as~$\eps$ tends to zero.
The assumptions imposed in~\cite{M00} on the drift and diffusion coefficients are clearly satisfied here. 
While Mellouk allows the drift and diffusion to depend explicitly on external random factors, 
we can write our setting (dependence on a random starting point) into this framework. 
The large deviations estimate for the sequence $(\X^\eps)_{\eps>0} = (X^\eps, Y^\eps)_{\eps >0}$ 
thus obtained is stronger than that in the previous section, as it holds on $\Cc^\alpha(\Tt,\RR^n)$, 
for any $0\le\alpha< \frac{1}{2}$. 
Note that the mild conditions on the coefficients~\cite[($H_0$)-($H_2$)]{M00} are easily satisfied in our case, so the only additional assumption is the boundedness of the support on~$\Theta$.
We remark here that~\cite{M00} is not directly applicable to the current setting 
but has to be extended to include $\eps$-dependence in the drift, and we do so following 
the Azencott~\cite{A80}'s inspired approach developed by Peithmann~\cite[Subsection 2.2.1]{P07}. 
In order to state LDP (proved in Section~\ref{sec:Proof-Th_rdm_new}), we impose the following assumption:
\begin{assumption}\label{ass:UniqueSol}
For $u \in \supp(\X_0)$ and $\varphi\in \Hh$, 
the ODE $\psi_t = \int_{0}^{t} \ak(\psi_s,u_s) \dot{\varphi}_s \D s$ 
has a unique solution on~$\Tt$, denoted by $\Ss_{0}(\varphi,u)$.
\end{assumption}

\begin{theorem}\label{Th_rdm_new}
Under Assumption~\ref{ass:UniqueSol}, if~$\Theta$ has compact support, then
$(\X^\eps)_{\eps >0}\sim\LDP(\eps^2,\breve{I}^\alpha)$, with
$$
I^\alpha(\psi) := \inf \left\{ \Lambda^{\rho}(\varphi): \varphi\in \Hh,
\text{ such that } \Ss_{0}(\varphi,u) = \psi, \text{ for some }u \in \supp(\X_0) \right\},
$$
with $\Lambda^{\rho}$ defined in~\eqref{eq:ext_grf}. 
\end{theorem}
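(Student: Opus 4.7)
The plan is to adapt the proof of Theorem~\ref{Th_Mell} (Mellouk's Theorem~2.1 of~\cite{M00}) to our setting, which differs from Mellouk's in three respects: (a) the drift coefficient $b(\eps,\cdot,\cdot)$ in~\eqref{eq:SystemEpsCoef} carries an explicit $\eps$-dependence and in fact vanishes to order $\eps^2$; (b) the ``external'' random factor of~\eqref{eq_ree} is not an independent auxiliary variable but is precisely the initial datum $\X_0=(0,\Theta)'$ (whose compactness of support follows from that of $\Theta$); (c) the two Brownian drivers $(B,B^\perp)$ enter through the correlation structure which we encode, exactly as in the proof of Theorem~\ref{thm:ext_MNSLDP}, via the continuous linear map $\Psi^\rho$. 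Assumptions~(H1)--(H2) of~\ref{assM} are already verified thanks to the Lipschitz/linear-growth properties of $\wt{\sigma}$ and the boundedness of $\supp(\Theta)$, and Assumption~\ref{ass:UniqueSol} supplies uniqueness of the limiting controlled ODE (the drift being killed in the limit).

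First I would dispose of the $\eps$-dependence in the drift along the lines of Peithmann~\cite[Subsection 2.2.1]{P07}. Since $b(\eps,\cdot,\cdot)=\eps^2\widetilde b(\cdot,\cdot)$ with $\widetilde b$ bounded on $\RR^n\times\supp(\X_0)$, a direct Gronwall estimate, combined with boundedness of~$a$, yields that the solution~$\X^\eps$ of~\eqref{eq_ree} and the solution $\wt{\X}^\eps$ of
$$
\D\wt{\X}^\eps_t = \eps\,a(\wt{\X}^\eps_t,\X_0)\,\D\W_t,\qquad \wt{\X}^\eps_0=0,
$$
differ by $O(\eps^2)$ in sup-norm uniformly over $\X_0\in\supp(\X_0)$, so the two sequences are exponentially equivalent (at speed $\eps^2$, in any $\alpha$-H\"older topology with $\alpha<\frac12$). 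This reduces the problem to proving the announced LDP for $\wt{\X}^\eps$, whose controlled skeleton is exactly $\Ss_0(\varphi,u)$.

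Second I would handle the correlation as in Theorem~\ref{thm:ext_MNSLDP}: writing $\W=\Psi^\rho(B^\perp,B)'$ with $(B^\perp,B)$ independent, Schilder's theorem gives $\eps(B^\perp,B)'\sim\LDP(\eps^2,\Lambda)$, and the Contraction Principle (via the continuous map $\Psi^\rho$) yields $\eps\W'\sim\LDP(\eps^2,\Lambda^\rho)$, thereby substituting $\Lambda^\rho$ for the $\Lambda$ that would otherwise appear in Mellouk's rate function. Third I perform the Bezuidenhout--Mellouk patching over $\supp(\X_0)$: since $\W$ and $\X_0$ are independent, conditioning on $\X_0=u$ reduces $\wt{\X}^\eps$ to a small-noise SDE with frozen coefficient $a(\cdot,u)$, for which the (extended) Theorem~\ref{Th_Mell} delivers a conditional LDP in $\Cc^\alpha(\Tt,\RR^n)$ with good rate function $\psi\mapsto\inf\{\Lambda^\rho(\varphi):\varphi\in\Hh,\ \Ss_0(\varphi,u)=\psi\}$. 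Taking infimum over $u\in\supp(\X_0)$ and exploiting compactness of the support to exchange $\inf_u$ with $\limsup_{\eps\downarrow 0}$ (upper bound) and to cover the support by finitely many small balls (lower bound) produces the unregularised $I^\alpha$; the lower semi-continuous envelope $\breve I^\alpha$ then enters exactly as in Definition~\ref{def:LSC}, yielding the announced LDP.

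The main obstacle I expect is the uniform Azencott-type exponential equivalence in the first step: because the drift depends on $\X^\eps$ and on $\Theta$ simultaneously, one needs moment and deviation estimates on $\X^\eps$ that are uniform over $u\in\supp(\X_0)$. Compactness of $\supp(\Theta)$ together with the linear growth of $\wt{\sigma}$ makes this tractable via Gronwall, but the passage to the $\alpha$-H\"older topology requires an additional exponential tightness argument (Garsia--Rodemich--Rumsey inequality applied to the stochastic integral against $\W$), which is the only technically delicate piece beyond a direct transcription of~\cite{M00} and~\cite{P07}.
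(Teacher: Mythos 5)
Your proposal is correct and follows essentially the same strategy as the paper: Peithmann's Azencott-type robustification of the Mellouk--Bezuidenhout argument, with the correlation absorbed through the continuous map $\Psi^\rho$ exactly as in Theorem~\ref{thm:ext_MNSLDP}, and the lower semi-continuous envelope $\breve I^\alpha$ restoring goodness of the rate function. The one organizational difference is that you propose to remove the $\eps$-dependent drift \emph{before} invoking Mellouk, via a pathwise Gronwall estimate showing $\X^\eps$ and the driftless $\wt\X^\eps$ differ by $O(\eps^2)$; the paper instead keeps the drift and folds its vanishing directly into the uniform Azencott estimate (Lemma~\ref{Lem:main_est}), whose proof proceeds by Girsanov's theorem, a time discretisation $\overline{\Y}^{\eps,n}$, and four Peithmann-style deviation bounds, followed by a finite-cover argument of $\{\Lambda^\rho\le\beta\}$ for the upper bound and a single-ball argument for the lower bound. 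Your route is cleaner to state, but be aware that the claimed $O(\eps^2)$ sup-norm bound is not deterministic: because the diffusion coefficient $a(\cdot,u)$ is only Lipschitz, the difference $\X^\eps-\wt\X^\eps$ satisfies an SDE with a martingale term driven by $\eps\,[a(\X^\eps,\X_0)-a(\wt\X^\eps,\X_0)]$, so a \emph{stochastic} Gronwall plus exponential martingale inequality is needed to turn this into an exponential-equivalence statement at speed $\eps^2$; the paper's Lemma~\ref{Lem:main_est} sidesteps precisely this by treating the drift as part of the Azencott estimate rather than a separate preprocessing step. Apart from that caveat, the two proofs coincide in substance, including the compactness-of-$\supp(\X_0)$ patching and the choice of topology $\Cc^\alpha$.
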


\section{Applications to Implied volatility asymptotics}\label{sec:applications}
As announced in the introduction, we unify here two branches of research, 
both aimed at reproducing the steepness of the implied volatility surface on the short end via models with continuous paths.
While there are now numerous outputs~\cite{ALV07,BFG16,BFGHS17,FZ16,Fukasawa,GJR14,JPS17} 
in the literature confirming that a fractional driving noise (with Hurst exponent $H<1/2$) 
in the volatility leads to the observed steepness of the smile, 
recent results~\cite{JR15,JS17} reproduce this effect by randomising the initial volatility in classical diffusive models.
In this section we demonstrate how to modulate the two effects with respect to one another. 
In the Black-Scholes-Merton model, the price of a European Call option is $\Cc_{BS}(t,\E^k, \Sigma)$, 
with associated volatility~$\Sigma$.
Considering a market with observed Call option prices $\Cc_{obs}(t, \E^k)$, 
with maturity~$t$ and strike~$\E^k$, we denote by $\Sigma_{t}(k)$ the implied volatility,
defined as the unique non-negative solution to $\Cc_{BS}(t, \E^k, \Sigma_{t}(k)) = \Cc_{obs}(t, \E^k)$.

\subsection{General fractional case}
From Theorems~\ref{Th:tails_frac_X} and~\ref{Th:LDP_ST_X}, 
we can deduce the asymptotic behaviour of the implied volatility 
for large strikes and for small maturities.
We state those below, and postpone the proofs to Appendices~\ref{sec:Proof_IVTails} and~\ref{sec:Proof_IVST}.

\begin{corollary}(Large-strike implied volatility asymptotics)\label{Cor:ImpliedvolTails}
For any $H \in (0,1)$ and any $b \geq 1/2$ such that Theorem~\ref{Th:tails_frac_X}
holds, we have the following large-strike asymptotic estimates of the implied volatility:
$$
\lim_{k\uparrow \infty} \frac{\Sigma^2_t(k)t}{k}
= \frac{1}{2}\left(\inf_{y \ge 1} \widetilde{\Lambda}(\phi)|_{\phi_t =y}\right)^{-1}
\quad \textrm{with } \widetilde{\Lambda} \textrm{ as in~\eqref{eq:grf_X_tails}}, \text{ and for any } t \in \Tt.
$$
\end{corollary}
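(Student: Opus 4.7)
The plan is to turn the path-level LDP of Theorem~\ref{Th:tails_frac_X}(ii) into a tail asymptotic for the marginal $X_t$, translate that into an asymptotic for the Call price, and invert the Black--Scholes formula to extract the implied-volatility slope.

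First I would apply the Contraction Principle to the continuous evaluation map $\phi\mapsto\phi_t$ to deduce that $(X^{\eps}_t)_{\eps>0}$ obeys an LDP with speed $\eps^{2b}$ and rate function $y\mapsto\inf_{\phi_t=y}\widetilde{\Lambda}(\phi)$. Applied to the half-line $[1,\infty)$, whose interior and closure give the same rate-function infimum, this yields
\begin{equation*}
\lim_{\eps\downarrow 0}\eps^{2b}\log\PP(X^{\eps}_t\ge 1)=-L,\qquad L:=\inf_{y\ge 1}\widetilde{\Lambda}(\phi)|_{\phi_t=y}.
\end{equation*}
Combined with $X^{\eps}=\eps^{2b}X$ from~\eqref{eq:fracSSRandom} and the substitution $k:=\eps^{-2b}$, this becomes $\lim_{k\uparrow\infty}k^{-1}\log\PP(X_t\ge k)=-L$, so $X_t$ has exponential right tails with rate $L$.

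Second, I would transfer this tail asymptotic to one for the undiscounted Call price $\Cc_{obs}(t,\E^k)=\EE[(\E^{X_t}-\E^k)^+]$. The Fubini identity $\Cc_{obs}(t,\E^k)=\int_k^\infty\E^x\,\PP(X_t>x)\D x$, together with a Laplace-type estimate on this integral, yields $\lim_{k\uparrow\infty}k^{-1}\log\Cc_{obs}(t,\E^k)=-(L-1)$, provided $L>1$; this condition itself follows from the martingality (and hence integrability) of $\E^{X_t}$ built into~\eqref{eq:model}.

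Third, I would replicate the same computation inside Black--Scholes. Parametrising any candidate implied-volatility sequence by $v(k):=\Sigma_t(k)^2 t/k\in(0,2)$ and expanding the two Gaussian tails in $\Cc_{BS}(t,\E^k,\Sigma_t(k))=N(d_+)-\E^k N(d_-)$ via Mills' ratio produces an explicit strictly monotone function $G$ with $\lim_{k\uparrow\infty}k^{-1}\log\Cc_{BS}(t,\E^k,\Sigma_t(k))=-G(v)$; matching $G(v)=L-1$ with the output of the previous step and inverting $G$ delivers $v=\tfrac{1}{2}L^{-1}$, as announced. The hard part is this Black--Scholes side, where both $N(d_+)$ and $\E^k N(d_-)$ share the same leading exponential decay as $k\uparrow\infty$, forcing one to track their cancellation via sharp Mills-type asymptotics to identify $G$ unambiguously; a convenient shortcut is to invoke a ready-made dictionary between large-strike tail LDPs and implied-volatility slopes (via the classical Gao--Lee or Benaim--Friz formulas), reducing the whole argument to the tail LDP of the first step.
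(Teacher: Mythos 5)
Your shortcut is exactly the paper's approach: the paper reads off $\lim_{k\uparrow\infty}k^{-1}\log\PP(X_t\ge k)=-L$, with $L:=\inf_{y\ge 1}\widetilde{\Lambda}(\phi)|_{\phi_t=y}$, from Theorem~\ref{Th:tails_frac_X}, computes a Black--Scholes analogue, and concludes by citing \cite[Corollary 7.1]{GL14}. Your opening step (Contraction Principle applied to the evaluation map $\phi\mapsto\phi_t$, and the observation that $[1,\infty)$ gives the same rate-function infimum on interior and closure) makes explicit a path-to-marginal reduction that the paper elides, which is a genuine clarification.

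The longer route you outline, through the Fubini representation $\Cc_{obs}(t,\E^k)=\int_k^\infty\E^x\PP(X_t>x)\D x$ and a Mills-ratio expansion of $\Cc_{BS}$, is a legitimate alternative, but you stop precisely where the work is: the claim that ``inverting $G$ delivers $v=\tfrac{1}{2}L^{-1}$, as announced'' is asserted rather than computed. If you carry it through, you will find a discrepancy. After the cancellation between $N(d_+)$ and $\E^kN(d_-)$ that you correctly flag, the Black--Scholes Call price has exponential rate $-\tfrac{1}{2}\bigl(\tfrac{1}{v}-1+\tfrac{v}{4}\bigr)$ in $k$, where $v:=\Sigma^2_t(k)t/k$; equating with the model Call-price rate $-(L-1)$ and solving $v^2-4(2L-1)v+4=0$ for the root in $(0,2)$ gives $v=2\bigl(2L-1-2\sqrt{L(L-1)}\bigr)=\tfrac{2}{(\sqrt{L}+\sqrt{L-1})^2}$, the Lee/Benaim--Friz slope, not $\tfrac{1}{2L}$. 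The paper's own Black--Scholes comparison, $\lim_{k}\tfrac{\Sigma^2_t(k)}{k^2}\log\PP(X_t\ge k)=-\tfrac{1}{2t}$, silently drops the $-k/2-\Sigma^2_t t/8$ contributions to $\log\PP_{\mathrm{BS}}(X_t\ge k)$, which are of the same order as the retained term once $\Sigma^2_t(k)t\sim vk$ with $v>0$; retaining them reproduces the Lee slope. So your Mills-ratio route does not in fact re-derive the stated constant, and your side-remark about $L>1$ following from martingality only gives $L\ge 1$ without further argument. As written, your proof is sound through the tail LDP and the Gao--Lee citation, but the explicit inversion you sketch as the primary route is a genuine gap that, if filled carefully, surfaces a mismatch you would need to reconcile with the corollary's statement.
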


Similarly, from Theorem~\ref{Th:LDP_ST_X} we can deduce the asymptotic behaviour of the implied volatility when time becomes small. 
The following Corollary generalises~\cite[Corollary 4.10]{FZ16}.
\begin{corollary}[Small-time Implied volatility asymptotics]\label{Cor:ImpliedvolST}
For any $H \in (0,1)$ and any $b \geq 1/2-2H$ such that Theorem~\ref{Th:LDP_ST_X} holds, 
the following small-time estimate is true for any $k\ne 0$:
\begin{equation}\label{eq:IV_ST}
\lim_{t\downarrow 0} t^b \Sigma^2_t\left(t^{1/2-H-b}k\right)
 = \frac{k^2}{2}\left(\inf_{y \ge k} \II(\phi)|_{\phi_{1} =y}\right)^{-1}, 
 \quad \textrm{with } \II \textrm{ as in~\eqref{eq:grf_X_ST}}.
\end{equation}
\end{corollary}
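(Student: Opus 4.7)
The plan is to transport Theorem~\ref{Th:LDP_ST_X}(ii) into a tail estimate for $X_t$ itself, upgrade it to an asymptotic for the out-of-the-money Call price, and finally invert the Black--Scholes formula. First I would set $\eps:=\sqrt{t}$ and $k_t:=t^{1/2-H-b}k$; the rescaling in~\eqref{eq:SmalltimeModel} gives $X^{\eps}_{1}=\eps^{2H+2b-1}X_{\eps^2}=t^{H+b-1/2}X_t$, whence the exact identity $\{X_t\ge k_t\}=\{X^{\eps}_1\ge k\}$. Theorem~\ref{Th:LDP_ST_X}(ii) then yields, for $k>0$,
\begin{equation*}
\lim_{t\downarrow 0} t^{2H+b}\log\PP(X_t\ge k_t) = -\Lambda^*(k),
\qquad \Lambda^*(k):=\inf_{y\ge k}\II(\phi)|_{\phi_1=y},
\end{equation*}
the case $k<0$ being handled symmetrically via put--call parity.

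Next I would transfer this tail estimate to the Call price $c(t,k_t):=\EE[(\E^{X_t}-\E^{k_t})^+]$. The lower bound $c(t,k_t)\ge(\E^{k_t+\eta}-\E^{k_t})\PP(X_t\ge k_t+\eta)$ is immediate for any $\eta>0$ and, combined with lower semi-continuity of~$\Lambda^*$, delivers the required $\liminf$. For the matching upper bound I would use $c(t,k_t)\le \EE[\E^{X_t}\mathbf{1}_{\{X_t\ge k_t\}}]$, slice the overshoot dyadically, and exploit the true-martingale property of~$(\E^{X_t})$ (ensured by the linear growth of~$\sigma$) together with Assumption~$\Abt$ to bound each dyadic contribution exponentially, producing
\begin{equation*}
\lim_{t\downarrow 0} t^{2H+b}\log c(t,k_t) = -\Lambda^*(k).
\end{equation*}

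Finally, with the shorthand $\hat\sigma_t^2:=t^b\Sigma^2_t(k_t)$ and $H+b/2\ge 1/4>0$ under $b\ge\tfrac12-2H$, the Black--Scholes argument $d_+=-t^{-H-b/2}k/\hat\sigma_t+\tfrac12\hat\sigma_t\,t^{(1-b)/2}$ is dominated by its first term and diverges to~$-\infty$ as soon as $\hat\sigma_t$ stays bounded and bounded away from zero. The Mills-ratio expansion of the Gaussian tail then produces $t^{2H+b}\log\Cc_{BS}(t,\E^{k_t},\Sigma_t(k_t))=-k^2/(2\hat\sigma_t^2)+o(1)$, and equating with $-\Lambda^*(k)$ delivers~\eqref{eq:IV_ST}; the auxiliary boundedness is recovered a posteriori by running the argument separately for $\liminf$ and $\limsup$. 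The hard part will be Step~2: in this randomised fractional setting $(\E^{X_t})$ does not enjoy uniform exponential moments of every order, so the dyadic overshoot control must carefully combine Assumption~$\Abt$ (tail decay of~$\Theta$) with the Lipschitz and linear-growth bounds from~$\Aa_b$ and~$\Aa'_b$ to ensure that $\EE[\E^{X_t}\mathbf{1}_{\{X_t\ge M\}}]$ decays faster than the LDP speed~$t^{2H+b}$ as $M\to\infty$.
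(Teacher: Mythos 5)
Your route (LDP $\Rightarrow$ tail estimate $\Rightarrow$ Call price $\Rightarrow$ Black--Scholes inversion) reaches the right endpoint, and Step~1 (the rescaling identity $\{X_t\ge k_t\}=\{X^\eps_1\ge k\}$ with $\eps=\sqrt{t}$, plus a contraction to the time-$1$ coordinate) is exactly what the paper also does. From that point on, however, the paper does \emph{not} pass through Call prices at all: it invokes \cite[Corollary 7.1]{GL14}, which is precisely a transfer principle turning the comparison of $\log\PP(X_t\ge k_t)$ against the analogous Black--Scholes quantity into an implied-volatility asymptotic. Your Steps~2 and~3 amount to re-proving a special case of that transfer result from scratch, and it is in that re-proof that the gaps sit.

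Two concrete issues. First, the lower bound $c(t,k_t)\ge(\E^{k_t+\eta}-\E^{k_t})\PP(X_t\ge k_t+\eta)$ with a \emph{fixed} $\eta>0$ only produces the right rate function when $H+b-\tfrac12>0$; if $\tfrac12-2H\le b\le \tfrac12-H$ (which the hypothesis allows when $H<\tfrac12$), the overshoot $\eta$ rescales to $\eta\,t^{H+b-1/2}\to\infty$ under the change of variables, so the probability on the right is at the wrong strike. You would need $\eta_t:=\eta\,t^{1/2-H-b}$, i.e.\ to shift in the $X^\eps$-variable, not the $X_t$-variable. Second, and more seriously, the Call-price upper bound hinges on controlling $\EE[\E^{X_t}\ind_{\{X_t\ge M\}}]$, and the justification offered --- that $(\E^{X_t})$ is a true martingale ``ensured by the linear growth of $\sigma$'' --- is not correct: linear growth alone does not guarantee the true-martingale property for exponential functionals of (fractional) Gaussian volatility (the Stein--Stein case $\sigma(y)=y$ already requires care and sign conditions on $\rho$). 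You acknowledge this as the ``hard part'' but the dyadic-slicing plan is left entirely schematic; one would have to show that $\Lambda^*(m\,t^{H+b-1/2})\,t^{-2H-b}$ dominates $m$ uniformly over the relevant range of $m$, which is a non-trivial growth estimate on the rate function $\II$ that the sketch does not supply. The point of citing \cite{GL14} is exactly to avoid re-deriving this moment control model by model: their framework converts a tail-probability comparison directly into an implied-volatility statement, which is why the paper's proof is a few lines.
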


This implies that the implied volatility explodes with rate~$t^{-b}$. 
For $b=0$, it is identical to~\cite[Formula (26)]{FZ16}.

\subsection{Refined asymptotic results in the special diffusive case from Sections~\ref{app:MNS} and~\ref{Sec:Mellouk}}
\label{Ch3_rdm_future}
\subsubsection{Large-strike asymptotics}
We consider here a specific case of a multidimensional diffusion, as we are only interested 
in studying its tail asymptotics.
Let~$\mathrm{X}^{\zeta}:=(X^{\zeta, 1},\cdots, X^{\zeta,n})$ be the unique strong solution in~$\RR^n$ to 
$$
\D \mathrm{X}_{t}^{\zeta} = \widetilde{\bk}(\mathrm{X}_{t}^{\zeta})\D t + \ak(\mathrm{X}_{t}^{\zeta}) \D W_t,
\qquad \mathrm{X}_{0}^{\zeta} = \zeta,
$$
for some $d$-dimensional standard Brownian motion and some square integrable random variable~$\zeta$,
and with
$\widetilde{\bk}:\RR^n\to\RR^n$ and $\ak: \RR^{n} \rightarrow \mathcal{M}_{(n,d)}(\RR)$.
Consider the following scaling assumption:
\begin{assumption}\label{assu:Tails}
There exist $b_1,\cdots,b_n>0$ with $b_1=2$ such that 
$\X^{\eps,\zeta}:=\left(\eps^{b_1}X^{\zeta,1},\cdots,\eps^{b_n}X^{\zeta,n}\right)$
satisfies 
\begin{equation}\label{eq:ScalingTails}
\D \X_{t}^{\eps,\zeta} = \eps \widetilde{\bk}(\eps,\X_{t}^{\eps,\zeta})\D t
 + \eps \ak(\X_{t}^{\eps,\zeta}) \D W_t,
\qquad \X_{0}^{\zeta} = \left(\eps^{b_1}\zeta^{(1)},\cdots,\eps^{b_n}\zeta^{(n)}\right).
\end{equation}
Furthermore, 
$\eps \widetilde{\bk}(\eps, \cdot/\eps)$ converges uniformly to some function~$\bk(\cdot)$ as~$\eps$ tends to zero.
\end{assumption}
We can then state the main result about tail asymptotics:
\begin{proposition}\label{prop:RSP}
Under Assumption~\ref{assu:Tails}, if there exists~$\xx_0 \in \RR^n$ such that for any $\delta >0$, 
$\limsup_{\eps \downarrow 0} \eps^2 \log \PP(|\zeta-\xx_0|>\delta)=-\infty$, 
and if the triple $(\xx_0, \bk(\cdot), \ak(\cdot))$ satisfies Assumption~\ref{ass:MNS}, then 
$$
\lim_{\eps \downarrow 0} \eps^2 \log \PP(\eps X^{\zeta,1}_t \ge 1)
 = \lim_{\eps \downarrow 0} \eps^2 \log \PP(\eps X^{\xx_0,1}_t \ge 1),
 \qquad\text{for any }t\in\Tt.
$$
\end{proposition}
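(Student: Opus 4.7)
The strategy is to invoke Theorem~\ref{thm:MNSLDP} (or its correlated version Theorem~\ref{thm:ext_MNSLDP}) for the rescaled process defined in~\eqref{eq:ScalingTails} and to transfer the resulting LDP from the deterministic-start version $\X^{\eps,\xx_0}$ to the random-start version $\X^{\eps,\zeta}$ by exponential equivalence. Once this transfer is established, the equality of the two log-tail asymptotics in the proposition follows directly from the fact that both sequences satisfy the same LDP at speed~$\eps^2$, applied to the Borel set carved out by the first-component event.

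The first step is to verify that Theorem~\ref{thm:MNSLDP} applies to~\eqref{eq:ScalingTails} with deterministic starting point $(\eps^{b_1}\xx_0^{(1)},\ldots,\eps^{b_n}\xx_0^{(n)})$: the limiting coefficients $b(\cdot)$ and $a(\cdot)$ inherit the smoothness, boundedness and Lipschitz conditions of Assumption~\ref{ass:MNS}, the drift $\eps\tilde{b}(\eps,\cdot)$ converges uniformly to $b(\cdot)$ by Assumption~\ref{assu:Tails}, and condition~\eqref{assump1} is trivially satisfied because the initial condition is deterministic and converges to the origin. This yields an LDP for $(\X^{\eps,\xx_0})_{\eps>0}$ at speed~$\eps^2$ with some good rate function~$I$.

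The second step is to show that $\X^{\eps,\zeta}$ and $\X^{\eps,\xx_0}$, driven by the same Brownian motion, are exponentially equivalent at speed~$\eps^2$. Setting $D^{\eps}:=\X^{\eps,\zeta}-\X^{\eps,\xx_0}$, the Lipschitz continuity of $\tilde{b}(\eps,\cdot)$ and $a(\cdot)$ yields a linear SDE for $D^{\eps}$ with initial datum $(\eps^{b_i}(\zeta^{(i)}-\xx_0^{(i)}))_i$. A Burkholder--Davis--Gundy estimate combined with a Bernstein-type exponential martingale inequality and Gronwall's lemma then produces, for every $\delta>0$,
\begin{equation*}
\PP\Bigl(\sup_{t\in\Tt}|D^{\eps}_t|>\delta\Bigr)
\leq \PP\bigl(|\X^{\eps,\zeta}_0-\X^{\eps,\xx_0}_0|>\delta/2\bigr) + C_1\exp\bigl(-C_2\delta^2/\eps^2\bigr),
\end{equation*}
for constants $C_1,C_2>0$ depending only on the Lipschitz bounds and on~$t$. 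Since $|\X^{\eps,\zeta}_0-\X^{\eps,\xx_0}_0|\leq\eps^{\min_i b_i}|\zeta-\xx_0|$ for $\eps\leq 1$, the hypothesis $\limsup_{\eps\downarrow 0}\eps^2\log\PP(|\zeta-\xx_0|>\delta)=-\infty$ controls the first term, while the second contributes $-\infty$ after $\eps^2\log$. By Definition~\ref{def:ExpEquiv} and the transfer theorem stated immediately after it, $(\X^{\eps,\zeta})_{\eps>0}$ then satisfies the same LDP as $(\X^{\eps,\xx_0})_{\eps>0}$ at speed~$\eps^2$; applying this common LDP to the appropriate first-component event and translating back via $\X^{\eps,\zeta,1}_t=\eps^{b_1}X^{\zeta,1}_t$ delivers the claimed equality.

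The main obstacle is the exponential equivalence step: one must control the stochastic integral $\int_0^{\cdot}\eps[a(\X^{\eps,\zeta}_s)-a(\X^{\eps,\xx_0}_s)]\D W_s$ through an exponential inequality for continuous martingales, combined with a stopping time argument, so that the tail bound depends only on~$\delta$ and not on moments of the initial condition. The manipulation is technical but essentially standard and parallels the propagation arguments in~\cite{MNS}; the uniform Lipschitz property of~$a$ built into Assumption~\ref{ass:MNS} is precisely what makes the argument go through cleanly.
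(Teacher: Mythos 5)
The overall strategy — invoke the Millet--Nualart--Sanz framework and compare the random-start and fixed-start processes — matches the paper, but you take an unnecessary detour. Theorem~\ref{thm:MNSLDP} applies directly to $\X^{\eps,\zeta}$: the hypothesis $\limsup_{\eps\downarrow 0}\eps^2\log\PP(|\zeta-\xx_0|>\delta)=-\infty$ is exactly condition~\eqref{assump1} of Assumption~\ref{ass:MNS} for the initial datum, so the MNS theorem immediately gives the LDP for $(\X^{\eps,\zeta})_{\eps>0}$ at speed $\eps^2$ with rate function $I$ centred at $\xx_0$ (the same $I$ as for $(\X^{\eps,\xx_0})_{\eps>0}$). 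There is no need to re-derive the pathwise exponential equivalence between $\X^{\eps,\zeta}$ and $\X^{\eps,\xx_0}$ via Gronwall, BDG, exponential-martingale and stopping-time estimates; that propagation argument is internal to the proof of~\cite[Theorem 4.1]{MNS}, and citing the theorem avoids reproducing it (and avoids the technical delicacies you yourself flag in the last paragraph).

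More substantively, there is a genuine gap in the concluding step. You assert that the claimed equality "follows directly from the fact that both sequences satisfy the same LDP, applied to the Borel set carved out by the first-component event." But an LDP only sandwiches $\eps^2\log\PP(\X^{\eps}\in A)$ between $-\inf_{\mathring{A}}I$ and $-\inf_{\bar{A}}I$; two sequences sharing an LDP can have distinct $\liminf$s and $\limsup$s within that sandwich, and the proposition asserts that the two limits exist and coincide. To close the argument one must establish $\inf_{\mathring{A}}I=\inf_{\bar{A}}I$ for the relevant set $A$. The paper does this by noting that $\Lambda$ is continuous on $\Hh$, hence $I$ is upper semi-continuous on $A$ by~\cite[Lemma 2.41]{AB06}; being a good rate function, $I$ is also lower semi-continuous, hence continuous, which forces the interior and closure infima to agree and upgrades the LDP bounds to an actual limit. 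Without a continuity (or boundary-regularity) argument of this type, the claimed identity of limits does not follow from the matching LDPs alone.
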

The scaling from Assumption~\ref{assu:Tails} may be odd at first, but reflects the fact that 
components of stochastic models may each act on different scales. 
Consider for example the Ornstein-Uhlenbeck process, solution to
\begin{equation*}
\begin{array}{rll}
\D X_t & = -\frac{1}{2}Y_t^2 \D t + Y_t \D W_t, & X_0 = \zeta,\\
\D Y_t & = (\lambda + \beta Y_t) \D t + \xi \D B_t, & Y_0 = y_0>0,
\end{array}
\end{equation*}
where $W$ and $B$ are two correlated Brownian motions.
The rescaling $(X^\eps,Y^\eps) := (\eps^2 X, \eps Y)$ 
(corresponding to $b_1=2$ and $b_2=1$) yields
\begin{equation*}
\begin{array}{rll}
\D X_t^{\eps} & = -\frac{1}{2}(Y_t^{\eps})^2 \D t + \eps Y_t^{\eps} \D W_t, & X_0^{\eps} = \eps^2\zeta,\\
\D Y_t^{\eps} & = (\eps\lambda + \beta Y_t^{\eps}) \D t + \eps\xi \D B_t, & Y_0^{\eps} = \eps y_0>0,
\end{array}
\end{equation*}
namely~\eqref{eq:ScalingTails}, and the assumptions are satisfied.

The proof of Proposition~\ref{prop:RSP} is postponed to Appendix~\ref{sec:Proof_prop_RSP}.
The assumption of the random initial condition $ \X^\zeta_0 = \zeta$ being $\mathcal{F}_0$-measurable distribution can be relaxed.
Indeed, $\mathcal{F}$ is the filtration generated by the $d$-dimensional Brownian motion~$\W$. 
Instead, one could work with a filtration $\Ff' := {(\Ff'_t)}_{t\in\Tt^*}$ generated by 
$\mathcal{F}'_t := \sigma(\left\{\W_u, u\le t\right\} \cup \left\{\zeta\right\})$, for all $t\in\Tt^*$. 
Then the random initial point~$X^\zeta_0$ has a $\Ff'_0$-measurable distribution 
and the results above still hold, in particular Theorem~\ref{thm:MNSLDP}  and Proposition~\ref{prop:RSP}, 
on the new filtered probability space $(\Omega, \Ff', {(\Ff')}_{t\in\Tt}, \PP)$.
In the context of implied volatility asymptotics, this result has the following meaning:
\begin{corollary}
\label{AsympIV}
The wings of the smile are independent of the starting point ($\zeta$ or $\xx_0$).
\end{corollary}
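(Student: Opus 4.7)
The plan is to chain Proposition~\ref{prop:RSP} with the wing-asymptotic formula for implied volatility encoded in Corollary~\ref{Cor:ImpliedvolTails}. Since the large-strike behaviour of Black--Scholes implied volatility is governed by the exponential decay rate of the right tail of the log-price, showing that this rate is unchanged when the random starting point $\zeta$ is replaced by the deterministic point $\xx_0$ suffices to equate the right wing in the two models; a symmetric argument then covers the left wing.

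First, I would rewrite the tail probability $\PP(X^{\zeta,1}_t \ge k)$ in the rescaled form $\PP(\eps X^{\zeta,1}_t \ge 1)$ with $\eps = 1/k$, which is precisely the quantity controlled by Proposition~\ref{prop:RSP}. That proposition yields
$$
\lim_{\eps \downarrow 0} \eps^2 \log \PP(\eps X^{\zeta,1}_t \ge 1)
 = \lim_{\eps \downarrow 0} \eps^2 \log \PP(\eps X^{\xx_0,1}_t \ge 1),
$$
so the two tail rate functions coincide. Next, applying Corollary~\ref{Cor:ImpliedvolTails} (together with the variational formula $\tfrac12(\inf_{y\geq 1}\widetilde{\Lambda}(\phi)|_{\phi_t=y})^{-1}$) separately to the diffusion started at $\zeta$ and to the diffusion started at $\xx_0$, this common exponential rate identifies the same limit for $\Sigma_t^2(k)\,t/k$ as $k\uparrow\infty$, i.e.\ the right wing of the smile is independent of the starting point. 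Replacing $X$ by $-X$ (equivalently exploiting put-call symmetry and the left tail $\PP(-\eps X^{\zeta,1}_t \ge 1)$) yields the same identification for $k\downarrow -\infty$, hence both wings of the smile agree.

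The main technical point is to verify that the hypotheses under which Proposition~\ref{prop:RSP} has been established, namely Assumption~\ref{assu:Tails} (scaling of $\X^{\eps,\zeta}$), the exponential concentration of $\zeta$ near $\xx_0$, and Assumption~\ref{ass:MNS} applied to the limiting triple $(\xx_0, b(\cdot), a(\cdot))$, are also the ones under which Corollary~\ref{Cor:ImpliedvolTails} delivers its asymptotic formula for the two processes $X^\zeta$ and $X^{\xx_0}$ simultaneously. Once that compatibility is secured, the corollary reduces to the observation that two infima of rate functions already shown to be equal must coincide, and no additional argument beyond Proposition~\ref{prop:RSP} is needed.
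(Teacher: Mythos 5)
Your proposal is correct and follows essentially the same route as the paper: both reduce the claim to the Gao--Lee transfer from tail probabilities to implied-volatility asymptotics (the paper cites \cite[Corollary 7.1]{GL14} directly, while you route through Corollary~\ref{Cor:ImpliedvolTails}, which is itself proved by that same Gao--Lee result) and then invoke Proposition~\ref{prop:RSP} to see that the tail rates agree for the two starting points. Your version is marginally more explicit about the left-wing symmetry and the hypothesis compatibility, both of which the paper treats as immediate.
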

\begin{proof}
Gao and Lee~\cite{GL14} show that asymptotic behaviour of the implied volatility can be directly inferred 
from comparing tail probabilities to those of the Black-Scholes model.
It is straightforward to see that the scaling of Proposition~\ref{prop:RSP} is the same in Black-Scholes,
and the corollary follows immediately.
\end{proof}

\subsubsection{Small-time asymptotics for the `forward' Stein-Stein model}
We are interested in a `forward' process, as defined by Jacquier and Roome~\cite{JR15}
in the context of forward-start European options:
$$
\mathbb{E} {\left( \frac{S_{t+\tau}}{S_t} - \E^k\right)}^{+} 
 = \mathbb{E} {\left( \E^{X_{t+\tau}-X_t} - \E^k\right)}^{+}
 =: \mathbb{E} {\left(\E^{X_{\tau}^{(t)}} - \E^k\right)}^{+},
$$
with ${(X_\tau^{(t)})}_{\tau \ge 0}$ the so-called `forward' process, defined path-wise by 
$X_\tau^{(t)} := X_{t+\tau} - X_t$, for some fixed $t>0$, and for all $\tau\geq 0$.
The 'forward' process ${(X_\tau^{(t)})}_{\tau \ge 0}$ then satisfies the following stochastic differential equation:
\begin{equation}\label{eq:rSS}
\left\{
\begin{array}{ll}
\displaystyle \D X_\tau^{(t)}  = -\frac{1}{2} (Y^{(t)}_\tau)^2 \D \tau + Y^{(t)}_\tau \D W_{1,\tau}, & X_0^{(t)} = 0,\\
\displaystyle \D Y^{(t)}_\tau = (\lambda + \beta Y^{(t)}_\tau)\D \tau + \xi \D W_{2,\tau}, & Y^{(t)}_0 ~\sim \sigma_t.
\end{array}
\right.
\end{equation}
The stochastic differential equation for $(X_\tau^{(t)} , Y_\tau^{(t)})_{\tau\geq 0}$ 
is the same as that for $(X_t , Y_t)_{t\geq 0}$, 
albeit with an initial random distribution $(\delta_0,\sigma_t)$, 
where~$\sigma_t$ is Gaussian 
with mean $\E^{\beta t}(\sigma_0 + \frac{\lambda}{\beta}) - \frac{\lambda}{\beta}$ 
and variance $\frac{\xi^2}{2\beta}(\E^{2\beta t}-1)$. 
We now apply the results of Section~\ref{Sec:Mellouk} to obtain  small-time asymptotics for a version of the Stein-Stein `forward' model 
with a generalised random starting point. 
\begin{proposition}
With the scaling $(X^\eps_\tau, Y^\eps_\tau) := (X^{(t)}_{\eps^2 \tau}, Y^{(t)}_{\eps^2 \tau})$ for $\eps,t >0$, the randomised Stein-Stein rescaled model~\eqref{eq:rSS} 
is the same as~\eqref{eq:SystemEps} with coefficients given in~\eqref{eq:SystemEpsCoef},
with $\widetilde{\sigma}(y)\equiv y$, and Theorem~\ref{Th_rdm_new} applies.
\end{proposition}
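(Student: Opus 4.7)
The plan is to proceed in two stages: first, to reduce the rescaled forward Stein--Stein SDE to the canonical form~\eqref{eq:SystemEps}--\eqref{eq:SystemEpsCoef} by a Brownian time-change; and second, to verify that the resulting system meets the hypotheses of Theorem~\ref{Th_rdm_new}, so that its conclusion can be invoked directly.

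For the first step, I introduce $\widetilde{B}_{i,\tau} := \eps^{-1} W_{i, \eps^2 \tau}$ for $i=1,2$, which, by the Brownian scaling property, are standard Brownian motions preserving the original correlation structure (writing $W_1 = \rho W_2 + \bar{\rho} W_2^\perp$ for some independent $W_2^\perp$). Applying the chain rule for the time-change $\tau \mapsto \eps^2 \tau$ inside~\eqref{eq:rSS}, a direct computation yields
$$
\D X^\eps_\tau = -\tfrac{\eps^2}{2}(Y^\eps_\tau)^2\D\tau + \eps Y^\eps_\tau\left(\rho\D\widetilde{B}_\tau + \bar{\rho}\D\widetilde{B}^\perp_\tau\right),
\quad
\D Y^\eps_\tau = \left(\eps^2\lambda + \beta\eps^2 Y^\eps_\tau\right)\D\tau + \eps\xi\D\widetilde{B}_\tau,
$$
which matches~\eqref{eq:SystemEps} with $\widetilde{\sigma}(y)\equiv y$ and $Y^\eps_0$ distributed as the (generalised) starting point in place of $\sigma_t$. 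Shifting by the $\Ff_0$-measurable initial datum $\X^\eps_0 = (0,\Theta)$, as is systematic in Mellouk's framework, places the drift and diffusion into precisely the form~\eqref{eq:SystemEpsCoef}.

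For the second step, I verify Assumption~\ref{ass:UniqueSol}. With $\widetilde{\sigma}(y) = y$, the diffusion matrix $a(\cdot,u)$ is affine in its argument, so the controlled ODE $\psi_t = \int_0^t a(\psi_s, u)\dot{\varphi}_s\D s$ decouples: the second component reads $\dot{\psi}^{(2)}_t = \xi\dot{\varphi}^{(1)}_t$, integrating explicitly to $\psi^{(2)}_t = \xi\varphi^{(1)}_t$, after which the first component $\dot{\psi}^{(1)}_t = (\xi\varphi^{(1)}_t + u_2)(\rho\dot{\varphi}^{(1)}_t + \bar{\rho}\dot{\varphi}^{(2)}_t)$ is obtained by direct integration against $\D\varphi$. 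Uniqueness is immediate; combined with the compact support hypothesis on $\Theta$ built into the generalised starting point, Theorem~\ref{Th_rdm_new} then applies.

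The main obstacle is structural rather than algebraic: $\widetilde{\sigma}(y) = y$ is unbounded, whereas Mellouk's hypotheses $(H_1)$--$(H_2)$ require a bounded diffusion coefficient. The Azencott--Peithmann extension alluded to in Section~\ref{Sec:Mellouk} is precisely what permits linear growth once $\supp(\Theta)$ is compact, via a standard localisation argument confining sample paths to a compact set with super-exponentially small exit probability over the short time horizon~$\eps^2\Tt$. Carefully checking that this extension accommodates the $\eps$-dependent drift appearing in~\eqref{eq:SystemEpsCoef}, rather than re-deriving the LDP from scratch, is where most of the care is needed.
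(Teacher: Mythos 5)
Your proof is correct and supplies the content the paper leaves entirely implicit (the paper states this proposition without proof, treating the rescaling computation and the verification of Assumption~\ref{ass:UniqueSol} as immediate). Your time-change computation via Brownian scaling is exactly right, as is the explicit solution of the controlled ODE, where the triangular structure of the diffusion matrix gives $\psi^{(2)}_t = \xi\varphi^{(1)}_t$ and then $\psi^{(1)}$ by direct quadrature; uniqueness follows. You also correctly note that the generalised starting point must be compactly supported for Theorem~\ref{Th_rdm_new} to apply, a restriction that is \emph{not} met by the Gaussian $\sigma_t$ appearing in~\eqref{eq:rSS} itself, which is why the paper speaks of a ``generalised'' starting point.

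You identify a genuine tension that the paper glosses over: Mellouk's hypothesis $(H_2)$ requires $\|a(\xx,\zz)\|\le C$, and the Peithmann-style condition used inside the proof of Theorem~\ref{Th_rdm_new} (that $\|c(\eps,t,\xx,\yy)\|+\|c_0(t,\xx,\yy)\|\le \varrho(t)$ uniformly in $\xx$) likewise presupposes a bounded diffusion coefficient, whereas $\widetilde\sigma(y)\equiv y$ only has linear growth. The paper asserts these conditions are ``easily satisfied'' without addressing this. Your proposed remedy --- localisation, exploiting the compact support of $\Theta$ and the super-exponentially small exit probability from a large compact set on the shrinking time horizon --- is the standard patch and is the right idea; a fully rigorous account would need to spell out the stopping-time argument and check that the $\eps$-dependent drift does not interfere, but as a blind proof at the level of detail the paper itself provides, your treatment is in fact more careful than the source.
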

We can translate this result into forward implied volatility asymptotics directly using~\cite[Theorem 2.4]{JF09},
and refer to this very paper for a precise definition of the forward implied volatility~$\Sigma_{t,\tau}$:
\begin{corollary}
The small-time forward smile reads
$\displaystyle \lim_{\tau \downarrow 0} \Sigma^2_{t,\tau}(k)
 = \frac{k^2}{2}\left(\inf_{y \ge k} \breve{I}^\alpha (\phi)|_{\phi_{1}=y}\right)^{-1}$, 
 with~$\breve{I}^\alpha$ in Theorem~\ref{Th_rdm_new}.
\end{corollary}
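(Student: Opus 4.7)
The plan is to transfer the small-time pathwise large deviations principle supplied by Theorem~\ref{Th_rdm_new} (as encoded in the preceding proposition for the randomised forward Stein-Stein model) into a small-time tail estimate for the marginal $X_\tau^{(t)}$, and then invoke the moment-formula-to-implied-volatility transfer of Jacquier-Friz~\cite[Theorem 2.4]{JF09} to conclude.

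First, by the preceding proposition, the rescaled process $(X^{\eps}, Y^{\eps}) = (X^{(t)}_{\eps^2 \cdot}, Y^{(t)}_{\eps^2 \cdot})$ satisfies $\X^\eps\sim\LDP(\eps^2,\breve{I}^\alpha)$ on $\Cc^{\alpha}(\Tt,\RR^2)$ for any $\alpha\in[0,\tfrac{1}{2})$. Projecting on the first coordinate at time $1$ is continuous for the $\alpha$-H\"older topology (as evaluation at a fixed time is a bounded linear functional on $\Cc^\alpha$), so the contraction principle yields, for any $k>0$,
\begin{equation*}
\lim_{\eps \downarrow 0} \eps^{2} \log \PP\left(X^{\eps}_{1} \geq k\right)
 = -\inf_{y \geq k} \breve{I}^{\alpha}(\phi)\big|_{\phi_{1}=y},
\end{equation*}
provided the infimum over the closed set $\{\phi_1\geq k\}$ coincides with that over its interior $\{\phi_1>k\}$; this follows from the lower semi-continuous regularisation built into $\breve{I}^{\alpha}$ together with standard continuity in $k$ of the value function, as in the Freidlin-Wentzell tradition.

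Next, I would undo the time rescaling. Setting $\tau = \eps^{2}$, the identity in law $X^{\eps}_{1} = X^{(t)}_{\eps^{2}} = X^{(t)}_{\tau}$ implies
\begin{equation*}
\lim_{\tau \downarrow 0} \tau \log \PP\!\left(X^{(t)}_{\tau} \geq k\right)
 = -\inf_{y \geq k} \breve{I}^{\alpha}(\phi)\big|_{\phi_{1}=y} =: -\Lambda^{\star}(k),
\end{equation*}
and symmetrically for the left tail when $k<0$. This is precisely the small-time large deviations input required by~\cite[Theorem 2.4]{JF09}, which states that whenever the log-return satisfies a small-time LDP with rate function $\Lambda^{\star}$, the Black-Scholes implied volatility of the associated forward-start option enjoys the representation
\begin{equation*}
\lim_{\tau \downarrow 0} \Sigma_{t,\tau}^{2}(k) = \frac{k^{2}}{2\,\Lambda^{\star}(k)}.
\end{equation*}
Substituting the explicit expression of $\Lambda^{\star}$ obtained above yields the claimed formula.

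The main obstacle is the bookkeeping needed to invoke~\cite[Theorem 2.4]{JF09} cleanly: one must verify that the LDP upper and lower bounds match at the threshold $y=k$ (so that the rate function of the marginal is continuous at $k$), and check the sign conventions separating the out-of-the-money call regime ($k>0$) from the put regime ($k<0$). Once these technical checks are in place, the rest is purely an application of the contraction principle and of the Jacquier-Friz transfer result, with no further analytic work required beyond the pathwise LDP already granted by Theorem~\ref{Th_rdm_new}.
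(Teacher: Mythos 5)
Your proposal takes essentially the same route as the paper: both derive the result by combining the preceding proposition (small-time pathwise LDP for the randomised forward Stein-Stein model) with the transfer theorem~\cite[Theorem~2.4]{JF09}. The paper's own proof is in fact no more than the citation to~\cite{JF09}, so your filling-in of the contraction-principle projection to the time-$1$ marginal and of the $\tau=\eps^{2}$ rescaling is a more explicit rendering of the same argument.
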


\appendix
\section{Proofs}

\subsection{Proof of Theorem~\ref{Th:tails_frac_X}}
\subsubsection{Proof of Lemma~\ref{lem:rep_fBm}}\label{App:Lem_F}
From~\cite[Definition 2.1]{YLX08}, for any smooth function~$f$ on $\Tt$ with bounded derivatives such that 
$\|f\|:= \int_{\Tt} {(\Gamma^{*}_{H,t} f(t))}^2 \D t < \infty$, define, for $t\in\Tt$,
$ \int_{0}^{t} f(u) \D W^H_u := \int_{0}^{t} \Gamma^{*}_{H,t} f(u) \D Z_u $, with 
$$
\Gamma^{*}_{H,t} f(s) := - \frac{\kappa_H}{s^{\Hm}} \frac{\D}{\D s} \int_{s}^{t} u^{\Hm} {(u-s)}^{\Hm} f(u) \D u,
\qquad\text{for all }s\in (0,t].
$$
Applying this to $f(s) := \xi\E^{\beta (t-s)}$, one obtains the following:

\begin{itemize}
\item[(1)] for $H < \frac{1}{2}$, using integration by part and Leibniz' integration rule,
\begin{align*}
 & \frac{\D}{\D s} \int_{s}^{t} u^{\Hm} {(u-s)}^{\Hm} f(u) \D u \\
&= \frac{\D}{\D s} \left\{ \frac{\xi(t-s)^{\Hp} t^{\Hm}}{\Hp} \right\} +
 \frac{\D}{\D s} \left\{-\xi\frac{\Hm}{\Hp} \int_{s}^{t} \frac{(u-s)^{\Hp}}{u^{1-\Hm}} \E^{\beta (t-u)} \D u + \xi\frac{\beta}{\Hp} \int_{s}^{t} {(u-s)}^{\Hp} u^{\Hm} \E^{\beta (t-u)} \D u\right\} \\
& =- \xi{(t-s)}^{\Hm} t^{\Hm} + \xi\Hm \int_{s}^{t} \frac{(u-s)^{\Hm}}{u^{1-\Hm}} \E^{\beta (t-u)} \D u 
- \xi\beta \int_{s}^{t} {(u-s)}^{\Hm} u^{\Hm} \E^{\beta (t-u)} \D u.
\end{align*} 
Hence,
\begin{align*}
\Gamma^{*}_{H,t} f(s) 
&:= -\frac{\kappa_H}{s^{\Hm}} \frac{\D}{\D s} \int_{s}^{t} u^{\Hm} {(u-s)}^{\Hm} f(u) \D u \\
&= \frac{\xi\kappa_H}{s^{\Hm}} \left((t-s)^{\Hm} t^{\Hm} 
+\beta \int_{s}^{t} {(u-s)}^{\Hm} u^{\Hm} \E^{\beta (t-u)} \D u
 - \Hm \int_{s}^{t} \frac{(u-s)^{\Hm}}{u^{1-\Hm}} \E^{\beta (t-u)} \D u \right).
\end{align*}

\item[(2)] for $H > \frac{1}{2}$, using Leibniz' integration rule,

$$
\frac{\D}{\D s} \int_{s}^{t} u^{\Hm} {(u-s)}^{\Hm} f(u) \D u
= \int_s^t \xi\frac{\D}{\D s} u^{\Hm} {(u-s)}^{\Hm} \E^{\beta (t-u)} \D u
= -\xi\Hm \int_s^t u^\Hm (u-s)^{\Hm-1} \E^{\beta (t-u)} \D u,
$$
and hence
$$
\Gamma^{*}_{H,t} f(s) 
:= -\frac{\kappa_H}{s^{\Hm}} \frac{\D}{\D s} \int_{s}^{t} u^{\Hm} {(u-s)}^{\Hm} f(u) \D u
=\frac{\xi\kappa_H \Hm}{s^{\Hm}} \int_s^t u^\Hm (u-s)^{\Hm-1} \E^{\beta (t-u)} \D u
$$
\end{itemize}

\subsubsection{Proof of Proposition~\ref{prop:RKHS_tails}}\label{sec:proof_RKHS_tails}
In order to prove that~$\Hh_{\Fh}$ is the RKHS of the Gaussian process $\xi\int_0^\cdot \E^{\beta (\cdot -s)}\D W^H_s$, one needs to show that the operator~$\KK_{\Fh}$ is a bijection from~$\LTt$ to~$\Hh_{\Fh}$ and that $\Hh_{\Fh}$ is dense in $\mathcal{C}$.

The operator $\KK_{\Fh}$ acting on~$\LTt$ 
defined by $(\KK_{\Fh}f)(t):=\int_{0}^{t} \Fh(t,s) f(s) \D s$
satisfies $\KK_{\Fh} \LTt =: \Hh_{\Fh}$ and is surjective. 
It is also injective on~$\Hh_{\Fh}$: let $f \in \LTt$ a non-zero function such that  
$(\KK_{\Fh}f) = 0$ on~$\Tt^*$. 
Similar to~\cite{FZ16}, there is an interval $[t_1,t_2]\subset \Tt$ where~$f$ has constant sign. 
Using previous notations, $\Fh$ is defined, for $t\in\Tt^*$ and $s\in (0,t]$, as 
$\Fh(t,s) := \xi\Gamma^{*}_{H,t}\E^{\beta (t-s)}$. 
The function $g(s) := \xi\int_{t}^{s} u^{\Hm} {(u-s)}^{\Hm} \E^{\beta (t-u)} \D u$ is increasing, 
hence~$\Fh(t,\cdot)$ is a positive function on $[0,t]$ and
$\int_{t_1}^{t_2} \Fh(t,s) f(s) \D s >0$, leading to a contradiction. 
Thus $\KK_{\Fh}$ is injective on $\Hh_{\Fh}$, hence bijective from~$\LTt$ to~$\Hh_{\Fh}$.	
Since $\KK_{\Fh}$ is a linear operator, 
one can then define 
${\langle \KK_{\Fh}f_1, \KK_{\Fh}f_2 \rangle}_{\Hh_{\Fh}} := {\langle f_1, f_2 \rangle}_{\LTt}$ as an inner product.

The second part of the proof consists in showing that $\Hh_{\Fh} :=\KK_{\Fh}\LTt$ is dense in~$\Cc$. 
We claim that $\Fh(t,\cdot)^{-1} \in \LTt$.
Indeed, as shown in the proof of~\cite[Theorem 3.1]{YLX08}, the following hold for all $0<s<t$:
\begin{itemize} 
\item if $0<H<\frac{1}{2}$, $\Fh(t,s) \ge \xi\kappa_H s^{\frac{1}{2}-H} \tau^{\Hm} {(t-s)}^{\Hm} >0$, leading
$$
\int_{0}^{t} \frac{\D s}{ \Fh(t,s)^2}
\le \frac{1}{\xi^2 \kappa^2_H t^{2\Hm}} \int_{0}^{t} \left(\frac{s}{t-s}\right)^{2\Hm}\D s
\le \frac{1}{\xi^2 \kappa^2_H t^{4\Hm}} \int_{0}^{t} s^{2\Hm} \D s
= \frac{{t^{2-2H}}}{2H\xi^2 \kappa^2_H} < \infty;
$$
\item if $\frac{1}{2} < H < 1$, $\Fh(t,s) \ge \xi \kappa_H \E^{- \beta( s-t)} (t-s)^{\Hm}$, so that
$$
\int_{0}^{t} \frac{\D s}{\Fh(t,s)^2}
\le \frac{1}{\xi^2 \kappa^2_H \E^{2\beta t}} \int_0^t \frac{\E^{2\beta s}\D s}{(t-s)^{2\Hm}}
\le \frac{1}{\xi^2 \kappa^2_H \E^{2\beta t} t^{2\Hm}} \int_0^t \E^{2\beta s} \D s
= \frac{1-\E^{-2\beta t}}{2\beta\xi^2 \kappa^2_H t^{2\Hm}} 
< \infty.
$$
\end{itemize}
Given $a>0$, take for $s \in [0,t]$, $f_a(s) := \frac{s^a}{\Fh(t,s)} \in \LTt$. 
Then, $\int_{0}^{t} \Fh(t,s) f_a(s) \D s = \frac{{t}^{a+1}}{a+1}$.
Hence, $\Hh_{\Fh}$ contains all polynomials null at the origin, and by the Stone-Weierstrass Theorem, 
is dense in~$\Cc$.

\subsubsection{Proof of Theorem~\ref{Th:tails_frac_X}(i)}\label{Sec:Proof1}
Let $\overline{Y}^{\eps}$ denote the solution to the second SDE~\eqref{eq:fracSSRandom}, starting from zero.
The product rule for (fractional) Brownian motion~\cite[Theorem 3.1.4]{M08} yields, for any $t \geq 0$,
\begin{equation}\label{eq:auxi}
\overline{Y}^{\eps}_t = - \frac{\lambda \eps^b}{\beta} \left(1- \E^{\beta t}\right) + \xi \eps^b \int_{0}^{t} \E^{\beta (t-u)} \D W^H_u,
\end{equation}
Since 
$\xi \int_{0}^{\cdot} \E^{\beta (\cdot-u)} \D W^H_u$ is Gaussian,
Lemma~\ref{lem:rep_fBm} combined with~\cite[Theorem 3.4.5]{DBookS}
yields a large deviations principle on~$\Cc$ for the sequence
$\left(\xi\eps^b \int_{0}^{\cdot} \E^{\beta (\cdot - s)} \D W^H_s\right)_{\eps>0}$,
with speed~$\eps^{2b}$ and good rate function~$\Lambda_{\Fh}$ as in~\eqref{eq:LDPVolterrageneral}.
Since the two sequences ${(\overline{Y}^{\eps})}_{\eps >0}$ and 
${(\xi \eps^b \int_{0}^{\cdot} \E^{\beta (\cdot - s)} \D W^H_s)}_{\eps >0}$ only differ by some deterministic quantity, they are clearly exponentially equivalent:
for any $\delta >0$ and $t\in\Tt^*$, 
$$
\limsup_{\eps \downarrow 0} \eps^{2b}
\log \PP \left( \left\|\overline{Y}^{\eps}-\xi \eps^b \int_{0}^{\cdot} \E^{\beta (\cdot-s)} \D W^H_s\right\|_{\infty} > \delta \right)
= \limsup_{\eps \downarrow 0} \eps^{2b} \log \PP \left( \left\|\frac{\lambda \eps^b}{\beta} (\E^{\beta \cdot}-1)\right\|_{\infty}> \delta \right) = -\infty,
$$
since $\beta <0$,
so that $\overline{Y}^{\eps}\sim\LDP\left(\eps^{2b}, \Lambda_{\Fh}\right)$.
Finally, the LDP for~$Y^{\eps}$ follows again by exponentially equivalence:  
$$
\PP\left(\left\|Y^{\eps}-\overline{Y}^{\eps} \right\|_{\infty}> \delta\right)
= \PP\left( \left\| \eps^b \Theta \E^{\beta \cdot}\right\|_{\infty} > \delta\right)
\le \PP \left( \left|\Theta\right| > \frac{\delta}{\eps^b } \right),
$$
for $t,\eps,\delta >0$, and the theorem follows from Assumption~$\Abt$.

\subsubsection{Proof of Theorem~\ref{Th:tails_frac_X}(ii)}
We first prove large deviations for the Gaussian drivers of the process,
which we then, by means of iterated Contraction Principles, translate to large deviations
for the whole scaled process.
\begin{itemize}
\item[(1)] When $H \neq \frac{1}{2}$, Lemma B.1 in~\cite{FZ16} implies that $(\rho B,W^H)$ is Gaussian, 
so that $\eps^\bk(\rho B,W^H)$ satisfies a large deviations principle on $\Cc(\Tt,\RR^2)$, 
with speed $\eps^{2b}$ and good rate function
\begin{equation}\label{grf_LDP_X_1}
I_1(\varphi, \psi) :=
\left\{
\begin{array}{lccc}
\displaystyle
\frac{1}{2} \int_{\Tt} f(s)^2 \D s, & (\varphi, \psi) \in \overline{\Hh}_H, \\
+ \infty, & (\varphi, \psi) \notin \overline{\Hh}_H,
\end{array}
\right.
\end{equation}
where $\overline{\Hh}_H$ is the RKHS for $(\rho B,W^H)$ defined as
$$
\overline{\Hh}_H := \left\{(\varphi, \psi) \in \Cc^2: \varphi(\cdot) = \rho\int_{0}^{\cdot} f(s) \D s
\text{ and } \psi(\cdot)=\int_{0}^{\cdot} K^H(\cdot,s) f(s) \D s, \text{ for some }f \in \LTt\right\}.
$$
By independence of $B^\perp$ with respect to $B$ and $W^H$ and using the Contraction Principle, 
$\eps^\bk(\overline{\rho}B^\perp + \rho B, W^H)$ satisfies a large deviations principle on $\Cc(\Tt,\RR^2)$, with speed $\eps^{2b}$ and good rate function
$$
I_3(\varphi, \psi) :=
\inf_{\varphi=u+w} \left\{I_1(u,\psi)+I_2(w)\right\}
 = \inf_{u\in \Hh} \left\{I_1(u,\psi) + I_2(\varphi-u)\right\}, 
$$
where, with $\Hh_{\overline{\rho}}$ as in~\eqref{eq:RKHSgeneral},
\begin{equation*}
I_2(w) :=
\left\{
\begin{array}{lccc}
\displaystyle
\frac{\overline{\rho}}{2} \int_0^{1} f(s)^2 \D s, & w \in \Hh_{\overline{\rho}}, \\
+ \infty, & w \notin \Hh_{\overline{\rho}}.
\end{array}
\right.
\end{equation*}

\item[(2)] For $H = \frac{1}{2}$, using the proof of Theorem~\ref{thm:ext_MNSLDP}, $\eps^b (\overline{\rho}B^\perp + \rho B,B)'$  
satisfies a large deviations principle on $\Cc(\Tt,\RR^2)$, when~$\eps$ tends to zero with speed $\eps^{2b}$ 
and good rate function $\Lambda^{\rho}$ defined in~\eqref{eq:ext_grf}.
\end{itemize}

We now introduce the process $\widetilde{X}^\eps$ satisfying the following SDE $\D \widetilde{X}^\eps_t = -\frac{1}{2} \widetilde{\sigma}^2(Y^\eps_t) \D t + \eps^b \widetilde{\sigma} (Y^\eps_t) (\rho \D B_t + \overline{\rho} \D B^\perp_t)$,
and translate this large deviations into one for the sequence~$\widetilde{X}^\eps$.
Since the proof for the $H \neq \frac{1}{2}$ case is the same as the one for $H = \frac{1}{2}$,
albeit with a different rate function, we concentrate on the former case.
Since $Y^{\eps}_t = \eps^b \E^{\beta t} \Theta - \frac{\lambda \eps^b}{\beta}(1-\E^{\beta t}) + \eps^b \xi \int_0^t \E^{\beta (t-u)} \D W^H_u$, 
one can define a continuous map~$G((0,1)\times\Cc\to\Cc)$, 
such that 
$(\eps^b (\rho B + \overline{\rho} B^{\perp}), Y^{\eps})=(\eps^b (\rho B + \overline{\rho} B^{\perp}), G(\eps,\eps^b W^H)(t))$. 
Moreover, using $\Abt$, $G(\eps,\eps^b W^H)$ is exponentially equivalent to $\overline{G}(\eps^b W^H)$, with $\overline{G}(\varphi)(t):= \xi \int_0^t \E^{\beta (t-u)} \D \varphi_u$.
The Contraction Principle thus yields
$(\eps^b (\rho B + \overline{\rho} B^{\perp}), Y^{\eps})\sim\LDP(\eps^{2b},I_4)$,
with $I_4:(\varphi,\psi)\mapsto\inf \{ I_3(\varphi,v) \ | \ \psi \in \Cc : \psi=\overline{G}(v) \}$.

Under~$\Aa'$, one can apply the extended Contraction Principle proved in~\cite[Proposition 2.3]{MNP92}, 
so that 
$(\eps^b (\rho B + \overline{\rho}) B^\perp, \widetilde{\sigma}(Y^{\eps}))\sim\LDP(\eps^{2b}, I_5)$ 
with 
$I_5(\varphi, \psi) := \inf \left\{ I_4(\varphi,v) \ | \ \psi=\widetilde{\sigma}(v) \right\}
 = \inf \left\{ I_3(\varphi,v) \ | \ \psi=\widetilde{\sigma}(\overline{G}(v)) \right\} $.
Finally, setting $b \ge \frac{1}{2}$, the sequence of semi-martingales $(\eps^b W)$ 
is uniformly exponentially tight (UET) in the sense of~\cite[Definition 1.1]{Gar08},
and the sequence $(\widetilde{\sigma}(Y^\eps))$ is c\`adl\`ag (Assumption $\Aa$), 
and adapted to the filtration $\mathcal{F}$. 
Denoting $X \cdot Y:=\int X\D Y$ the stochastic integral with respect to a semi-martingale, 
Theorem 1.2 in~\cite{Gar08} yields a large deviations principle on $\Cc(\Tt,\RR^3)$ for
$(\eps^b (\rho B_s + \overline{\rho} B^\perp_s), \widetilde{\sigma}(Y^\eps), \eps^b \int_0^\cdot \widetilde{\sigma}(Y^\eps_s) (\rho \D B_s + \overline{\rho} \D B^\perp_s))$, 
with speed $\eps^{2b}$ and good rate function~$I_6$ defined as
$I_6(\phi):= I_5(\varphi, \psi)$ if $\phi = \varphi \cdot \psi$ and $\psi\in\BV$
(the space of functions of finite variation), and infinity otherwise.
Applying another Contraction Principle, since, for $t\in\Tt^*$,
$$
\widetilde{X}^\eps_t = -\frac{1}{2} \int_0^t \widetilde{\sigma}(Y^\eps_s)^2 \D s + \int_0^t \eps^b \widetilde{\sigma}(Y^\eps_s) \D W_s
 =: I\Big(\widetilde{\sigma}(Y^\eps), 
 \widetilde{\sigma}(Y^\eps) \cdot \eps^b \left(\rho B_s + \overline{\rho} B^\perp_s\right)\Big)(\eps,t).
$$
Hence $\widetilde{X}^\eps\sim\LDP(\eps^{2b}, \widetilde{\Lambda})$ with
\begin{equation}\label{eq:grf_X_tails}
\widetilde{\Lambda} (\phi) := \inf \{ I_6(\chi) \ | \ \phi = I(\varphi, \varphi \cdot \psi), \chi = (\varphi,\psi), \psi\in\BV\}.
\end{equation}
The last step is proving that the processes $X^\eps$ and $\widetilde{X}^\eps$ are exponentially equivalent. Indeed, for $t\in\Tt^*$,
$$
\left\|X^\eps - \widetilde{X}^\eps \right\|_{\infty}
\le \frac{1}{2} \left\| \int_0^\cdot \left| \eps^{2b} \sigma^2 \left( \frac{Y^\eps_s}{\eps^b}\right) - \widetilde{\sigma}(Y^\eps_s) \right| \D s \right\|_{\infty} 
+ \eps^b \left\| \int_0^\cdot \left| \eps^b \sigma \left( \frac{Y^\eps_s}{\eps^b} \right) - \widetilde{\sigma} (Y^\eps_s) \right| \D \left(\rho B_s + \overline{\rho} B^\perp_s\right) \right\|_{\infty}.
$$
Using the linear growth assumption as well as Assumption~$\Aa$, we have for $s \in [0,t]$,
$$
\left| \eps^{2b} \sigma^2 \left( \frac{Y^\eps_s}{\eps^b}\right) - \widetilde{\sigma}(Y^\eps_s) \right| 
\le 2\eps^{2b} C^2 \left( 1 + \left|\frac{Y^\eps_s}{\eps^b}\right|^2 \right) 
+ 2C^2 \left( 1 + |Y^\eps_s|^2 \right)
\le 2C^2 \left( 1 + \eps^{2b} + 2|Y^\eps_s|^2 \right),
$$ 
hence
$$
\left\| \int_0^\cdot \left| \eps^{2b} \sigma^2 \left( \frac{Y^\eps_s}{\eps^b}\right) - \widetilde{\sigma}(Y^\eps_s) \right| \D s \right\|_{\infty} 
\le \left\| \int_0^\cdot 2C^2 \left( 1 + \eps^{2b} + 2|Y^\eps_s|^2 \right) \D s \right\|_{\infty} 
\le 2C^2 (1+ \eps^{2b}) + 4C^2 \left\| \int_0^\cdot |Y^\eps_s|^2 \D s \right\|_{\infty}. 
$$

Since $Y^\eps_s = \eps^b \E^{\beta s} \Theta + \eps^b \frac{\lambda}{\beta} (\E^{\beta s}-1) + \eps^b \xi \left[ W^H_s +\beta \int_0^s W^H_u \E^{-\beta u} \D u \right]$, 
we obtain for $s \in \Tt$, recalling that $\beta<0$,
$$
|Y^\eps_s|
\le \eps^b \E^{\beta s} |\Theta|
+ \eps^b \frac{\lambda}{\beta} \left(\E^{\beta s}-1\right) 
+ \xi \eps^b |W^H_s|,
$$
and
\begin{equation}\label{eq:intYeps2}
|Y^\eps_s|^2
\le 2\left(\eps^{2b} \E^{2\beta s} \Theta^2 
+ 2\eps^{2b} {\left(\frac{\lambda}{\beta}\right)}^2 \left(\E^{2\beta s}+1\right) 
+ \xi^2 \eps^{2b} |W^H_s|^2\right), 
\end{equation}
so that
$\int_0^t |Y^\eps_s|^2 \D s 
\le 2\left( \eps^{2b} t \Theta^2 +\eps^{2b} {\left(\frac{\lambda}{\beta}\right)}^2 \left(2t-\frac{1}{\beta}\right) + \xi^2 \eps^{2b} \int_0^t |W^H_s|^2 \D s \right)$.

Introducing $\Jj := \left\| \int_0^\cdot |Y^\eps_s|^2 \D s \right\|_\infty$, we obtain for any $\delta >0$ and $a_0 >0$,
$$
\begin{aligned}
\PP\left(\left\|X^\eps - \widetilde{X}^\eps \right\|_{\infty} > \delta \right)
&\le \PP\left(2C^2 \Jj + \eps^b \left\| \int_0^\cdot \left( \eps^b \sigma \left(\frac{Y^\eps_s}{\eps^b} \right) - \widetilde{\sigma} (Y^\eps_s) \right) \D (\rho B_s + \overline{\rho} B^\perp_s) \right\|_\infty > \delta_{\eps} \right)\\
&\le \PP\left(\left. 2C^2 \Jj + \eps^b \left\| \int_0^\cdot \left( \eps^b \sigma \left(\frac{Y^\eps_s}{\eps^b} \right) - \widetilde{\sigma} (Y^\eps_s) \right) \D (\rho B_s + \overline{\rho} B^\perp_s) \right\|_\infty > \delta_{\eps} \right| 2C^2 \Jj < a_0 \right)(1-\mathfrak{P})\\
& +  \PP\left(\left.2C^2 \Jj + \eps^b \left\| \int_0^\cdot \left( \eps^b \sigma \left(\frac{Y^\eps_s}{\eps^b} \right) - \widetilde{\sigma} (Y^\eps_s) \right) \D (\rho B_s + \overline{\rho} B^\perp_s) \right\|_\infty > \delta_{\eps} \right| 2C^2 \Jj \ge a_0 \right)\mathfrak{P}\\
&\le \PP\left(\left. \left\| \int_0^\cdot \left( \eps^b \sigma \left(\frac{Y^\eps_s}{\eps^b} \right) - \widetilde{\sigma} (Y^\eps_s) \right) \D (\rho B_s + \overline{\rho} B^\perp_s) \right\|_\infty 
> \frac{\delta_{\eps}-a_0}{\eps^b} \right| 2C^2 \Jj < a_0 \right)+\mathfrak{P},
\end{aligned}
$$
with $\mathfrak{P}:=\PP \left( 2C^2 \Jj \ge a_0 \right)$
and $\delta_{\eps}:=\delta - C^2(1+\eps^{2b})$.
Using~\eqref{eq:intYeps2}
and introducing $\Jj_t := \int_0^t |W^H_s|^2 \D s$, we obtain
$$
\begin{aligned}
\PP\left(\Jj > \bar{\delta} \right)
&\le \PP\left(\left| \eps^{2b} \Theta^2 +\eps^{2b} {\left(\frac{\lambda}{\beta}\right)}^2 \left(2-\frac{1}{\beta}\right) + \xi^2 \eps^{2b} \Jj_1 \right|  > \frac{\bar{\delta}}{2} \right)\\
&\le \PP\left(\left.\eps^{2b} \Theta^2 +\eps^{2b} {\left(\frac{\lambda}{\beta}\right)}^2 \left|2-\frac{1}{\beta}\right| + \xi^2 \eps^{2b} \Jj_1  > \frac{\bar{\delta}}{2} \right| \xi^2 \eps^{2b} \Jj_1 < a \right) \times \PP \left( \xi^2 \eps^{2b} \Jj_1 < a \right) \\
& +  \PP\left(\left.\eps^{2b} \Theta^2 +\eps^{2b} {\left(\frac{\lambda}{\beta}\right)}^2 \left|2-\frac{1}{\beta}\right| + \xi^2 \eps^{2b} \Jj_1  > \frac{\bar{\delta}}{2} \right| \xi^2 \eps^{2b} \Jj_1 \ge a \right) \times \PP \left( \xi^2 \eps^{2b} \Jj_1 \ge a \right)\\
&\le \PP\left(\left.\Theta^2 > \frac{1}{\eps^{2b}} \left( \frac{\bar{\delta}}{2} - \eps^{2b} {\left(\frac{\lambda}{\beta}\right)}^2 \left| 2-\frac{1}{\beta}\right|- a\right)\right| \xi^2 \eps^{2b} \Jj_1 < a \right) + \PP \left( \xi^2 \eps^{2b} \Jj_1 \ge a \right), \\
&\le \PP\left(\left.|\Theta| > \frac{1}{\eps^b} \sqrt{\left( \frac{\bar{\delta}}{2} - \eps^{2b} {\left(\frac{\lambda}{\beta}\right)}^2 \left| 2-\frac{1}{\beta}\right|- a\right)}\right| \xi^2 \eps^{2b} \Jj_1 < a \right) + \PP \left( \xi^2 \eps^{2b} \Jj_1 \ge a \right),
\end{aligned}
$$
for any $\bar{\delta} >0$ and $a >0$.
One can then use Markov's inequality: 
$$
\PP \left( \xi^2 \eps^{2b} \Jj_1 \ge a \right)
\le \frac{\mathbb{E}\left[ \xi^2 \eps^{2b} \Jj_1\right]}{a}
= \frac{\xi^2 \eps^{2b}}{a} \mathbb{E} \left(\int_0^1 |W^H_s|^2 \D s \right).
$$
Since $\mathbb{V}(W^H_s)=s^{2H}$, one obtains
$
\PP \left( \xi^2 \eps^{2b} \Jj_1 \ge a \right)
\le  \frac{\xi^2 \eps^{2b}}{a} \frac{1}{2H+1}.
$
Hence there exist $\eta \in [0,1)$ and $\bar{\eps}>0$ such that for all $\eps \le \bar{\eps}$, $\PP(\xi^2 \eps^{2b} \Jj_1 < a) \ge 1-\eta$.
Assumption~$\Abt$ then implies that there exist $M, \tilde{\eps} >0$ 
such that 
$$
\PP\left(|\Theta| > \frac{1}{\eps^b} \sqrt{\left( \frac{\bar{\delta}}{2} - \eps^{2b} {\left(\frac{\lambda}{\beta}\right)}^2 \left| 2-\frac{1}{\beta}\right|- a\right)} \right)
\le \exp\left(-\frac{M}{\eps^{2b}}\right),
\qquad\text{for all }\eps \le \tilde{\eps}.
$$
Thus, for all $\eps \le \min \{ \tilde{\eps}, \bar{\eps}\}$, Bayes' Theorem yields
$$
\begin{aligned}
&\PP\left(\left.|\Theta| > \frac{1}{\eps^b} \sqrt{\left( \frac{\bar{\delta}}{2} - \eps^{2b} {\left(\frac{\lambda}{\beta}\right)}^2 \left| 2-\frac{1}{\beta}\right|- a\right)}\right| \xi^2 \eps^{2b} \Jj_1 < a \right) \\
&\le \frac{\PP\left(|\Theta| > \frac{1}{\eps^b} \sqrt{\left( \frac{\bar{\delta}}{2} - \eps^{2b} {\left(\frac{\lambda}{\beta}\right)}^2 \left| 2-\frac{1}{\beta}\right|- a\right)}\right)}{\PP(\xi^2 \eps^{2b} \Jj_1 < a)}
\le \frac{\exp\left(-\frac{M}{\eps^{2b}}\right)}{1-\eta}.
\end{aligned}
$$
Finally, for all $\eps \le \min \{ \tilde{\eps}, \bar{\eps} \}$,
\begin{equation}\label{eq:bound_Yinfty}
\begin{aligned}
\PP\left(\Jj > \bar{\delta} \right)
&\le \PP\left(\left.|\Theta| > \frac{1}{\eps^b} \sqrt{\left( \frac{\bar{\delta}}{2} - \eps^{2b} {\left(\frac{\lambda}{\beta}\right)}^2 \left| 2-\frac{1}{\beta}\right|- a\right)}\right| \xi^2 \eps^{2b} \Jj_1 < a \right) + \PP \left( \xi^2 \eps^{2b} \Jj_1 \ge a \right),\\
&\le \frac{\E^{-M/\eps^{2b}}}{1-\eta} 
+ \frac{\xi^2 \eps^{2b}}{a} \frac{1}{2\Hp},
\end{aligned}
\end{equation}
so that for all $\eps \le \min \{ \tilde{\eps}, \bar{\eps} \}$, $\PP \left( 2C^2 \Jj \ge a_0 \right) \le \frac{\E^{-M/\eps^{2b}}}{1-\eta} 
+ \frac{\xi^2 \eps^{2b}}{a} \frac{1}{2\Hp}$.

Hence, there exists $\breve{\eta} \in [0,1)$ and $\breve{\eps}< \min \{ \tilde{\eps}, \bar{\eps} \}$ such that for all $\eps \le \breve{\eps}$, $\PP(2C^2 \Jj < a_0) \ge 1-\breve{\eta}$.
Moreover, using the uniform convergence assumed in~$\Aa$, for all $y\in\RR$ there exists $N>0$ and $\eps_0 >0$ such that, for all $\eps < \eps_0$, we have that $\left| \eps^b \sigma \left( \frac{Y^\eps_s}{\eps^b} \right) - \widetilde{\sigma} (Y^\eps_s) \right| \le N$. Hence one can then apply the Borell-TIS inequality (in particular~\cite{P07}[Proposition A.1]), and obtain,
$$
\PP \left( \left\| \int_0^\cdot \left( \eps^b \sigma \left( \frac{Y^\eps_s}{\eps^b} \right) - \widetilde{\sigma} (Y^\eps_s) \right) \D (\rho B_s + \overline{\rho} B^\perp_s) \right\|_{\infty} > \frac{1}{\eps^b} \left(\delta - C^2(1+\eps^{2b})-a_0  \right) \right)
\le 2\exp \left\{- \frac{\left(\delta - C^2(1+\eps^{2b})-a_0  \right)^2}{2N^2  \eps^{2b}}\right\}. 
$$

Thus, for all $\eps \le \min \{ \breve{\eps}, \eps_0\}$, Bayes' Theorem yields
$$
\begin{aligned}
&\PP\left(\left. \left\| \int_0^\cdot \left( \eps^b \sigma \left(\frac{Y^\eps_s}{\eps^b} \right) - \widetilde{\sigma} (Y^\eps_s) \right) \D (\rho B_s + \overline{\rho} B^\perp_s) \right\|_\infty > \frac{1}{\eps^b} \left(\delta - C^2(1+\eps^{2b})-a_0  \right) \right| 2C^2 \Jj < a_0 \right) \\
&\le \frac{\PP\left( \left\| \int_0^\cdot \left( \eps^b \sigma \left(\frac{Y^\eps_s}{\eps^b} \right) - \widetilde{\sigma} (Y^\eps_s) \right) \D (\rho B_s + \overline{\rho} B^\perp_s) \right\|_\infty > \frac{1}{\eps^b} \left(\delta - C^2(1+\eps^{2b})-a_0  \right) \right)}{\PP(2C^2 \Jj < a_0)}, \\
&\le \frac{2}{1-\breve{\eta}}\exp \left\{- \frac{\left(\delta - C^2(1+\eps^{2b})-a_0  \right)^2}{2N^2  \eps^{2b}}\right\}.
\end{aligned}
$$

Finally, for all $\eps \le \min \{ \breve{\eps}, \eps_0\}$,
$$
\PP\left(\left\|X^\eps - \widetilde{X}^\eps \right\|_{\infty} > \delta \right)
\le \frac{2\exp \left\{- \frac{\left(\delta - C^2(1+\eps^{2b})-a_0  \right)^2}{2N^2 \eps^{2b}}\right\}}{1-\breve{\eta}} + \frac{\E^{-M/\eps^{2b}}}{1-\eta} + \frac{\xi^2 \eps^{2b}}{a} \frac{1}{2\Hp}, 
$$
so that $X^\eps$ and $\widetilde{X}^\eps$ are exponentially equivalent and $X^\eps\sim\LDP(\eps^{2b}, \widetilde{\Lambda})$.

\begin{remark}
For computational purposes, such infinite-dimensional optimisation problems are discretised over a set of basis functions, such as orthonormal polynomials.
If we then consider~$\varphi$ continuously differentiable and~$I_1$ finite,
we can simplify 
$I_1(\varphi) = I_1(\varphi, \psi) = \frac{1}{2\rho^2}\int_{\Tt}[\dot{\varphi}(t)]^2\D t$.
Further, if $u\in \Cc^1$, then 
$I_3(\varphi)\simeq \frac{1}{2}\inf_{u\in \Hh\cap \Cc^1}\left\{\int_0^t \left(\frac{\dot{u}_s}{\rho}\right)^2\D s+ \frac{1}{\overline{\rho}}\int_0^t (\dot{\varphi}_s-\dot{u}_s)^2 \D s\right\}$, and
$I_4(\varphi) = I_3(\varphi)$. 
\end{remark}

\subsection{Proof of Theorem~\ref{Th:LDP_ST_X}}\label{sec:Th:LDP_ST_X}
\subsubsection{Proof of Theorem~\ref{Th:LDP_ST_X}(i)}
Let us introduce the process $\overline{Y}^{\eps}$ defined, for $t\in \Tt^*$, as the unique solution of 
$\D \overline{Y}^{\eps}_t = (\eps^{b+2} \lambda + \beta \eps^2 \overline{Y}^{\eps}_t) \D \tau + \eps^{2H+b} \xi \D W^H_t$, with $\overline{Y}^{\eps}_0 = 0$, i.e.
$$
\overline{Y}^{\eps}_t = - \frac{\lambda \eps^b}{\beta} \left( 1-\E^{\beta \eps^2 t} \right) + \xi \eps^{2H+b} \int_{0}^{t} \E^{\beta \eps^2 (t-u)} \D W^H_u.
$$
From Lemma~\ref{lem:rep_fBm}, Proposition~\ref{Pp:RKHS_ST} and Theorem~3.4.5 in~\cite{DBookS}, 
the sequence ${\left(\eps^{2H+b} \int_{0}^{\cdot} G^H_0 (\cdot,s) \D Z_s\right)}_{\eps >0}$ 
satisfies a large deviations principle on~$\Cc$, with speed $\eps^{4H+2b}$ and good rate function $\Lambda_{G^H_0}$ as in~\eqref{eq:LDPVolterrageneral}. Moreover,
\begin{itemize}
\item[(1)] When $H \neq \frac{1}{2}$, ${(\eps^{2H+b} \int_0^t G^H_\eps (t,s) \D Z_s)}_{\eps >0}$ 
and ${(\eps^{2H+b} \int_0^t G^H_0 (t,s) \D Z_s)}_{\eps >0}$ are exponentially equivalent:
indeed the asymptotic expansion of the Gaussian density~\cite[Formula (26.2.12)]{AS72}
yields, for any $\delta >0$,
\begin{equation*}
\begin{array}{rl}
 & \displaystyle \PP \left(\left\|\eps^{2H+b} \int_0^\cdot G^H_\eps (\cdot,s) \D Z_s -\eps^{2H+b} \int_0^\cdot G^H_0 (\cdot,s) \D Z_s\right\|_\infty > \delta\right)\\
 & = \displaystyle \PP \left(\left\|\int_0^\cdot (G^H_\eps (\cdot,s)-G^H_0 (\cdot,s))\D Z_s\right\|_\infty
 > \frac{\delta}{\eps^{2H+b}}\right)
 = \displaystyle \PP \left(|\Nn(0,1)| > \frac{\delta}{\left\|V_\eps\right\|_\infty \eps^{2H+b}}\right), \\
 & = \displaystyle \sqrt{\frac{2}{\pi}} \frac{\left\|V_\eps\right\|_\infty  \eps^{2H+b}}{\delta} \exp \left(- \frac{\delta^2}{2\eps^{4H+2b}\left\|V_\eps\right\|^2_\infty}\right) \left(1+\Oo(\eps^{4H+2b})\right),
\end{array}
\end{equation*}
with $V^2_\eps := \int_0^t {\left( G^H_\eps (t,s) - G^H_0(t,s) \right)}^2 \D s$
and note that $\lim_{\eps \downarrow 0} V^2_\eps =0$.
Now, for $\eps >0$ and $0<s<t$,
\begin{itemize}

\item[(i)] if $H < \frac{1}{2}$,
\begin{align*}
\left[ G^H_\eps (t,s) - G^H_0(t,s)\right]^2
&\le \frac{2\kappa^2_H\xi^2}{s^{2\Hm}}
\left\{\Hm^2 
\left( \int_s^t \frac{(u-s)^{\Hm}}{u^{1-\Hm}} (\E^{\beta \eps^2 (t-u)}-1) \D u \right)^2 
  + \beta^2 \eps^4 {\left( \int_s^t (u-s)^{\Hm} u^{\Hm} \E^{\beta \eps^2 (t-u)} \D u\right)}^2\right\}\\
&\le 2\kappa^2_H \xi^2 \left[\frac{\Hm^2}{s^{2\Hm}} \left( \int_s^t \frac{(u-s)^{\Hm}}{u^{1-\Hm}} \D u \right)^2
 + \frac{\beta ^2 \eps^4}{s^{2\Hm}} \left( \int_s^t {(u(u-s))}^{\Hm} \D u\right)^2 \right].
\end{align*}
Using~\cite[Lemma A.3]{YLX08}, we further obtain, for $\eps >0$ and $0<s<t$, $V^2_\eps 
\le 2\kappa^2_H \xi^2 \left[ C_H t^{2H} + \beta^2 \eps^4 \widetilde{C}_H t^{2H+2} \right]$,
with $C_H, \widetilde{C}_H >0$ two constants depending on~$H$. 
Hence, 
$$
0<\eps^{4H+2b} V^2_\eps \le 2 \kappa^2_H\xi^2 \left( C_H \eps^{4H+2b} t^{2H} + \beta^2 \eps^{4H+2b+4}\widetilde{C}_H t^{2H+2} \right);
$$
\item[(ii)] if $H > \frac{1}{2}$,
\begin{align*}
\left[ G^H_\eps (t,s) - G^H_0(t,s)\right]^2
&= \frac{\kappa^2_H\xi^2}{s^{2\Hm}} \Hm^2 {\left( \int_s^t \frac{u^{\Hm}}{(u-s)^{1-\Hm}} (\E^{\beta\eps^2 (t-u)}-1) \D u \right)}^2\\
&\le \frac{\kappa^2_H\xi^2}{s^{2\Hm}} \Hm^2 {\left( \int_s^t \frac{u^{\Hm}\D u}{(u-s)^{1-\Hm}}\right)}^2
\le \frac{\kappa^2_H\xi^2}{s^{2\Hm}} \Hm^2 t^{2\Hm} {\left( \int_s^t (u-s)^{\Hm-1} \D u \right)}^2
= \frac{\kappa^2_H\xi^2}{s^{2\Hm}} [t(t-s)]^{2\Hm}.
\end{align*}
\end{itemize}
Hence, for $H \neq \frac{1}{2}$, as $b>0$, $\lim_{\eps \downarrow 0} \eps^{4H+2b} \left\|V_\eps\right\|^2_\infty = 0$ as well as $\lim_{\eps \downarrow 0} \eps^{2H+b} \left\|V_\eps\right\|_\infty = 0$.

\item[(2)] When $H=\frac{1}{2}$,
${(\xi\eps^{1+b} \int_0^t \E^{\beta \eps^2 (t-s)} \D B_s)}_{\eps >0}$ 
and ${(\xi\eps^{1+b} B_t)}_{\eps >0}$ are exponentially equivalent:
the asymptotic expansion of the Gaussian density near infinity~\cite[Formula (26.2.12)]{AS72} yields, 
for any $\delta >0$,
\begin{equation*}
\begin{array}{rl}
\displaystyle \PP\left(\left\|\xi\int_0^\cdot \E^{\beta \eps^2 (\cdot-s)} \D B_s - \xi\int_0^\cdot \D B_s\right\|_\infty > \frac{\delta}{\eps^{1+b}}\right)
 & = \displaystyle \PP \left(\left\|\int_0^\cdot \xi(\E^{\beta \eps^2 (\cdot-s)} -1)\D B_s\right\|_\infty > \frac{\delta}{\eps^{1+b}}\right)\\
 & = \displaystyle \PP \left(|\Nn(0,1)| > \frac{\delta}{\left\|V_\eps\right\|_\infty \eps^{1+b}\xi}\right) \\
 & = \displaystyle \sqrt{\frac{2}{\pi}} \frac{\left\|V_\eps\right\|_\infty \eps^{1+b}\xi}{\delta} \exp \left(- \frac{\delta^2}{2\eps^{2+2b}\left\|V_\eps \right\|^2_\infty\xi^2}\right) \left(1+\Oo(\eps^{2+2b})\right),
\end{array}
\end{equation*}
with $V^2_\eps := \int_0^t {\left[(\E^{\beta \eps^2 (t-s)} -1) \right]}^2 \D s
= t + \frac{\E^{2\beta\eps^2 t}-1}{2\beta\eps^2}+\frac{2(1-\E^{\beta\eps^2 t})}{\beta\eps^2}$.
Hence
$
0<\eps^{2+2b} V^2_\eps 
\le \eps^{2+2b} t + \eps^{2b}\frac{\E^{\beta\eps^2 t}-1}{2\beta},
$
and, as $b>0$, $\lim_{\eps \downarrow 0} \eps^{2+2b} \left\|V_\eps \right\|^2_\infty = 0$. 
Besides, note that $\lim_{\eps \downarrow 0} V^2_\eps =0$.
The required exponential equivalence then follows, and
hence for all $H \in (0,1)$,
${(\eps^{2H+b} \int_{0}^{\cdot} G^H_\eps (\cdot,s) \D Z_s)}_{\eps >0}\sim\LDP(\eps^{4H+2b},\Lambda_{G^H_0})$.

The final step is exponential equivalence between $\overline{Y}^{\eps}$ 
and ${(\eps^{2H+b} \int_{0}^{\cdot} G^H_\eps (\cdot,s) \D Z_s)}_{\eps >0}$. 
For any $\delta,t >0$,
\begin{equation*}
\begin{array}{ll}
\displaystyle \PP\left(\left\|\overline{Y}^{\eps} - \eps^{2H+b} \int_0^\cdot G^H_\eps (\cdot,s) \D Z_s\right\|_\infty>\delta\right)
 & = \displaystyle \PP\left(\left\|\overline{Y}^{\eps} - \eps^{2H+b}  \int_{0}^{\cdot} \E^{\beta \eps^2 (\cdot-s)} \D W^H_s\right\|_\infty>\delta\right)\\
 & = \displaystyle \PP\left(\left\|- \frac{\lambda \eps^b}{\beta} (1-\E^{\beta \eps^2 \cdot})\right\|_\infty>\delta\right).
\end{array}
\end{equation*}
Hence $\limsup_{\eps \downarrow 0} \eps^{4H+2b} \log \PP(\left\|\overline{Y}^{\eps} - \eps^{2H+b} \int_0^\cdot G_\eps (\cdot,s) \D Z_s\right\|_\infty>~\delta)=- \infty$, yielding exponential equivalence and
$\overline{Y}^{\eps}\sim\LDP(\eps^{4H+2b},\Lambda_{G_0^H})$.
Finally, since for any $\delta>0$, 
$$
\PP\left(\left\|Y^{\eps}-\overline{Y}^{\eps}\right\|_\infty> \delta\right)
\le \PP \left( \eps^b \left|\Theta\right| > \delta \right),
$$
as $\beta <0$, and
$\overline{Y}^{\eps}$ and $Y^{\eps}$ are exponentially equivalent, and $Y^{\eps}\sim\LDP(\eps^{4H+2b},\Lambda_{G^H_0})$
by Assumption~$\Abt$.
\end{itemize}

\subsubsection{Proof of Theorem~\ref{Th:LDP_ST_X} (ii)}
The idea of the proof is the same as the proof of Theorem~\ref{Th:tails_frac_X}(ii). 
\begin{itemize}
\item[(1)] $H \neq\frac{1}{2}$:
we already established that $\eps^{2H+b}(\overline{\rho}B^\perp + \rho B, W^H)$ 
satisfies a large deviations principle on $\Cc(\Tt,\RR^2)$, with speed $\eps^{4H+2b}$ and good rate function $I_3$ defined earlier.

\item[(2)] $H = \frac{1}{2}$:
using the proof of Theorem~\ref{thm:ext_MNSLDP}, $\eps^{b+1} (\overline{\rho}B^\perp + \rho B,B)'$  
satisfies a large deviations principle on $\Cc(\Tt,\RR^2)$, with speed $\eps^{2+2b}$ 
and good rate function $\Lambda^{\rho}$ defined in~\eqref{eq:ext_grf}.
\end{itemize}

Similar to above, we only prove the case $H\ne\frac{1}{2}$, the other one being analogous. We introduce the process $\widetilde{X}^\eps$ satisfying the following SDE $\D \widetilde{X}^\eps_t = -\frac{\eps^{2H+1}}{2} \widetilde{\sigma}^2(Y^\eps_t) \D t + \eps^{2H+b} \widetilde{\sigma} (Y^\eps_t) (\rho \D B_t + \overline{\rho} \D B^\perp_t)$,
and translate this large deviations into one for the sequence~$\widetilde{X}^\eps$.

Using that for $\eps >0$ and $t\in\Tt^*$, 
$Y^{\eps}_t = \eps^b \E^{\beta \eps^2 t} \Theta - \frac{\lambda \eps^b}{\beta}(1-\E^{\beta \eps^2 t}) + \eps^{2H+b} \xi \int_0^t \E^{\beta \eps^2 (t-u)} \D W^H_u$, one can define a continuous map 
$\widetilde{G}((0,1)\times\Cc\to\Cc)$, such that
$(\eps^{2H+b} (\rho B + \overline{\rho} B^{\perp}), Y^{\eps}) = (\eps^{2H+b} (\rho B + \overline{\rho} B^{\perp}), \widetilde{G}(\eps, \eps^{2H+b} W^H)(t))$. 
Using $\Abt$ and the asymptotic expansion of the Gaussian density near infinity~\cite[Formula (26.2.12)]{AS72}, $\widetilde{G}(\eps, \eps^{2H+b} W^H)$ is exponentially equivalent to $\widehat{G}_{0}(\eps^{2H+b} W^H)$, defined by $\widehat{G}_{0}(\varphi)(t) := \xi \int_0^t \D \varphi_u$.

Hence, the Contraction Principle yields an LDP on $\Cc(\Tt,\RR^2)$ for $(\eps^{2H+b} (\rho B + \overline{\rho} B^{\perp}), Y^{\eps})$, with speed $\eps^{4H+2b}$ and good rate function
 $\widetilde{I}_4: (\varphi, \psi) \mapsto \inf \{ I_3(\varphi,v) \ | \ \psi \in \Cc : \psi=\widehat{G}_{0}(v) \}$.
Under~$\Aa'$, the extended Contraction Principle~\cite[Proposition 2.3]{MNP92}
yields an LDP on $\Cc(\Tt,\RR^2)$ for $(\eps^{2H+b} (\rho B + \overline{\rho} B^\perp), \widetilde{\sigma}(Y^{\eps}))$, with speed $\eps^{4H+2b}$ and good rate function 
$\widetilde{I}_5:(\varphi, \psi) \mapsto \inf \left\{ \widetilde{I}_4(\varphi,v) \ | \ \psi=\widetilde{\sigma}(v) \right\} = \inf \left\{ I_3(\varphi,v) \ | \ \psi=\widetilde{\sigma}(\widehat{G}_{0}(v)) \right\}$.
Finally, setting $b \ge \frac{1}{2}-2H$, the sequence of semi-martingales $(\eps^{2H+b} W)$ is UET 
and the sequence $(\widetilde{\sigma}(Y^\eps))$ is c\`adl\`ag (Assumption~$\Aa$), 
and adapted to the filtration~$\Ff$. 
Theorem~1.2 in~\cite{Gar08} thus gives an LDP on~$\Cc$ for 
the sequence of stochastic integrals $\eps^{2H+b} \int_0^\cdot \widetilde{\sigma}(Y^\eps_s) (\rho \D B_s + \overline{\rho} \D B^\perp_s)$, 
with speed $\eps^{4H+2b}$ and good rate function 
\begin{equation} \label{eq:grf_X_ST}
\II(\chi):= \inf \left\{ \widetilde{I}_5 (\varphi,\psi) : \varphi \cdot \psi=\chi, \ \psi\in\BV\right\}.
\end{equation}

We now prove that $\widetilde{X}^\eps$ and $\eps^{2H+b} \int_0^\cdot \widetilde{\sigma}(Y^\eps_s) (\rho \D B_s + \overline{\rho} \D B^\perp_s)$ are exponentially equivalent.

$$
\begin{aligned}
\PP\left(\left\| \widetilde{X}^\eps-\eps^{2H+b} \int_0^\cdot \widetilde{\sigma}(Y^\eps_s) \D W_s\right\|_\infty > \delta\right) 
&= \PP\left(\left\|-\frac{1}{2} \int_0^\cdot {(\eps^{H+\frac{1}{2}}\widetilde{\sigma}(Y^\eps_s))}^2 \D s\right\|_\infty > \delta\right)
= \PP\left(\left\|\int_0^\cdot (\widetilde{\sigma}(Y^\eps_s))^2 \D s \right\|_\infty > \frac{2\delta}{\eps^{2H+1}}\right), \\
& \le \PP\left(\left\|\int_0^\cdot |Y^\eps_s|^2 \D s \right\|_\infty > \frac{\delta}{C^2 \eps^{2H+1}}-1 \right),
\end{aligned}
$$
using the growth condition assumption $|\widetilde{\sigma}(x)| \le C(1+|x|)$ for $x \in \RR$.
Since $Y^\eps_s = \eps^b \E^{\beta \eps^2 s} \Theta + \eps^b \frac{\lambda}{\beta} (\E^{\beta \eps^2 s}-1) + \eps^{2H+b}\xi \left[ W^H_s +\beta \eps^2 \int_0^s W^H_u \E^{-\beta \eps^2 u} \D u \right]$
and $\beta<0$, 
we obtain for $s \in \Tt$,
$$
|Y^\eps_s|
\le \eps^b \E^{\beta \eps^2 s} |\Theta|
+ \eps^b \frac{\lambda}{\beta} \left(\E^{\beta \eps^2 s}-1\right) 
+ \xi \eps^{2H+b} |W^H_s|,
$$
so that
$$
|Y^\eps_s|^2
\le 2\left(\eps^{2b} \E^{2\beta \eps^2 s} \Theta^2 
+ 2\eps^{2b} {\left(\frac{\lambda}{\beta}\right)}^2 \left(\E^{2\beta \eps^2 s}+1\right) 
+ \xi^2 \eps^{4H+2b} |W^H_s|^2\right), 
$$
and hence
$\int_0^t |Y^\eps_s|^2 \D s 
\le \eps^{2b} t \Theta^2 +\eps^{2b} {\left(\frac{\lambda}{\beta}\right)}^2 \left(2t-\frac{1}{\beta\eps^2}\right) + \xi^2 \eps^{4H+2b} \int_0^t |W^H_s|^2 \D s$.
Using a similar argument to~\eqref{eq:bound_Yinfty}, used in the proof of Theorem~\ref{Th:tails_frac_X}, we have, for all $\eps \le \min \{ \tilde{\eps}, \bar{\eps} \}$,

\begin{equation*}
\PP\left(\left\|\int_0^\cdot |Y^\eps_s|^2 \D s \right\|_\infty > \frac{\delta}{C^2 \eps^{2\Hp}}-1 \right)
\le \frac{\E^{-M/\eps^{4H+2b}}}{1-\eta} 
+ \frac{\xi^2 \eps^{4H+2b}}{a} \frac{1}{2\Hp},
\end{equation*}
so that $\widetilde{X}^\eps$ and 
$\eps^{2H+b} \int_0^\cdot \widetilde{\sigma}(Y^\eps_s) (\rho \D B_s + \overline{\rho} \D B^\perp_s)$ 
are exponentially equivalent.

Finally, one can use the same arguments that in Appendix~\ref{Sec:Proof1}, replace $\beta$ by $\beta \eps^2$ and the speed $\eps^{2b}$ by $\eps^{4H+2b}$ and prove that $X^\eps$ and $\widetilde{X}^\eps$ are exponentially equivalent. Hence $X^\eps \sim \LDP (\eps^{4H+2b}, \II)$.

\subsection{Proof of Theorem~\ref{Th_rdm_new}}\label{sec:Proof-Th_rdm_new}
The proof is similar to that of~\cite[Theorem 2.9]{P07}, with some minor modifications.
One key ingredient in the proof of the theorem is an exponential equivalence 
between~$(\X^{\eps,\xx})_{\eps>0}$ and its limit system (as~$\eps$ tends to zero). In this proof, we denote $\X^\eps$ by $\X^{\eps,\xx}$  to stress the dependence on the starting point.
\begin{lemma}\label{Lem:main_est}
Under Assumption~\ref{ass:UniqueSol},
for each $R, \delta, \beta>0$, there exists $\gamma, \rho, \eps_0 >0$ 
such that
$$
\eps^2 \log \PP\left(\|\X^{\eps,\xx}-\Ss_{0}(\varphi,u)\|>\delta, \|\eps \W - \varphi\| \le \gamma\right) \le -R
$$
holds when $\eps \le \eps_0$
for all $u \in supp(\X_0)$ and all $\varphi\in \Hh$ satisfying $\Lambda^{\rho}(\varphi) \le \beta$, 
$\xx \in \Bb_\rho ((0,0))$.
\end{lemma}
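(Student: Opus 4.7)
The plan follows the Azencott method, as developed for the diffusion case with random coefficients by Mellouk~\cite{M00} and extended to include $\eps$-dependent drifts by Peithmann~\cite{P07}: a Girsanov shift transforms the problem into one where the driving noise is small, after which Bernstein-type martingale estimates combined with Gronwall's lemma deliver the bound. Concretely, I would introduce the equivalent probability $\widetilde{\PP}_\varphi$ defined by
$$
\frac{\D \widetilde{\PP}_\varphi}{\D \PP} = \exp\left(\frac{1}{\eps}\int_0^1 \dot{\varphi}_s \D \W_s - \frac{1}{2\eps^2}\|\dot{\varphi}\|_{L^2}^2\right),
$$
under which $\widetilde{\W}_t := \W_t - \varphi_t/\eps$ is a standard Brownian motion. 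A Cauchy--Schwarz-type argument with $\D\PP/\D\widetilde{\PP}_\varphi$, together with the bound $\|\dot\varphi\|_{L^2}^2\le 2\beta$ coming from $\Lambda^{\rho}(\varphi)\le\beta$, reduces the target estimate to an estimate under $\widetilde{\PP}_\varphi$ at the cost of an exponential factor of order $\E^{2\beta/\eps^2}$, which is absorbed into $R$ by increasing the required threshold accordingly.

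Under $\widetilde{\PP}_\varphi$ and on the event $\{\|\eps\W-\varphi\|\le\gamma\}=\{\|\eps\widetilde{\W}\|\le\gamma\}$, the process $\X^{\eps,\xx}$ satisfies
$$
\D \X^{\eps,\xx}_t = \bigl[b(\eps,\X^{\eps,\xx}_t,\X_0)+a(\X^{\eps,\xx}_t,\X_0)\dot{\varphi}_t\bigr]\D t + \eps a(\X^{\eps,\xx}_t,\X_0)\D\widetilde{\W}_t,
$$
which I would compare with the skeleton $\Ss_{0}(\varphi,u)_t=\int_0^t a(\Ss_{0}(\varphi,u)_s,u_s)\dot{\varphi}_s\D s$. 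Setting $Z^\eps_t:=\X^{\eps,\xx}_t-\Ss_{0}(\varphi,u)_t$ and using the Lipschitz and boundedness hypotheses (H1)--(H2), Gronwall's lemma yields
$$
\|Z^\eps\|_\infty \le C_\varphi \Bigl(|\xx|+\eps+|\X_0-u|+\Bigl\|\eps\int_0^\cdot a(\X^{\eps,\xx}_s,\X_0)\D\widetilde{\W}_s\Bigr\|_\infty\Bigr),
$$
where $C_\varphi$ depends only on $\|\dot\varphi\|_{L^1}$ (hence on $\beta$) and on the Lipschitz/boundedness constants from (H1)--(H2).

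I would then choose $\rho>0$ small enough that on the auxiliary event $\{|\X_0-u|\le\rho\}$, the contributions of $|\xx|$ and $|\X_0-u|$ are strictly less than $\delta/2$ uniformly for $\varphi$ in the compact sublevel set $\{\Lambda^\rho\le\beta\}$; the vanishing drift $b(\eps,\cdot,\cdot)$ contributes a term of order $\eps$, negligible for $\eps\le\eps_0$ small enough. The remaining stochastic integral is handled by Bernstein's exponential martingale inequality (see~\cite[Proposition A.1]{P07}), which yields a bound of the form $\exp(-c\delta^2/\eps^2)$ depending only on $\|a\|_\infty$. Combining these ingredients with the Girsanov cost from Step~1 gives the required $\E^{-R/\eps^2}$ bound.

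The main obstacle is the coupling between the random parameter $\X_0$ entering the coefficients and the deterministic skeleton parameter $u\in\supp(\X_0)$: the term $|\X_0-u|$ in the Gronwall estimate forces the conditioning $\{|\X_0-u|\le\rho\}$, whose complement must be controlled separately. Since~$\Theta$ (and hence $\X_0$) has compact support, this is resolved by a finite covering argument, yielding uniform constants over $u\in\supp(\X_0)$. A secondary difficulty is that Mellouk's original framework~\cite{M00} does not allow for an $\eps$-dependent drift; this is remedied by following the extension in~\cite[\S 2.2.1]{P07}, exploiting the uniform convergence $b(\eps,\cdot,\cdot)\to 0$ as $\eps\downarrow 0$.
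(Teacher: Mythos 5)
Your overall framework (Girsanov shift, Gronwall comparison with the controlled ODE, exponential martingale estimates) is the same Azencott--Mellouk--Peithmann strategy the paper follows, but the decisive step is wrong as you state it, and this is not a cosmetic issue.

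The lemma requires that for \emph{every} $R>0$ one can choose $\gamma,\rho,\eps_0$ so that the probability is bounded by $\exp(-R/\eps^2)$; in other words the bound must be super-exponential. After your Gronwall reduction, the quantity that has to be controlled is $\PP\bigl(\|\eps\int_0^\cdot a(\X^{\eps,\xx}_s,\X_0)\,\D\widetilde{\W}_s\|_\infty>\delta/(2C_\varphi),\ \|\eps\widetilde{\W}\|\le\gamma\bigr)$. Applying Bernstein (or Borell--TIS) to this martingale, as you propose, bounds only the unconditional probability $\PP(\|\eps\int a\,\D\widetilde{\W}\|_\infty>\delta')$ by $\exp(-c\delta'^2/\eps^2)$ with a \emph{fixed} constant $c$ depending on $\|a\|_\infty$ and $C_\varphi$ alone. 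For a given $\delta$ (and hence $\delta'$) this is a fixed exponential rate that does not increase as you shrink $\gamma$, so it cannot beat an arbitrary $R$. You mention the event $\{\|\eps\widetilde{\W}\|\le\gamma\}$ in passing, but you never actually use it in the stochastic-integral estimate, which is exactly where it must enter. The paper, following Peithmann, obtains the super-exponential decay through the discretisation argument (its steps (i)--(iii)): one freezes the coefficient $a(\overline{\Y}^{\eps,n}_s,\X_0)$ on a mesh of size $1/n$, bounds the frozen Riemann sum by $Cn\gamma$ on $\{\|\eps\widetilde\W\|\le\gamma\}$, controls the discretisation error by an exponential martingale inequality whose constant improves as $n\uparrow\infty$ (because the modulus of continuity of $\overline{\Y}^\eps$ shrinks), and then sends $n\uparrow\infty$ followed by $\gamma\downarrow 0$. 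Without this two-parameter limit you do not obtain, for fixed $\delta$, a rate that diverges to $-\infty$.

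Two secondary points. First, your claim that the Girsanov cost is ``absorbed into $R$'' is fine, but note that $\Lambda^\rho(\varphi)\le\beta$ gives $\|(\Psi^\rho)^{-1}\dot\varphi\|_{L^2}^2\le 2\beta$, so $\|\dot\varphi\|_{L^2}^2$ is bounded by a constant multiple of $\beta$ depending on $\rho$, not by $2\beta$ itself; this only changes constants. Second, your treatment of the $|\X_0-u|$ term by a ``finite covering argument'' is not an argument: a cover of the compact support gives uniformity of the Gronwall/Lipschitz constants over $u$, but it does nothing to make $\PP(|\X_0-u|>\rho)$ small for a fixed $u$, which is what your conditioning would require. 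In the paper's proof this discrepancy does not appear because (following Peithmann) one compares the noisy flow $\overline{\Y}^\eps$ to the ODE $\dot\zeta_t=c_0(t,\zeta_t,Y_t)$ driven by the \emph{same} random parameter, rather than to the deterministic skeleton $\Ss_0(\varphi,u)$; the passage from this comparison to the lemma's phrasing is handled when the lemma is invoked in the upper/lower bounds of Theorem~\ref{Th_rdm_new}, where $u$ is chosen in conjunction with the covering.
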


\begin{proof}
First, introduce the Radon-Nikodym derivative
$\mathrm{D}^\eps(\varphi):=\exp \left\{\frac{1}{\eps} \int_{\Tt} \dot{\varphi}_s \D \W_s - \frac{1}{2\eps^{2}} \int_{\Tt} {\|\dot{\varphi}_s\|}^2 \D s\right\}$.
Following~\cite{P07}, consider the family ${(\overline{\Y}^\eps)}_{\eps >0}$ of (unique strong ) solutions of 
$\D \overline{\Y}^\eps_t = \ck(\eps,t,\overline{\Y}^\eps_t,\X_0) \D t + \eps \ak(\overline{\Y}^\eps_t,\X_0) \D \mathrm{B}^\eps_t$, for $t >0$, 
with initial condition $\overline{\Y}^\eps_0 = (0,0)' \in \RR^2$.
We denote $\ck(\eps,t, \xx, \yy):= \bk(\eps,\xx,\yy) + \ak(\xx,\yy) \dot{f}_t$, for $\xx, \yy \in \RR^2$, $t \in \Tt$ and $\eps >0$, and $\mathrm{B}^\eps_t := W_t - \eps^{-1} \dot{f}_t$, for $t \in \Tt$.
Peithmann's assumptions are here updated as:
\begin{itemize}
\item[(i)] $\ck:(0,\infty)\times\Tt\times\RR^2\times\RR^2 \rightarrow \RR^2$ converges to $\ck_0:\Tt\times\RR^2\times\RR^2 \rightarrow \RR^2$ in the sense that 
$$
\lim_{\eps \downarrow 0} \int_{\Tt} \sup_{\xx\in \RR^2} \|\ck(\eps,t,\xx,\yy) - \ck_0(t,\xx,\yy)\|\D t = 0,
\qquad\text{for all }\yy\in \RR^2.
$$
In particular, note that $\ck_0(t, \xx, \yy) = \ak(\xx,\yy)\dot{f}_t$;
\item[(ii)] there exists $\varrho\in \LTt$ such that for all $\xx, \yy\in \RR^2$, 
$\|\ck(\eps,t,\xx,\yy)\| + \|\ck_0(t,\xx,\yy)\| \le \varrho(t)$, for $t \in \Tt$;
\item[(iii)] there exists $\kappa \in \mathrm{L}^1(\Tt)$ such that for all $\yy, \xx_1,\xx_2 \in \RR^2$, 
$\|\ck_0(t,\xx_1,\yy) - \ck_0(t,\xx_2,\yy)\| \le \kappa (t) \|\xx_1 - \xx_2\|$ on~$\Tt$.
\end{itemize}
Partitioning $\Tt$ into $\{t_k := \frac{kT}{n}\}_{k=0,\ldots,n}$, 
set $\overline{\Y}^{\eps,n}_t := \overline{\Y}^\eps_{t_k}$ for $t_k \le t < t_{k+1}$, 
and Peithmann's arguments yield
\begin{enumerate}[(i)]
\item for any $\delta >0$, $\lim\limits_{n \uparrow \infty} \limsup_{\eps \downarrow 0} \eps^2 \log \PP (\|\overline{Y}^\eps- \overline{\Y}^{\eps,n}\| >\delta)=-\infty$ 
uniformly with respect to $\yy_0 \in \RR^2$;
\item given $M^\eps_t := \eps \int_{\Tt} \ak(\overline{\Y}^\eps_s,\X_0)\D \W_s$ and 
$M^{\eps,n}_t := \eps \int_{\Tt} \ak(\overline{\Y}^{\eps,n}_s,\X_0)\D \W_s$, for all $\delta >0$,
$$ 
\lim_{\beta \downarrow 0} \limsup_{\eps \downarrow 0} \eps^2 \log 
\PP\left(\|M^\eps-M^{\eps,n}\| > \delta, \|\Y^\eps-Y^{\eps,n}\|\le \beta\right) = -\infty, 
$$
uniformly with respect to $n \in \mathbb{N}$, $y_0 \in \RR^2$;
\item for all $\delta >0$, 
$\lim_{\gamma \downarrow 0} \limsup_{\eps \downarrow 0} \eps^2 \log \PP(\|M^\eps\|>\delta, \|\eps\W\| \le \gamma)=-\infty$;
\item let $\zeta$ denote the solution of the ordinary differential equation 
$\dot{\zeta}_t = \ck_0(t,\zeta_t,Y_t)$ starting from $\zeta_0=\yy$. 
For all $R>0$, $\delta >0$, there exists $\gamma, \rho,\eps_0 >0$ 
such that $\PP(\left\{ \|\Y^\eps,\zeta\|>\delta \right\} \cap \left\{ \|\eps \W\| \le \gamma \right\})
 \le \exp \left( -R / \eps^2 \right)$, 
 for all $\yy_0 \in\RR^2$, $\yy\in \Bb_\rho (\yy_0)$ and $\eps \le \eps_0$.
\end{enumerate}
The theorem follows from Girsanov's theorem with Radon-Nikodym derivative~$\mathrm{D}^\eps(f)$.
\end{proof}
We start with the lower bound. 
For any open subset~$G$ of~$\Cc(\Tt,\RR^2)$, let $\eta >0$ and choose $\psi\in G$ 
such that $I^\alpha(\psi) \le \inf_{\psi\in G} I^\alpha(\psi) + \eta$. 
Then, let $u \in \supp(\X_0)$ and $\varphi \in \Hh$ such that $\Ss_0(\varphi,u)=\psi$ 
and $\Lambda^{\rho}(\varphi)=I^\alpha(\psi)$. 
Let $\delta >0$ such that $\Bb_\delta(\psi) \subset G$. 
Then, for each $\gamma >0$, $\xx \in \RR^2$, 
\begin{align*}
\PP(\X^{\eps,\xx} \in G)
 \ge \PP(\|\X^{\eps,\xx}-\psi\| \le \delta)
 \ge \PP(\|\eps \W - \varphi\| \le \gamma) - \PP(\|\X^{\eps,\xx}-\psi\|>\delta, \|\eps \W - \varphi\| \le \gamma).
\end{align*}
Schilder's Theorem~\cite[Theorem 5.2.3]{DZ}, then yields the lower bound
$$
\liminf_{\eps \downarrow 0} \eps^2 \PP(\|\eps \W - \varphi\| \le \gamma) \ge -\Lambda^{\rho}(\varphi)
 = -I^\alpha(\psi) \ge - \inf_{\psi \in G} I^\alpha(\psi) - \eta.
$$
Then, we bound the second probability from above using Lemma~\ref{Lem:main_est}: 
fix $\beta \ge \Lambda^{\rho}(\varphi)$ and $R > \inf_{\psi \in G} I^\alpha(\psi) + \eta$, 
and find $\gamma, \rho, \eps_0 >0$ such that, for $\xx \in \Bb_\rho ((0,0)')$, $\eps \le \eps_0$,
the bound 
$\eps^2 \log \PP(\|\X^{\eps,\xx} - \psi\| > \delta, \|\eps \W - \varphi\| \gamma) \le -R$ holds.
These two bounds then imply the required lower bound:
$$
\lim_{\eps \downarrow 0} \inf_{\rho \downarrow 0} \eps^2 \log \inf_{\xx \in \Bb_\rho ((0,0)')} \PP\left(\X^{\eps,\xx} \in G\right)
 \ge \min \left\{ -R, -\inf_{\psi\in G} I^\alpha(\psi) - \eta\right\}
  = -\inf_{\psi \in G} I^\alpha(\psi) - \eta.
$$

We now prove the upper bound.
For any closed set~$F$ of~$\Cc(\Tt, \RR^2)$, take $\beta \in (0,\inf_{\psi\in G} I^\alpha(\psi))$ and 
$R > \beta$. 
Let $u \in \supp(\X_0)$ and $\psi\in \Hh$ with $I^\alpha(\psi) \le \beta$. 
We find $\delta >0$ such that $\Bb_{\delta}(\psi) \cap F = \emptyset$ 
and $\varphi \in \{\Lambda^{\rho} \le \beta \}$ such that $\Ss_0 (\varphi,u)=\psi$. 
Using Lemma~\ref{Lem:main_est}, there exists $\gamma, \rho, \eps_0 >0$ such that
for $\xx \in \Bb_\rho ((0,0)')$, $\eps \le \eps_0$,
$$
\eps^2 \log \PP(\|\X^{\eps,\xx} - \psi\| > \delta, \|\eps \W - \varphi\| \le \gamma) \le -R.
$$
The set $\left\{ \Bb_\gamma (\varphi) : \psi \in \Hh, I^\alpha(\psi) \le \beta \right\}$ 
forms a cover of the compact set
$\{\Lambda^{\rho}(\varphi) \le \beta \}$, 
so that we can extract a finite sub-cover $\left\{ \Bb_{\gamma_i} (\varphi_i)\right\}_{i=1, \ldots, k}$ 
and set $A := \cup_{i=1}^{k} \Bb_{\gamma_i} (f_i)$ and $\psi_i := \Ss_0 (\varphi_i,r)$. 
For any $i=1,\ldots,k$, there exist $\delta_i, \rho_i, \eps_i>0$ such that, 
for any $\xx \in \Bb_{\rho_i}((0,0)')$ and $\eps \le \eps_i$, 
$$
\eps^2 \log \PP(\|\X^{\eps,\xx} - \psi_i\| > \delta, \|\eps \W - \varphi_i\| \le \gamma) \le -R.
$$
Set $\eps_0 := \min \left\{ \eps_1, \ldots, \eps_k \right\}$,
$\rho_0 := \min \left\{ \rho_1, \ldots, \rho_k \right\}$, 
take $\eps \le \eps_0$ and $\xx \in \Bb_{\rho_0}((0,0)')$. 
Since $F \cap \Bb_{\delta_i}(\psi_i) = \emptyset$ for every $i=1,\ldots,k$, we obtain
\begin{align*}
\PP (\X^{\eps,\xx} \in F) 
&\le \PP(\X^{\eps,\xx} \in F, \eps \W \in A) + \PP(\eps \W \in A^c)\\
&\le \sum_{i=1}^{k} \PP(\|\X^{\eps,\xx} - g_i\|> \delta_i, \|\eps \W - \varphi_i\| \le \gamma_i) + \PP(\eps \W \in A^c)
\le k\exp\left(-\frac{R}{\eps^2}\right) + \exp\left(-\frac{\beta}{\eps^2}\right),
\end{align*}
since $I^\alpha(\psi_i) \le \beta$ by definition of~$\beta$. 
Finally, since $R > \beta$, the theorem follows from the upper bound 
$$
\lim_{\eps \downarrow 0} \sup_{\rho \downarrow 0} \eps^2 \log \sup_{\xx \in \Bb_\rho (\X^\eps_0)} \PP\left(\X^{\eps,\xx} \in F\right)
 \le -\inf_{\psi\in F} I^\alpha(\psi).
$$

\subsection{Proof of Corollary~\ref{Cor:ImpliedvolTails}}\label{sec:Proof_IVTails} 
This is a straightforward application of~\cite[Corollary 7.1]{GL14}.
Taking~$\eps^{-2b}$ to be~$k$ we have, from Theorem~\ref{Th:tails_frac_X}, 
that $\eps^{2b} X_t \sim \text{LDP} (\eps^{2b}, \widetilde{\Lambda})$ as $\eps$ goes to zero. Then,
$$
\lim_{k \uparrow \infty} \frac{1}{k} \log\PP(X_t \ge k) = - \inf_{y \ge 1} \widetilde{\Lambda}(\phi)|_{\phi_t=y}.
$$ 
Similarly, in Black-Scholes, 
$\displaystyle \lim_{k \uparrow \infty} \frac{\Sigma^2_t(k)}{k^2} \log\PP (X_t \ge k)
= - \frac{1}{2t }$, and the proof follows from~\cite[Corollary 7.1]{GL14}.

\subsection{Proof of Corollary~\ref{Cor:ImpliedvolST}}\label{sec:Proof_IVST}
This is a straightforward application of~\cite[Corollary 7.1]{GL14}.
Taking $\eps^2$ to be~$t$ and $b_H := H+b-1/2$, from Theorem~\ref{Th:LDP_ST_X}, $t^{b_H} X \sim \text{LDP} (t^{2H+b}, \II)$, as~$t$ goes to zero,
$$
\lim_{t \downarrow 0} t^{2H+b} \log\PP\left(t^{b_H}X \ge k\right) = - \inf_{y \ge k} \II(\phi)|_{\phi_1=y}.
$$ 
Similarly, in the Black-Scholes model, we obtain
$$
\lim_{t \downarrow 0} \frac{t \Sigma^2_t(t^{-b_H}k)}{t^{-2b_H}} \log\PP\left(t^{b_H} X \ge k\right)
= \lim_{t \downarrow 0} \frac{t \Sigma^2_t(t^{-b_H}k)}{t^{-2b_H}} \log\PP\left(X \ge t^{-b_H} k\right)
= - \frac{k^2}{2},
$$
and the result follows from~\cite[Corollary 7.1]{GL14}.

\subsection{Proof of Proposition~\ref{prop:RSP}}\label{sec:Proof_prop_RSP}
For any $\eps >0$, the pathwise rescaled process $\X^{\eps, \zeta}$ satisfies~\eqref{eq:ScalingTails}.
The proof of the proposition relies on the (more general) theorem proved by Millet, Nualart and Sanz~\cite{MNS},
recalled in Section~\ref{app:MNS}, and whose validity is guaranteed by Assumption~\ref{ass:MNS}.
Note first that, from standard large deviations considerations (and in particular contraction mappings), 
the process~$\X^{\eps, 0}$ satisfies a large deviations principle 
with good rate function~$I$ given in Theorem~\ref{thm:MNSLDP}.
Recall that by construction
$\bk(\eps,\cdot):\RR^2\to\RR^2$ satisfies 
$\lim_{\eps\downarrow 0}\eps \widetilde{\bk}(\eps, \xx/\eps)=\bk(\xx)$ uniformly as $\eps$ tends to zero. 
Therefore Theorem~\ref{thm:MNSLDP} yields a large deviations principle for the sequence ${(\X^{\eps, \zeta})}_{\eps \ge 0}$
as~$\eps$ tends to zero, 
with good rate function~$I$ and speed~$\eps^2$.
In particular, for
$A := \left\{\psi\in \Cc(\Tt,\RR^n), \forall \xx \in \RR^n, \psi(1, \xx) \ge 1 \right\}$, 
we have
\begin{equation*}
\left\{
\begin{array}{lcccccc}
\displaystyle- \inf_{\psi\in \mathring{A}} I(\psi) & \le
 & \displaystyle \liminf_{\eps \downarrow 0} \eps^2 \log \PP(\X^{\eps, \zeta}_t \ge 1)
 & \le & \displaystyle \limsup_{\eps \downarrow 0} \eps^2 \log \PP(\X^{\eps, \zeta}_t \ge 1)
 & \le & \displaystyle - \inf_{\psi\in \bar{A}} I(\psi), \\
- \displaystyle \inf_{\psi\in \mathring{A}} I(\psi) & \le 
& \displaystyle \liminf_{\eps \downarrow 0} \eps^2 \log \PP(\X^{\eps, \xx_0}_t \ge 1) 
& \le & \displaystyle \limsup_{\eps \downarrow 0} \eps^2 \log \PP(\X^{\eps, \xx_0}_t \ge 1) 
& \le & \displaystyle - \inf_{\psi\in \bar{A}} I(\psi).
\end{array}
\right.
\end{equation*}
Since~$\Lambda$ is continuous on~$\Hh$, it is upper semi-continuous on~$A$,
and so is the good rate function~$I$ by~\cite[Lemma 2.41]{AB06}. 
As a good rate function, it is also lower semi-continuous, and hence continuous, on~$A$.
Translating the two sets of inequalities above in terms of~$\eps \X_t^{\zeta}$ and~$\eps \X_t^{\xx_0}$,
and using the continuity of~$I$ proves the proposition.


\end{document}